\RequirePackage{fix-cm}
\documentclass[smallextended]{svjour3}  
\smartqed 
\usepackage[T1]{fontenc}
\usepackage[utf8]{inputenc}
\usepackage{amssymb}
\usepackage{graphicx}
\usepackage{xcolor}
\usepackage{amsmath}
\usepackage{pifont}
\usepackage{enumitem}
\usepackage{subfigure}
\usepackage{geometry}
\usepackage{caption}
\usepackage{longtable}
\usepackage{array}
\usepackage{comment}
\usepackage{url}
\usepackage[normalem]{ulem}
\usepackage{hyperref}
\usepackage[labelfont=bf,labelsep=space]{caption}

\begin{document}
\title{Topological Characteristics for the Analysis of Vector Fields}
\subtitle{New stable characteristics for discrete and continuous dynamical systems}

\author{Marta Marszewska \and 
Justyna Signerska \and 
Pawe{\l} D{\l}otko}

\institute{
           Corresponding author: Marta Marszewska \at
              Dioscuri Center in Topological Data Analysis, Institute of Mathematics, Polish Academy of Sciences, 8 \'Sniadeckich Street, 00-656 Warsaw, Poland \& \\
              Gda\'nsk University of Technology, 11/12 Gabriela Narutowicza Street, 80-233 Gda\'nsk
              Poland \\
              \email{marta.marszewska@pg.edu.pl}
              \and
           Justyna Signerska \at
              Gda\'nsk University of Technology, Faculty of Applied Physics and Mathematics, 11/12 Gabriela Narutowicza Street, 80-233 Gda\'nsk
              Poland \& \\
              Dioscuri Center in Topological Data Analysis, Institute of Mathematics, Polish Academy of Sciences, 8 \'Sniadeckich Street, 00-656 Warsaw, Poland
              \and
              Pawe{\l} D{\l}otko \at
             Centre of Trustfully Artificial Inteligence for Life Sciences, Warsaw University\\
           \and
}

\date{Received: date / Accepted: date}

\maketitle

\begin{abstract}
The qualitative analysis of nonlinear dynamical systems relies on identifying geometric and topological structures that govern phase-space organization. We develop a computational framework based on novel topological and geometric descriptors of vector fields that enables robust comparison of dynamical regimes from both analytical models and sampled data.
The framework comprises the Begin--End Point Embedding (BEPE), Density of Directions (DOD) and Euler characteristic--based summaries, including Euler Characteristic Curve (ECC) and Euler Characteristic Profile (ECP). BEPE captures local variations in vector field, while DoD characterizes the global organization of vector directions. ECCs and ECPs provide filtration-based topological summaries of vector-field structure, with ECPs extending these summaries to multiparameter settings. Together, these descriptors encode complementary geometric and topological information about flow organization. We formulate both continuous and sampled versions of the descriptors and establish their robustness with respect to perturbations, finite sampling, and selected transformations.
The effectiveness of the proposed approach is demonstrated on a broad collection of benchmark systems, including the Hopf bifurcation, the FitzHugh–Nagumo model and the Lorenz system. Further experiments on the discrete Hénon map demonstrate the applicability of the framework to displacement fields generated by discrete dynamical systems, while experiments on high-dimensional turbulent flows assess its scalability beyond low-dimensional phase spaces. 
Comparisons with classical approaches, including Conley-index-based methods and $L_p$-type metrics, show that the proposed descriptors provide complementary structural information while offering favorable robustness, interpretability and computational efficiency, establishing a practical connection between dynamical systems theory and topological data analysis.

\keywords{sampled dynamical systems \and classification of vector fields \and topological features of data  \and Euler Characteristic Profile }

\subclass{37M20 - Computational methods for bifurcation problems in dynamical systems \and 37G35  	Dynamical aspects of attractors and their bifurcations \and 55N31  	Persistent homology and applications, topological data analysis \and 68T09  	Computational aspects of data analysis and big data}
\end{abstract}

\section{Introduction}
\label{sec:introduction}
Qualitative analysis of dynamical systems, and in particular of systems exhibiting chaotic behavior, remains a central problem in mathematics, physics, and applied sciences~\cite{katok1997introduction,strogatz1994}. Classical tools of dynamical systems theory, such as Lyapunov exponents, metric and topological entropy~\cite{katok1980entropy}, or bifurcation analysis, provide quantitative insight into stability, sensitivity to initial conditions, and long-term evolution of trajectories. Despite their effectiveness, these methods often rely on strong regularity assumptions, require long and accurate trajectories, and may exhibit limited robustness under noise or finite sampling. These limitations become especially pronounced in high-dimensional systems or when the dynamics is inferred from empirical data.

Topological approaches offer a complementary qualitative perspective by focusing on global properties of dynamics that are invariant under continuous deformations ~\cite{carlsson2009topology,ghrist2014,zomorodian2005}. One of the most influential frameworks in this direction is the Conley index theory, which associates algebraic invariants to isolated invariant sets of a dynamical system~\cite{conley1978isolated}. However, practical computation of the Conley index is often challenging, as it requires the construction of isolating neighborhoods and index pairs, a task that becomes difficult in large-scale or data-driven settings~\cite{mischaikow2002conley}.

In recent years, Topological Data Analysis (TDA) has emerged as a powerful methodology for extracting robust topological signatures from complex datasets~\cite{edelsbrunner2010}. The most established tool within TDA is persistent homology (PH), which captures the evolution of homological features, such as connected components, cycles, and voids, across a filtration induced by a scalar parameter~\cite{edelsbrunner2008persistent}. Persistent homology has found numerous applications in the analysis of time series and dynamical systems \cite{khasawneh2016}. Nevertheless, its computational cost, limited scalability, and the inherent difficulties of extending the theory to multiparameter filtrations (multifiltration) significantly restrict its applicability to high-dimensional vector fields and large datasets.

These challenges motivate the search for alternative topological descriptors for dynamical systems that are computationally lightweight, stable under perturbations, sensitive to qualitative dynamical structure, and applicable both in continuous and sampled settings. Rather than tracking individual trajectories, such descriptors should provide global summaries of the geometry and organization of the vector field itself. An alternative class of descriptors is based on the Euler characteristic, a classical topological invariant that is computationally lightweight and well suited for large-scale problems. In recent work, D\l{}otko \& Gurnari ~\cite{dlotko2023ecc} introduced Euler Characteristic Curves (ECCs) for one-parameter filtrations and Euler Characteristic Profiles (ECPs) for multiparameter filtrations, demonstrating that these invariants exhibit strong stability properties while avoiding many of the computational challenges associated with PH. Importantly, ECCs and ECPs extend naturally to multiparameter settings, where a complete persistence theory is still lacking.

When applied to filtrations derived from vector fields rather than scalar fields alone, Euler characteristic–based invariants yield expressive signatures that vary smoothly across parameter space and respond sensitively to qualitative changes in the dynamics, including bifurcations and transitions between dynamical regimes.

In the present article, we develop a framework for the qualitative analysis and comparison of vector fields based on topological and geometric descriptors. Our main emphasis is on Euler characteristic--based constructions, in particular Euler characteristic curves and Euler characteristic profiles, which provide computable and stable summaries of filtrations derived from the underlying field. At the same time, we also introduce complementary characteristics, such as the begin--end point embedding and densities of directions, which capture additional geometric information and may be useful in applications where different aspects of the field are of primary interest.

Throughout the manuscript, we assume that the phase space $X$ is a connected compact subset of $\mathbb{R}^n$. This assumption is only mildly restrictive and is adopted mainly to simplify the exposition. For each point $x \in X$, we consider an autonomous vector field $\dot{x} = f(x)$. Depending on the available information, we work either with the full vector field in the continuous setting or with a finite sampled representation given by pairs $(x,f(x))$. This dual perspective allows us to treat both analytically defined systems and data-driven situations arising from simulations, discretizations, or empirical measurements. Even in the continuous setting, the proposed constructions are intended to remain compatible with sampled representations, reflecting the practical limitations of numerical and experimental work.

A central goal of the paper is to show that Euler characteristic--based descriptors provide an effective tool for detecting and comparing qualitative features of dynamics, including invariant structures, bifurcations, and transitions to more complex behavior. At the same time, the other descriptors introduced here offer additional and complementary ways of encoding the organization of vector fields. We compare these approaches with more classical methods, including the Conley index and $L_p$-type metrics, and highlight in particular the advantages of Euler characteristic curves and profiles in terms of robustness, interpretability, and computational efficiency.

The remainder of the paper is organized as follows. We start with recalling a couple of preliminary definitions. Section \ref{sec:distamces} reviews existing approaches to the comparison of dynamical systems, including pointwise and optimal transport distances between vector fields, invariant-measure-based methods, and topological techniques such as the Conley index. Section \ref{sec:new_met} introduces the proposed descriptors for vector fields, namely the Density of Directions (DoD), the Begin--End Point Embedding (BEPE), and Euler Characteristic Curves and Profiles (ECCs and ECPs) and develop the theoretical foundations of the proposed methodology by introducing the continuous Euler Characteristic Profile, studying its approximation and providing a geometric characterization for linear maps. Section \ref{sec:ECP_linear} presents numerical experiments on low-dimensional vector fields: investigates the topological signatures of closed orbits and linear two-dimensional systems with stationary points. Theoretical properties of ECPs, including their behaviour under dynamical equivalence and related transformations, are analysed in Section \ref{sec:prop_ECP}. Sections \ref{sec:ECP_nonlinear} - \ref{sec:TDS} demonstrate the applicability of the proposed methodology to nonlinear and higher-dimensional dynamical systems, including the Hopf bifurcation, the FitzHugh--Nagumo model, the Lorenz system, the Hénon map, and high-dimensional turbulent flow data from the Johns Hopkins Turbulence Databases. Finally, the paper concludes with a discussion of the main findings, limitations and directions for future research. Optimized algorithms for ECP computation and visualization are presented in Appendices A and B.

\subsection{Motivation and Background}
\label{sec:ideas}
The study of dynamical systems hinges on the ability to extract informative invariants that capture the long-term behavior of trajectories, the organization of phase space, and the qualitative changes induced by parameter variation. Classical invariants-such as Lyapunov exponents or the Conley index provide deep insight but often come with significant limitations. Many of them require explicit knowledge of trajectories, depend sensitively on numerical integration schemes, or are difficult to compute robustly from sampled data. Furthermore, they typically focus on specific subsets of the dynamics (e.g.\ isolated invariant sets) rather than providing a global description of the vector field.

Recent progress in TDA suggests a complementary perspective. Instead of tracking individual trajectories, one may analyses the geometric and topological organization of the vector field itself. From this viewpoint, a vector field can be studied through the structure of its direction map, the behavior of the flow under perturbation, or the organization of its integral curves. However, many topological invariants traditionally used in TDA (most notably persistent homology) are computationally expensive or difficult to scale in settings where the domain is high-dimensional or must be sampled densely in order to represent the flow reliably.

This leads us to ask what properties a useful topological descriptor for vector fields should possess.
This motivates the search for alternative topological descriptors that are:
\begin{enumerate}
    \item computationally efficient, enabling analysis on dense grids or large samples;
    \item stable under perturbations, so that small changes in the vector field do not drastically change the invariant;
    \item sensitive to dynamical structure, capable of detecting bifurcations, coherent regions, or directional organization in the flow; 
    \item applicable in both continuous and sampled settings.
\end{enumerate}

While ECCs and ECPs play a central role in this work, they are not the only descriptors we consider. We focus on them in particular because they satisfy many of the requirements outlined above: they are computationally less demanding than persistent homology, scale well to high-resolution discretizations, and naturally extend to multiparameter settings. Moreover, when applied not merely to scalar-valued filtrations but to filtrations derived from vector fields and their associated geometric features, they yield expressive signatures that vary smoothly across parameter space and respond sensitively to qualitative changes in the dynamics.

At the same time, the ECC/ECP framework is complemented here by additional constructions, such as begin--end point embeddings and densities of directions, which capture other aspects of the organization of vector fields. Together, these descriptors form a broader family of tools for qualitative analysis and comparison. Among them, Euler characteristic--based invariants provide the main topological backbone of the paper, while the remaining descriptors offer useful geometric and data-driven perspectives that may be advantageous in specific applications.

The key idea of this work is therefore to use the ECC/ECP framework as a foundation for a broader class of descriptors for vector fields. These descriptors encode how selected topological and geometric features of the field evolve under parametrized filtrations or related constructions derived from the flow. In contrast to trajectory-based or index-based methods, this approach provides a more global perspective on the vector field, remains robust under sampling noise and perturbations, and is computationally feasible even for fine discretizations of phase space.

Our motivation is twofold. First, we aim to bridge ideas from dynamical systems theory and topological data analysis by showing that Euler characteristic--based invariants can serve as practical and theoretically grounded tools for the study of vector fields. Second, we show that these invariants, together with the complementary descriptors introduced in this paper, can be effectively applied in situations where classical approaches face limitations, including parameter continuation, bifurcation detection, and comparison of vector fields. The resulting framework is simple, scalable, and broadly applicable.

\section{Preliminaries}
\label{sec:Preliminaries}
In this section we collect the basic geometric and topological ingredients used throughout the paper. Our main goal is to construct descriptors of vector fields from filtrations derived either directly from the vector values or from associated quantities such as orientation, curl, divergence, or local eigenstructure. To do so, we first introduce the basic framework underlying the proposed constructions and then recall several standard quantities associated with vector fields that will serve as natural candidates for filtration functions in later sections.

Let us begin by introducing the basic definitions used in the sequel. For clarity of exposition, most of the constructions in this section are presented first in the two-dimensional setting, which is sufficient for many of the examples considered later. At the same time, the underlying ideas extend naturally to vector fields on subsets of $\mathbb{R}^n$, and whenever the higher-dimensional form is relevant we indicate it explicitly. Thus, the two-dimensional presentation should be understood primarily as a notational simplification rather than a restriction of scope.

\begin{definition}[Cubical complex in $\mathbb{R}^n$] \label{def:cubical_complex}
Let $\mathcal{G}=\{x^{(1)}_1,\ldots,x^{(1)}_{m_1}\}\times
\{x^{(2)}_1,\ldots,x^{(2)}_{m_2}\}\times\cdots\times
\{x^{(n)}_1,\ldots,x^{(n)}_{m_n}\}\subset\mathbb{R}^n$
denote a regular Cartesian grid, where each $\{x^{(k)}_i\}_{i=1}^{m_k}, k=1,\ldots,n,$ is a finite, uniformly spaced sequence with spacing $h > 0$. Each grid point in $\mathcal{G}$ corresponds to a closed axis-aligned $n$-dimensional cube of side length $ h $.
We define an adjacency structure on the set of such cubes as follows:
\begin{enumerate}
    \item Two cubes are said to be \emph{facet-adjacent} if their corresponding grid indices differ by exactly one in one coordinate and are equal in all remaining coordinates; that is, there exists exactly one index $k$ such that
    $|i_k-i_k'|=1,$  while $i_\ell=i_\ell'$ for all $\ell\neq k.$
    \item Two cubes are said to be \emph{vertex-adjacent} if their corresponding grid indices differ by exactly one in every coordinate; that is, $|i_k-i_k'|=1, k=1,\ldots,n.$
\end{enumerate}

This structure endows the grid $\mathcal{G}$ with the combinatorics of a \emph{cubical complex}, where $n$-cells correspond to cubes, $(n-1)$-cells to their shared facets, and lower-dimensional cells arise from the intersections of these facets.
\end{definition}

In our calculation we are focusing on $\mathbb{R}^2$, so \textit{facet-adjacent} is call \textit{edge-adjacent}.

Another essential object is a notion of filtration which will be formally defined in Subsection \ref{subsec:ECC intro}. However let us list here well-known quantities connected with vector fields such as orientation angle, curl, divergence and eigenvalues, which will be later used in constructions of filtration functions. 

\begin{definition}[Orientation Angle]
The \emph{orientation angle} of the vector field $F(x, y) = (f(x, y), g(x, y))$ is defined as
\[
\theta(x,y)=\arctan\!\left(\frac{g(x,y)}{f(x,y)}\right),
\]
whenever $f(x,y)\neq 0$, which gives the direction of $F$ in the plane, measured counterclockwise from the positive $x$-axis.    
\end{definition}

Note that for numerical computations, the orientation angle is evaluated using the two-argument inverse tangent function
\[
\theta(x,y)=\operatorname{atan2}\!\big(g(x,y),\,f(x,y)\big),
\]
which is mathematically equivalent to the above definition, while additionally providing the correct quadrant of the vector and remaining well-defined when $f(x,y)=0$. Consequently, $\operatorname{atan2}$ yields a unique orientation angle over the interval $(-\pi,\pi]$.

\begin{definition}[Curl] Let $F : \mathbb{R}^2 \to \mathbb{R}^2$ be a smooth vector field, i.e.,  $F(x, y) = (f(x, y),\, g(x, y))$.
The \emph{curl} of $F$ is a scalar function representing the out-of-plane component of the vorticity vector, which measures the infinitesimal rotation of the field around each point given by the formal determinant
\[
\nabla \times F = 
\begin{vmatrix}
\frac{\partial}{\partial x} & \frac{\partial}{\partial y}\\
f(x, y) & g(x, y)
\end{vmatrix}
=
\frac{\partial g}{\partial x}(x, y) - \frac{\partial f}{\partial y}(x, y).
\]
\end{definition}
Recall that curl is typically defined for $3$-dimensional vector fields. However, as in most of the examples we deal with planar vector fields, we adapted its $2$-dimensional version.  

\begin{definition} [Divergence]
Let $ F : \mathbb{R}^2 \to \mathbb{R}^2 $ be a smooth vector field, written as $F(x, y) = (f(x, y), g(x, y))$ The \emph{divergence} of $ F $ at a point $ (x,y) $ is defined as the scalar quantity
\[
\mathrm{div}\, F(x, y) = 
\frac{\partial f}{\partial x}(x, y) + 
\frac{\partial g}{\partial y}(x, y).
\]
\end{definition}
It quantifies the infinitesimal rate of volumetric expansion or contraction of the field around the point $(x, y)$. A positive value indicates a local source, while a negative value corresponds to a sink.

Generalizing the formula for $n$-dimensional space, the divergence is given by the formula
\[
\mathrm{div}\, F(x) = \sum_{i=1}^{n} \frac{\partial f_i}{\partial x_i}(x).
\]

\begin{definition}[Jacobian Matrix and Local Eigenstructure]
The \emph{Jacobian matrix} of the field $F$ at a point $(x, y)$ is
\[
J(x, y) =
\begin{bmatrix}
\dfrac{\partial f}{\partial x} & \dfrac{\partial f}{\partial y} \\[1em]
\dfrac{\partial g}{\partial x} & \dfrac{\partial g}{\partial y}
\end{bmatrix}.
\]
The eigenvalues of $J(x, y)$, denoted $\lambda_1, \lambda_2$, characterize the local linear behavior of $F$ near $(x, y)$:
\[
\lambda_{1,2} = 
\frac{\operatorname{tr}(J) \pm \sqrt{\operatorname{tr}(J)^2 - 4\,\det(J)}}{2},
\]
where $\operatorname{tr}(J) = \frac{\partial f}{\partial x} + \frac{\partial g}{\partial y}$ and 
$\det(J) = \frac{\partial f}{\partial x}\frac{\partial g}{\partial y} - \frac{\partial f}{\partial y}\frac{\partial g}{\partial x}$.
The real parts of $\lambda_{1,2}$ indicate local stretching or compression, whereas their imaginary parts describe rotational motion around the point.
\end{definition}

The above-mentioned definitions can be easily generalized to higher-dimensional examples.

\section{Distances and Comparisons Between Dynamical Systems}
\label{sec:distamces}
The problem of comparing dynamical systems arises naturally in many areas of mathematics, physics, and data analysis. When two systems are defined on the same phase space, it is often desirable to quantify how different their dynamics are, either in terms of their vector fields, their flows, or statistical properties of their trajectories.
One of the most direct approaches is to measure pointwise differences between vector fields using norms in suitable function spaces. In this framework vector fields are treated as elements of $L_p$ and distances are defined through the corresponding norms. While such metrics provide a simple quantitative comparison, they do not necessarily capture qualitative differences in the global organization of the dynamics.
Another class of methods compares the flows generated by vector fields. Since vector fields induce dynamical systems through ordinary differential equations, one can measure the discrepancy between trajectories starting from the same initial conditions. Such comparisons are closely related to stability theory and estimates for perturbations of differential equations.
A complementary perspective is provided by statistical descriptions of dynamical systems. In many cases the long-term behavior of trajectories is characterized by invariant probability measures. Comparing dynamical systems can therefore be formulated as comparing the corresponding invariant measures using probability metrics or information-theoretic divergences.
Finally, topological methods aim to capture qualitative structures of the dynamics that remain invariant under continuous perturbations. A prominent example is the Conley index, which associates algebraic topological invariants with isolated invariant sets of dynamical systems. These invariants allow one to detect equilibria, periodic orbits and more complicated recurrent dynamics.
Despite their usefulness, many of these approaches either rely on trajectory integration, depend on detailed knowledge of invariant measures, or focus on specific invariant structures. This motivates the search for alternative descriptors that are computationally efficient, robust under perturbations, and capable of capturing global geometric organization of vector fields.

\subsection{Pointwise and Lp distances Between Vector Fields}
If the dynamical systems are continuous-time systems governed by vector fields (e.g., \( \dot{x} = f(x) \) and \( \dot{x} = g(x) \)), where $f,g: X\to \mathbb{R}^n$ are continuous, a natural metric is the \textit{pointwise distance} between the vector fields:
\[
d_{\text{VF}}(f, g) = \sup_{x \in X} \| f(x) - g(x) \|
\]
where \( X \) is the phase space and \( \| \cdot \| \) is a norm (e.g., the Euclidean norm). This measures the maximum difference in the vector fields at any point in the phase space. It is also possible to define several standard measures between $f$ and $g$, such as \(L^1\) and \(L^2\).

The \textit{$L^1$ distance} between $f$ and $g$ is defined by
\[
d_{L^1}(f,g) = \int_X \| f(x) - g(x) \| \, d\mu(x),
\]
where $\mu$ is a $\sigma$-finite measure on $X$ and $f$ and $g$ are measurable with respect to $\mu$, e.g. a Lebesgue measure. This metric quantifies the total absolute deviation of the vector fields over the domain and is sensitive to differences on sets of positive measure, while isolated pointwise errors have limited impact.

For measurable functions $f,g : X \to \mathbb{R}^n$, the \textit{$L^2$ distance} is defined by
\[
d_{L^2}(f,g) = \left( \int_X \| f(x) - g(x) \|^{2} \, d\mu(x) \right)^{1/2}.
\]
This metric captures the root-mean-square deviation and assigns greater weight to large local differences due to the squaring.

In general, the choice of metric depends on the analytical framework and the type of stability or convergence one seeks to establish.  
For instance, convergence in \( L^\infty \) implies convergence in both \( L^1 \) and \( L^2 \) if $\mu(X) < \infty$), but not conversely.  
In dynamical systems theory, the \( L^\infty \) distance often provides the strongest control, ensuring that all trajectories experience uniformly small perturbations of the underlying vector field.

\subsection{Maximum Norm of the Flow Difference}
If we are interested in the flows \( \phi^f_t(x) \) and \( \phi^g_t(x) \) generated by the two systems, a natural metric is the \textit{supremum} of the pointwise difference between the flows over time:
\[
d_{\text{flow}}(f, g) = \sup_{x \in X, t \in [0,T]} \| \phi^f_t(x) - \phi^g_t(x) \|
\]
This measures the maximum distance between the trajectories of the two systems over time for points in the phase space.

\subsection{Wasserstein Distance (Optimal Transport)}

An alternative approach compares probability measures associated with dynamical systems using optimal transport distances such as the Wasserstein metric \cite{villani2008}. For instance, if two vector fields generate flows $\phi_t^f$ and $\phi_t^g$, one may compare invariant or otherwise dynamically relevant probability measures associated with these systems.

Let $(X,d)$ be a metric space, and let $\mu,\nu \in \mathcal P_p(X)$ be Borel probability measures with finite $p$-th moments. The $p$-Wasserstein distance between $\mu$ and $\nu$ is defined by
\[
W_p(\mu, \nu)
=
\left(
\inf_{\gamma \in \Gamma(\mu, \nu)}
\int_{X \times X} d(x,y)^p \, d\gamma(x,y)
\right)^{1/p}.
\]
Here $\Gamma(\mu,\nu)$ denotes the set of \emph{couplings} of $\mu$ and $\nu$, that is, probability measures on $X\times X$ whose marginals are $\mu$ and $\nu$. Intuitively, a coupling specifies how mass is transported from $\mu$ to $\nu$, while the Wasserstein distance measures the minimal transportation cost required to transform one distribution into the other.

The finite $p$-th moment assumption is needed to ensure that the above quantity is well defined and finite. In the dynamical setting, this approach also depends on the choice of measures being compared, since a given dynamical system may admit more than one invariant probability measure.

A closely related optimal-transport construction is used in topological data analysis to compare persistence diagrams (see~\cite{edelsbrunner2010}). Let $\mathrm{Dgm}_1$ and $\mathrm{Dgm}_2$ be two persistence diagrams, and let \[ \Delta = \{(b,d)\in\mathbb{R}^2:b=d\} \] denote the diagonal, considered with infinite multiplicity. The inclusion of the diagonal allows persistence features that do not have a corresponding feature in the other diagram to be matched to the diagonal. For $1\leq p<\infty$, the $p$-Wasserstein distance between the persistence diagrams is defined as \[ W_p(\mathrm{Dgm}_1,\mathrm{Dgm}_2) = \left( \inf_{\gamma} \sum_{x\in \mathrm{Dgm}_1\cup\Delta} \|x-\gamma(x)\|_\infty^p \right)^{1/p}, \] where the infimum is taken over all bijections \[ \gamma: \mathrm{Dgm}_1\cup\Delta \longrightarrow \mathrm{Dgm}_2\cup\Delta, \] and $\|\cdot\|_\infty$ denotes the $\ell_\infty$ norm on $\mathbb{R}^2$. The diagonal points may be matched to one another at zero cost, so that only the matching of off-diagonal features contributes to the distance. The $p$-Wasserstein distance aggregates the matching costs over all persistence features and is therefore sensitive to the combined contribution of multiple differences between the diagrams. The bottleneck distance is defined using the same class of matchings, but records only the largest matching cost: \[ d_B(\mathrm{Dgm}_1,\mathrm{Dgm}_2) = \inf_{\gamma} \sup_{x\in \mathrm{Dgm}_1\cup\Delta} \|x-\gamma(x)\|_\infty. \] Thus, unlike the $p$-Wasserstein distance, which aggregates the costs over all matched points, the bottleneck distance is determined by the worst matched pair and captures the largest topological discrepancy between the diagrams~\cite{cohen2007}. Equivalently, the bottleneck distance is the smallest value $\varepsilon$ for which every feature in one persistence diagram can be matched either to a feature in the other diagram or to the diagonal, with all matched pairs lying within $\varepsilon$ in the $\ell_\infty$ metric. Features with low persistence are represented by points close to the diagonal and may therefore be matched to it at low cost.

\subsection{Divergence of Invariant Measures}
Another approach to comparing dynamical systems focuses on the statistical properties of their trajectories. Let $\phi_t^f$ and $\phi_t^g$ denote the flows generated by the vector fields
$f$ and $g$ on a compact phase space $X$. 
A probability measure $\mu$ on $X$ is called \emph{invariant} for the flow $\phi_t$ if
$\mu(\phi_t^{-1}(A)) = \mu(A)$ for every measurable set $A \subset X$ and every $t$ \cite{walters}.
Invariant measures describe statistical properties of trajectories of the dynamical system. In general, however, a system may admit many invariant measures.
To obtain a physically meaningful comparison between systems, it is natural to consider \emph{physical (SRB) measures} \cite{young2002} . A probability measure $\mu$ is called a physical measure if for a set of initial conditions of positive Lebesgue measure the time averages along trajectories converge to spatial averages with respect to $\mu$, i.e.
\[
\lim_{T \to \infty} \frac{1}{T}\int_0^T \varphi(x(t))\,dt =
\int_X \varphi \, d\mu
\]
for every continuous observable $\varphi$.
If the systems generated by $f$ and $g$ admit physical measures $\mu_f$ and $\mu_g$, we can compare them using a metric on probability measures. For instance, the total variation distance is defined as
\[
d_{\mathrm{TV}}(\mu_f,\mu_g) =
\sup_{A \subset X} |\mu_f(A) - \mu_g(A)|.
\]
This provides a quantitative way to measure how differently the two systems distribute long–time trajectories over the phase space.

\subsection{The Conley Index}
The \textit{Conley index}, introduced by Charles Conley \cite{conley1978isolated}, is a topological invariant associated with isolated invariant sets of dynamical systems. It generalizes the Morse index to non-gradient flows and provides a powerful framework for detecting and classifying invariant dynamics such as equilibria, periodic orbits and chaotic sets.

Let $\phi: \mathbb{R} \times X \to X$ be a continuous flow on a locally compact metric space.  
A compact set $N \subset X$ is called an \emph{isolating neighborhood} if the maximal invariant set inside $N$,
\[
\operatorname{Inv}(N, \phi) = \{x \in N : \phi(t,x) \in N \ \forall t \in \mathbb{R}\},
\]
lies in the interior of $N$.

To define the Conley index one introduces an \emph{index pair} $(N,L)$ consisting of compact sets $L \subset N$ such that
\begin{enumerate}
    \item[(i)] $S = \operatorname{Inv}(N,\phi) \subset \operatorname{int}(N\setminus L)$,
    \item[(ii)] $L$ is positively invariant relative to $N$, and
    \item[(iii)] every trajectory that leaves $N$ must first pass through $L$.
\end{enumerate}
Intuitively, the set $L$ captures the part of the boundary of $N$ through which trajectories exit the neighborhood.

The \emph{homotopy Conley index} of the isolated invariant set 
$S = \operatorname{Inv}(N, \phi)$ is defined as the pointed homotopy type
\[
h(S) = [\,N / L\,],
\]
and the corresponding \emph{homology Conley index} as
\[
CH_\ast(S) = H_\ast(N, L).
\]
A fundamental property of the Conley index is the \emph{continuation property}: the index is invariant under continuous deformations of the dynamical system as long as the isolated invariant set persists.

Consequently, if the Conley index of an isolated invariant set changes when a system parameter varies, the invariant set cannot persist unchanged and a bifurcation must occur. This makes the Conley index a useful tool for detecting qualitative changes in dynamical systems.

However, computing the Conley index typically requires constructing isolating neighborhoods and index pairs, which can be challenging for high-dimensional or data-driven systems.

Recent work of Marian Mrozek and collaborators has developed a combinatorial counterpart of Conley theory that is particularly well suited for finite and data-derived models of dynamics. Starting from Forman's combinatorial vector fields, they established a rigorous bridge between combinatorial and classical vector-field dynamics by associating to a combinatorial vector field a multivalued dynamical system on the geometric realization of the underlying complex, and proving a correspondence of isolated invariant sets, Conley indices, Morse decompositions, and Conley--Morse graphs. This program was subsequently extended to the Conley--Morse--Forman theory of combinatorial multivector fields, first on Lefschetz complexes and later in a more general form on finite topological spaces. From the point of view of applications, these developments are especially relevant because they make it possible to compute a discrete Conley-type invariant directly from finite vector-field data, once such data have been converted into an appropriate combinatorial vector or multivector field. In the present paper, this combinatorial Conley-index framework will serve as one of the principal reference methods against which we compare our descriptors \cite{batko2020linking,desjardinscote2024finite,lipinski2023generalized,mrozek2017cmf}.

\section{New methods for measuring distances between dynamics}
\label{sec:new_met}
In many applications, information about a dynamical system is available not only through trajectories, but also through local measurements or numerical samples of the underlying vector field. In such settings, estimating directional information at many points of the phase space may be substantially easier, more robust, and more economical than reconstructing trajectories over long time intervals. This is especially relevant in data-driven problems, where trajectory information may be noisy, incomplete, or computationally expensive to obtain, whereas local vector information can often be inferred directly from simulations, measurements, or local regression procedures.

These considerations motivate the development of methods that compare dynamics directly at the level of vector fields. Ideally, such methods should not depend on the fields being defined on exactly the same phase space, on a common discretization, or in perfectly aligned coordinates. From this perspective, classical pointwise distances such as $L_p$ norms are often too restrictive, since they rely on a shared embedding and a reliable point-to-point correspondence between domains. The descriptors introduced below are designed to overcome this limitation by encoding vector fields through geometric and topological features that remain comparable across different representations. Several of them require only that the fields take values in spaces of the same dimension, while others can be adapted even more flexibly. In this way, they reduce or eliminate the need for explicit alignment while avoiding exclusive reliance on long trajectory data.

\subsection{Densities of Directions (DoD)}
\label{sec:DoD}

The \emph{Densities of Directions} (DoD) method associates to a vector field a distribution on the unit sphere that captures the global organization of its directions in the vector field independently of their magnitudes.

More precisely, let us consider the projection: 
\[
\nu : \mathbb{R}^n \setminus \{0\} \to S^{n-1},
\qquad
\nu(v)=\frac{v}{\|v\|},
\]
i.e. $\nu$ denotes the normalization map, which assigns to each nonzero vector its direction on the unit sphere. In the context of a vector field, this map forgets the magnitude of the vector and retains only its orientation. Such a reduction is natural when one is interested primarily in the directional organization of the dynamics rather than in local speed. The map $\nu$ is not defined at zero vectors, that is, at stationary points of the field; in practice, these points are excluded from the construction in this method. 

Given a vector field $f$ on a phase space $X \subset \mathbb{R}^n$, one may apply the normalization map pointwise and thus associate to the field a subset of the sphere $S^{n-1}$. In the sampled setting, for a finite set $S \subset X$, this yields the collection of directions
\[
\left\{ \frac{f(x)}{\|f(x)\|} : x \in S,\ f(x)\neq 0 \right\} \subset S^{n-1},
\]
which may be viewed as a discrete directional distribution of the field. In the continuous setting, the same construction produces a continuous family of directions indexed by points of the phase space, and therefore gives rise, at least formally, to a directional density on the sphere. In this way, both sampled and continuous vector fields can be transformed into objects on $S^{n-1}$ that summarize how directions are distributed throughout the domain.

To obtain an actual probability distribution on the sphere, one may normalize the projected sample by assigning equal weights to all sample points. Thus, for a finite sample
\[
S=\{x_1,\dots,x_N\}\subset X
\]
and a vector field $f$ satisfying $f(x_i)\neq 0$ for all $i$, we define the empirical directional measure
\[
\mu_f^S := \frac{1}{N}\sum_{i=1}^N \delta_{\nu(f(x_i))},
 \]
which is a probability measure on $S^{n-1}$.

In the continuous setting, let $\mu$ be a reference measure on $X$ (for instance, Lebesgue measure restricted to $X$). To avoid instability near stationary points, fix a threshold $\tau>0$ and consider the subset
\[
X_{\tau}:=\{x\in X:\|f(x)\|\ge \tau\}.
\]
The corresponding directional distribution on the sphere is then defined by normalizing the measure induced by the map $x\mapsto \nu(f(x))$. More precisely, for every measurable set $A\subset S^{n-1}$, we set
\[
\mu_f(A):=
\frac{\mu\big(\{x\in X_{\tau}:\nu(f(x))\in A\}\big)}
{\mu(X_{\tau})},
\qquad
\nu(v):=\frac{v}{\|v\|}.
\]
Thus, $\mu_f$ is a probability measure on $S^{n-1}$ obtained by recording the fraction of points in $X_{\tau}$ whose normalized vectors point into the set $A$.

In both cases, the total mass is equal to one, and the resulting objects arising from different vector fields may be compared using probability metrics on the sphere, such as the Wasserstein distance.

Under these assumptions, the projected sample on the sphere provides a stable and consistent approximation of the directional distribution of the vector field. The following lemma makes this precise in terms of Wasserstein stability.

\begin{lemma}
Let $(X,\mu)$ be a probability space, let $1\le p<\infty$, and let $f,g:X\to\mathbb{R}^n$ be measurable vector fields such that
\[
\|f(x)\|\ge \tau,
\qquad
\|g(x)\|\ge \tau
\]
for $\mu$-almost every $x\in X$, for some $\tau>0$. 
Then
\[
W_p(\mu_f,\mu_g)\le \frac{2}{\tau}\,\|f-g\|_{L^p(\mu)}.
\]
In particular, small perturbations of the vector field in $L^p$ produce small perturbations of the corresponding directional distributions in the $p$-Wasserstein metric.
\end{lemma}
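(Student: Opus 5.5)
Under the hypotheses of the lemma, $\|f\|\ge\tau$ and $\|g\|\ge\tau$ hold $\mu$-almost everywhere, so $X_\tau$ has full measure for both fields. The directional distributions $\mu_f$ and $\mu_g$ are therefore simply the push-forwards $(\nu\circ f)_\#\mu$ and $(\nu\circ g)_\#\mu$ of the probability measure $\mu$, and no renormalisation is needed. The plan is to bound $W_p$ from above by evaluating the transport cost of one explicit coupling, the synchronous coupling induced by the common base point:
\[
\gamma := (\nu\circ f,\ \nu\circ g)_\#\mu \in \Gamma(\mu_f,\mu_g).
\]
Its marginals are $\mu_f$ and $\mu_g$ by construction, so $\gamma$ is admissible. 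Since $W_p$ is an infimum over couplings, this gives
\[
W_p(\mu_f,\mu_g)^p \le \int_X \big\|\nu(f(x))-\nu(g(x))\big\|^p\,d\mu(x),
\]
where the metric on $S^{n-1}$ is taken to be the chordal (Euclidean) one. If the geodesic metric is preferred, the bound changes only by the factor $\pi/2$, and this should be remarked on.

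The key pointwise step is a Lipschitz estimate for the normalisation map $\nu$ away from the origin: for $\|v\|,\|w\|\ge\tau$,
\[
\Big\|\frac{v}{\|v\|}-\frac{w}{\|w\|}\Big\|\le \frac{2}{\tau}\|v-w\|.
\]
To prove it, insert the intermediate vector $w/\|v\|$ and apply the triangle inequality:
\[
\Big\|\frac{v}{\|v\|}-\frac{w}{\|w\|}\Big\| \le \frac{\|v-w\|}{\|v\|}+\|w\|\,\Big|\frac1{\|v\|}-\frac1{\|w\|}\Big| = \frac{\|v-w\|+\big|\|w\|-\|v\|\big|}{\|v\|} \le \frac{2\|v-w\|}{\|v\|}.
\]
The last inequality uses the reverse triangle inequality, and the whole expression is then at most $2\|v-w\|/\tau$. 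A sharper constant $1/\tau$ is available, but $2/\tau$ is exactly what the statement requires. Applying this with $v=f(x)$ and $w=g(x)$ for $\mu$-a.e.\ $x$, integrating, and taking $p$-th roots gives $W_p(\mu_f,\mu_g)\le \frac{2}{\tau}\|f-g\|_{L^p(\mu)}$.

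The main point needing care is not an estimate but checking that the objects are well defined. This requires three things. First, $\nu\circ f$ and $\nu\circ g$ must be measurable; this holds because $\nu$ is continuous on $\mathbb{R}^n\setminus\{0\}$ and $f,g$ avoid $0$ almost everywhere. Second, the $p$-th moments must be finite; this is automatic because $S^{n-1}$ is bounded. Third, $\mu_f$ as defined in the continuous setting, with normalisation by $\mu(X_\tau)=1$, must coincide with the push-forward. If $\|f-g\|_{L^p}=\infty$ the inequality holds trivially, so the argument may assume $f-g\in L^p(\mu)$.
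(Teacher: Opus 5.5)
Your proof is correct. The paper states this lemma without proof. Your argument uses the same device the paper employs in its proof of the BEPE stability lemma: the coupling that matches the two images of each base point $x$, here $(\nu\circ f,\nu\circ g)_\#\mu$. The only additional ingredient is your Lipschitz estimate for $\nu$ on $\{\|v\|\ge\tau\}$, which produces the constant $2/\tau$.

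One refinement to your closing remark about the geodesic metric. With your constant $2/\tau$, the extra factor $\pi/2$ would give $\pi/\tau$, which exceeds the stated bound. The sharper Euclidean constant $1/\tau$ that you mention does hold: the map $v\mapsto \tau v/\|v\|$ is the nearest-point projection onto the closed ball of radius $\tau$, hence $1$-Lipschitz. Using it gives $\frac{\pi}{2\tau}<\frac{2}{\tau}$, so the lemma holds as stated for either choice of metric on $S^{n-1}$.
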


An entirely analogous argument applies in the discrete setting, where $\mu$ is replaced by the empirical measure on the sample $S$. Thus, provided the vectors are bounded away from zero or regularized below a fixed threshold, the density-of-directions construction is stable under perturbations of the underlying sampled vector field.

There are several ways to further process the normalized directions obtained from a vector field. In both the discrete and continuous settings, the normalization procedure produces directional data on the sphere $S^{n-1}$: in the sampled case this takes the form of finitely many projected directions, while in the continuous case it gives a spherical directional measure or density associated with the normalized field. From such data one may construct a smooth probability density on $S^{n-1}$ by applying a suitable spherical kernel and normalizing the result. A natural choice is the von~Mises--Fisher kernel, which may be viewed as the spherical analogue of the Gaussian distribution. For $\mu \in S^{n-1}$, its density at a point $x \in S^{n-1}$ is given by
\[
K_\kappa(x,\mu)=C_n(\kappa)\exp\!\bigl(\kappa\langle x,\mu\rangle\bigr),
\]
where $\langle x,\mu\rangle$ denotes the Euclidean inner product, $\kappa>0$ is a concentration parameter controlling the spread of the kernel, and $C_n(\kappa)$ is the normalizing constant ensuring that the density integrates to one over $S^{n-1}$. In the case of the unit sphere $S^2$, this takes the explicit form
\[
K_\kappa(x,\mu)=\frac{\kappa}{4\pi\sinh(\kappa)}\exp\!\bigl(\kappa\langle x,\mu\rangle\bigr).
\]

In the present work we use this technique to operate on two dimensional vector filed. In this case  the normalized directions lie on the unit circle $S^1$. Consequently, the directional density is estimated using the von Mises kernel, which is the one-dimensional instance of the von Mises–Fisher family.

Applying such kernels to the directional data yields, in both the discrete and continuous settings, a smoothed spherical representation of the vector field in the form of a probability density on $S^{n-1}$. In the discrete case this amounts to a finite weighted sum of kernels centered at the projected sample points; in the continuous case it is obtained by averaging the same kernel against the corresponding directional measure on the sphere. The resulting density may be interpreted as a generalized histogram of directions and used as a feature for comparing vector fields. Distances between the corresponding spherical densities then provide surrogate distances between the original fields.

Moreover, since this representation lives on the sphere, it can also be compared modulo rotations by allowing the natural action of the rotation group $SO(n)$ on $S^{n-1}$. In this way, part of the burden associated with rotational misalignment or changes of coordinates may be transferred from the original phase space to a simpler optimization problem on the sphere. Thus, the density-of-directions construction turns a vector field into a compact and flexible feature, suitable both for direct comparison and for comparison up to selected geometric transformations. In practice, this comparison may be carried out either directly in the space of densities, using metrics such as $L^p$ or Wasserstein distances, or after optimization over rotations of the sphere.

\subsection{Begin-End Point Embedding (BEPE)}
\label{sec:BEPE}
A simple way to encode a vector field is to record, at each point of the phase space, both the location of the point and the value of the vector attached to it. This leads to the \emph{Begin--End Point Embedding} (BEPE), which represents the field as a subset of a higher-dimensional Euclidean space.

More preciselly, let $X\subset \mathbb{R}^n$ be a phase space and let $f:X\to\mathbb{R}^n$ be a vector field. We define the BEPE associated with $f$ by
\[
\operatorname{BEPE}_f : X \to \mathbb{R}^{2n},
\qquad
\operatorname{BEPE}_f(x):=(x,f(x)).
\]
In the sampled setting, if $S\subset X$ is a finite sample, then the corresponding embedded point cloud is
\[
\operatorname{BEPE}_f(S):=\{(x,f(x)) : x\in S\}\subset\mathbb{R}^{2n}.
\]
Thus, the original vector field is transformed into a geometric object in $\mathbb{R}^{2n}$ that simultaneously encodes where the vector is observed and what its value is.

This representation is particularly convenient because it allows one to use a broad range of comparison tools developed for point clouds and empirical measures. In particular, two vector fields may be compared by applying tools from topological data anaylyis, optimal transport methods, such as the Earth Mover's Distance \cite{EMD}, to their embedded representations. One may also use goodness-of-fit tests or other statistical procedures in the ambient space $\mathbb{R}^{2n}$ in order to quantify similarity between fields. The main price paid for this flexibility is the doubling of dimension: a vector field on $\mathbb{R}^n$ is represented in $\mathbb{R}^{2n}$. Nevertheless, the construction is simple, direct and compatible with both geometric and statistical comparison methods.

A useful feature of the begin--end point embedding is its stability under perturbations of the vector field. When two vector fields are defined on the same phase space, the first coordinate in the embedding remains unchanged and all variation is confined to the second coordinate. As a consequence, small perturbations of the vector field induce correspondingly small perturbations of the embedded representation in Wasserstein distance.

\begin{lemma}
Let $S=\{x_1,\dots,x_N\}\subset X$ be a finite sample and let $f,g:S\to\mathbb{R}^n$ be two sampled vector fields. Define the empirical BEPE measures
\[
\mu_f^S:=\frac1N\sum_{i=1}^N \delta_{(x_i,f(x_i))},
\qquad
\mu_g^S:=\frac1N\sum_{i=1}^N \delta_{(x_i,g(x_i))}.
\]
Then, with respect to the Euclidean metric on $\mathbb{R}^{2n}$,
\[
W_p(\mu_f^S,\mu_g^S)
\le
\left(
\frac1N\sum_{i=1}^N \|f(x_i)-g(x_i)\|^p
\right)^{1/p}
\]
for every $1\le p<\infty$.
\end{lemma}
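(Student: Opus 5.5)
The plan is to bound the Wasserstein distance from above by exhibiting one explicit coupling. Since $W_p$ is an infimum over all couplings in $\Gamma(\mu_f^S,\mu_g^S)$, any admissible coupling gives an upper bound. The natural choice is the diagonal, or ``identity on the sample'', coupling, which transports the atom at $(x_i,f(x_i))$ to the atom at $(x_i,g(x_i))$:
\[
\gamma:=\frac1N\sum_{i=1}^N \delta_{\left((x_i,f(x_i)),\,(x_i,g(x_i))\right)}\in\mathcal P(\mathbb{R}^{2n}\times\mathbb{R}^{2n}).
\]

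The first step is to check that $\gamma$ is admissible. Its first marginal is $\frac1N\sum_i\delta_{(x_i,f(x_i))}=\mu_f^S$ and its second marginal is $\mu_g^S$. Both measures are finitely supported, so they have finite $p$-th moments and $W_p$ is well defined.

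Repeated sample points cause no trouble. If some $x_i$ coincide, the atoms simply appear with multiplicity. The empirical measures are defined as sums over indices, and $\gamma$ is indexed by $i$ as well, so the marginal computation is unchanged.

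The second step is to evaluate the transport cost of $\gamma$. For each $i$, the Euclidean distance in $\mathbb{R}^{2n}$ between the paired points is
\[
\|(x_i,f(x_i))-(x_i,g(x_i))\|=\|(0,f(x_i)-g(x_i))\|=\|f(x_i)-g(x_i)\|.
\]
This holds because the first $n$ coordinates agree and the Euclidean norm of a vector of the form $(0,v)$ equals $\|v\|$. Hence
\[
\int d^p\,d\gamma=\frac1N\sum_{i=1}^N\|f(x_i)-g(x_i)\|^p .
\]
Taking the infimum over couplings and then $p$-th roots gives the stated inequality.

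There is no genuine obstacle in this argument. The only points requiring care are the marginal verification and the observation that the location coordinate contributes nothing to the distance. The latter is exactly where it matters that both fields are sampled on the same set $S$ with the same weights $1/N$.
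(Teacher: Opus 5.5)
Your proposal is correct and follows the paper's proof essentially verbatim: the paper also uses the coupling that pairs $(x_i,f(x_i))$ with $(x_i,g(x_i))$, observes that the location coordinates cancel so each pair costs $\|f(x_i)-g(x_i)\|$, and then takes the $p$-th root. Your explicit checks of the marginals and of repeated sample points simply spell out details the paper leaves implicit.
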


\begin{proof}
Consider the coupling that matches $(x_i,f(x_i))$ with $(x_i,g(x_i))$ for each $i=1,\dots,N$. Since the first coordinates agree, the transport cost between the matched points is
\[
\|(x_i,f(x_i))-(x_i,g(x_i))\| = \|f(x_i)-g(x_i)\|.
\]
Therefore,
\[
W_p(\mu_f^S,\mu_g^S)^p
\le
\frac1N\sum_{i=1}^N \|f(x_i)-g(x_i)\|^p,
\]
and taking the $p$-th root yields the claim. \qed
\end{proof}

Thus, the BEPE representation preserves quantitative proximity of sampled vector fields while translating the comparison problem into a geometric one in the ambient space $\mathbb{R}^{2n}$.

\subsection{Euler Characteristic Curves and Profiles (ECCs and ECPs)}\label{subsec:ECC intro}
The Euler characteristic is a fundamental topological invariant that provides a compact summary of the global structure of a space. In the context of filtered complexes, it gives rise to simple yet expressive descriptors obtained by tracking its value across the filtration parameter space. Owing to its additivity and low computational complexity, the Euler characteristic is particularly well suited for scalable topological data analysis. In this section we introduce two such descriptors: the \emph{Euler Characteristic Curve} (ECC), associated with single-parameter filtrations and the \emph{Euler Characteristic Profile} (ECP), defined for multiparameter filtrations~\cite{dlotko2023ecc}. Despite their conceptual simplicity, these invariants have proved useful in a range of applications, including medical data analysis~\cite{dlotko2023ecc}, fluid dynamics~\cite{fluid} and granular media analysis~\cite{ardanza2025granular}.

The constructions introduced below are formulated at the level of filtered cell complexes. We therefore begin by recalling the underlying combinatorial framework and the associated notion of filtration.

\begin{definition}[Cell complex]
A \emph{cell complex} $K$ is a finite collection of cells such that:
\begin{enumerate}
    \item if $\sigma \in K$, then every face of $\sigma$ also belongs to $K$;
    \item the intersection of any two cells of $K$ is either empty or a union of cells in $K$.
\end{enumerate}
A cell $\sigma \in K$ is $d$-dimensional if it is homeomorphic to an open unit ball in Euclidean space $\mathbb{R}^d$.  A complex $K$ is $d$-dimensional if its highest-dimension cells have a dimension of $d$. 
\end{definition}

\begin{definition}[Euler characteristic]
Let $K$ be a cell complex.  
The \emph{Euler characteristic} of $K$ is defined as the alternating sum
\[
\chi(K) \;=\; \sum_{d \ge 0} (-1)^d\, |K_d|,
\]
where $|K_d|$ denotes the number of $d$-dimensional cells of $K$.   
Equivalently, by the Euler-Poincar\'e formula, one may express it in terms of Betti numbers $\beta_d(K)$:
\begin{equation}
  \chi(K) \;=\; \sum_{d \ge 0} (-1)^d \, \beta_d(K),
\end{equation}
where $\beta_d(K)$ is the rank of the $d$-th homology group, see~\cite{edelsbrunner2010} for details.  
This dual combinatorial--homological nature is one of the main reasons why the Euler characteristic is attractive in applications: it is inexpensive to compute yet tightly linked to the deeper topological structure of the data.

\end{definition}

\begin{definition}[Filtration]
Let $K$ be a cell complex and let $P$ be a partially ordered set (poset). A \emph{$P$-filtration} of $K$ is a family of subcomplexes
\[
\{ K_p \}_{p \in P}
\]
such that
\[
p \le q \quad \Rightarrow \quad K_p \subseteq K_q .
\]
Intuitively, the parameter $p$ determines which cells of $K$ have entered the complex at a given stage of the filtration. 
\end{definition}
When the parameter set is totally ordered, the filtration becomes a nested sequence of complexes.

\begin{definition}[Single-parameter filtration]
A \emph{single-parameter filtration} is a filtration indexed by a totally ordered set, typically $P=\mathbb{R}$ or $P=\mathbb{Z}$. Thus, for any $s,t\in P$,
\[
s\le t \quad \Longrightarrow \quad K_s\subseteq K_t.
\]
When $K$ is finite, only finitely many distinct subcomplexes appear along the filtration.
\end{definition}

\begin{definition}[Multiparameter filtration]
Let $P = \mathbb{R}^n$ equipped with the product partial order
\[
(a_1,\dots,a_n) \le (b_1,\dots,b_n)
\quad \text{iff} \quad
a_i \le b_i \text{ for all } i .
\]
A \emph{multiparameter filtration} is a $P$-filtration
\[
\{K_p\}_{p \in \mathbb{R}^n}.
\]
Such filtrations are often called \emph{multifiltrations}. In this work we use the terms \emph{multiparameter filtration} and \emph{multifiltration} interchangeably. In particular, when $P=\mathbb{R}^2$ the filtration is commonly referred to as a \emph{bifiltration}.
\end{definition}

The multifiltration need not be defined directly by the components of the vector field. More generally, one may choose any collection of real-valued features derived from the field, \[ \Psi_F=(\psi_1,\ldots,\psi_m):D\to\mathbb{R}^m, \] and use their values as the filtration parameters. Depending on the application, these features may include the components of the vector field, its magnitude or orientation, divergence, curl, or quantities derived from the Jacobian. A single scalar feature defines a single-parameter filtration, whereas a collection of $m$ features defines an $m$-parameter multifiltration.

\begin{definition}[1-critical filtration]
Let $\{K_p\}_{p\in P}$ be a $P$-filtration of a cell complex $K$. We say that the filtration is \emph{1-critical} if for every cell $\sigma\in K$ there exists a unique minimal parameter value $p_\sigma\in P$ such that
\[
\sigma\in K_p \quad \Longleftrightarrow \quad p_\sigma \le p .
\]
In other words, each cell enters the filtration at a unique birth parameter. In case when $\sigma$ appears at more than one non comparable values of filtration, the multifiltration is called multi--critical.
\end{definition}

Let us point out that since in this work  multifiltrations are computed from vector fields, the multi--critical case is  not considered here.

\begin{definition}[Euler Characteristic Curve (ECC)]
Let $K$ be a cell complex equipped with a 1-critical filtration indexed by $\mathbb{R}$.
For each $t\in\mathbb{R}$, let $K_t$ denote the corresponding subcomplex.
The \emph{Euler Characteristic Curve} is the function
\[
\mathrm{ECC}(t)=\chi(K_t), \qquad t\in\mathbb{R}.
\]
\end{definition}

The ECC admits a particularly simple incremental representation. Whenever a cell $\sigma$ of dimension $d$ enters the filtration at its birth parameter $p_\sigma$, the Euler characteristic changes by $(-1)^d$.

\begin{definition}[Euler Characteristic Profile (ECP)]
Let $K$ be a cell complex equipped with a multiparameter filtration indexed by a poset $P$ (typically $P=\mathbb{R}^n$).
For each parameter value $p\in P$ we consider the corresponding subcomplex $K_p$ and define the \emph{Euler Characteristic Profile}
\[
\mathrm{ECP}(p) \;=\; \chi(K_p), \qquad p \in P.
\]
Thus, ECP generalizes ECC to arbitrary multiparameter filtrations, providing an integer-valued multidimensional invariant.
\end{definition}

Euler characteristic curves and profiles provide computationally efficient topological descriptors. In contrast to persistent homology, their evaluation requires only counting cells with alternating signs, which makes them particularly attractive for large-scale or high-dimensional datasets \cite{dlotko2023ecc} 

\subsection{Continuous Euler Characteristic and its approximation}
The ECP is a topological descriptor associated with a vector field through the Euler characteristics of its coordinatewise sublevel sets. This construction may be considered both in discrete and in continuous settings, and in the one dimensional case it reduces to the ECC. In this subsection, we define its continuous version, compute several elementary examples explicitly, and then show how it can be approximated from finite cubical data.

\begin{definition}[Continuous Euler Characteristic Curve (cECC)]
Let $X \subset \mathbb{R}^n$ be a compact domain and let $f : X \to \mathbb{R}^m$ be a continuous map.
For a point $y \in \mathbb{R}^m$, define the coordinatewise sublevel set
\[
    D^f_y \;=\; \{\, x \in X \mid f(x) \leq y \,\},
\]
where $f(x) \leq y$ means $f_i(x) \leq y_i$ for all coordinates $i=1,\dots,m$.
Assume that each sublevel set $D^f_y$ has finite Euler characteristic.
The \emph{continuous Euler Characteristic Profile} (cECP) of $f$ is the function
\[
    \mathrm{cECP}_f : \mathbb{R}^m \to \mathbb{Z},
    \qquad
    \mathrm{cECP}_f(y) \;=\; \chi(D^f_y).
\]
In the scalar case $m=1$, we write
\[
    \mathrm{cECC}_f(t) \;=\; \chi\bigl(f^{-1}((-\infty,t])\bigr),
\]
and refer to $\mathrm{cECC}_f$ as the \emph{continuous Euler Characteristic Curve}.
\end{definition}

\subsubsection{General geometric characterization of the cECP for linear maps}\label{General}

For simple classes of vector fields, the continuous ECP, cECP, can be computed analytically and characterized exactly. We illustrate this in the case of linear maps on rectangular domains, which provides a useful baseline for the more complex examples considered later.

Consider a rectangular domain
\[
D=[x_{\min},x_{\max}] \times [y_{\min},y_{\max}]=[w_1,w_2]\times[w_3,w_4]
\subset\mathbb{R}^2
\]
in the $(x_1,x_2)$-plane and a linear transformation
$f:\mathbb{R}^2\to\mathbb{R}^2$:
\[
f(x)=Mx,
\]
where
\[
M=
\begin{pmatrix}
a & b\\
c & d 
\end{pmatrix},
\qquad
x=(x_1,x_2),
\qquad 
f(x)=(y_1,y_2)\in\mathbb{R}^2.
\]

We ask for which values of $(y_1, y_2) \in \mathbb{R}^2$ the cECP satisfies
\[
\mathrm{cECP}_f(y_1, y_2)=1.
\]
 Note that cECP may attain only the values $0$ or $1$. More precisely, 
\[
\mathrm{cECP}_f(y_1, y_2)=1
\iff
\underset{ x\in D}\exists
\text{ such that }
f(x)\le y.
\]
Notice that, whenever $\det(M)\neq 0$, the image $f(D)$ of a rectangular domain under a linear transformation is a parallelogram, as schematically illustrated in Fig. \ref{fig:caseM0}. If $\det(M)=0$, then the image degenerates into a line segment.
\begin{figure}[h]
    \centering
    \includegraphics[width=0.6\textwidth]{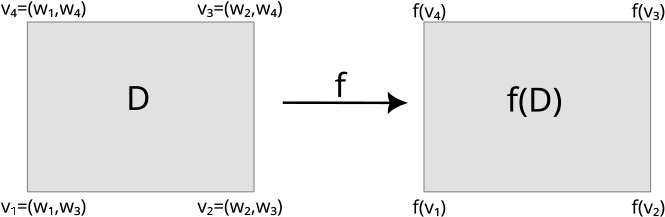}
    \caption{Rectangular domain under a linear transformation}
    \label{fig:caseM0}
\end{figure}

The geometry of the cECP transition region is completely determined by the relative position of the transformed vertices
$f(v_1),
f(v_2),
f(v_3),
f(v_4),$ 
where $\{v_1,v_2,v_3,v_4\} = \mathrm{Vert}(D).$ 

Depending on the geometry of the image $f(D)$, three different cases may occur. The three cases described below illustrate all the possibilities for $\det(M)\neq 0$.

\paragraph{Case 1: Single minimal vertex.}
Suppose that there exists a unique point $f(v^*) = (y_1^{min}, y_2^{min})$,  where $v^*\in\{v_1,v_2,v_3,v_4\}$ (without the loss of generality let it be $f(v_1)$ shown in Fig. \ref{fig:casesM=/=0} (a)), among the four vertices $\big(f(v_1), f(v_2), f(v_3),f(v_4)\big)$, such that for every $(y_1, y_2) \in f(D)$ we have 
\[
f_1(v_i)\ge f_1(v^*),
\qquad
f_2(v_i)\ge f_2(v^*),
\qquad
i\in\{1,2,3,4\}.
\]
In other words, the same transformed vertex minimizes simultaneously both coordinates. Then
\[
\mathrm{cECP}_f(y_1,y_2)=
\begin{cases}
1,
&
y_1\ge y_1^{\min},
\quad
y_2\ge y_2^{\min},
\\[1ex]
0,
&
\text{otherwise}.
\end{cases}
\]
Geometrically, the transition region is an orthant-type corner generated by the minimal vertex as shown in Fig.\ref{fig:casesM=/=0} (b). Note that this case also covers the situation illustrated in Fig.\ref{fig:caseM0}.

\paragraph{Case 2: Two neighboring minimal vertices.}
Suppose there exist two distinct neighboring points  $v^{1,*},v^{2,*}\in\{v_1,v_2,v_3,v_4\}$, $ v^{1,*}\neq v^{2,*}$ such that for every $(y_1, y_2) \in f(D)$ we have 
\[
f_1(v_i)\ge f_1(v^{1,*}),
\qquad
f_2(v_i)\ge f_2(v^{2,*}),
\qquad
i\in\{1,2,3,4\}
\]
and $v^{1,*}$ with $v^{2,*}$ form an edge of $D$ i.e. the vertex which minimizes $y_1$ is different than the one which minimizes $y_2$, but they are adjacent in the rectangular domain $D$. Then their images form an edge of the parallelogram $f(D)$.

Let $y_2=Ay_1+B$ be the equation of the affine line joining the points 
$f(v^{1,*})=f(v^{1,*}_1, v^{1,*}_2)$ and $f(v^{2,*})=f(v^{2,*}_1, v^{2,*}_2)$ then
\[
\mathrm{cECP}_f(y_1,y_2)=
\begin{cases}
1,
&
y_1\ge y_1^{\min},
\quad
y_2\ge y_2^{\min},
\quad
y_2\ge Ay_1+B,
\\[1ex]
0,
&
\text{otherwise}.
\end{cases}
\]
where $y_1^{\min}: = f_{1}(v^{1,*})=(av^{1,*}_1+bv^{1,*}_2$ and $y_2^{\min} :=  f_{2}(v^{2,*})=cv^{2,*}_1+dv^{2,*}_2$.
Geometrically, the transition region is a truncated affine corner bounded by a single oblique edge as shown in Fig. \ref{fig:casesM=/=0} (d).

\vspace{0.4cm}
Note that if points $v^{1,*}$ and $v^{2,*}$ coincide, then \textit{Case 2 }reduces to \textit{Case 1}.

\paragraph{Case 3: Two non-neighboring minimal vertices.}
Finally, suppose there exist two distinct non-neighboring points
$v^{1,*},v^{2,*}\in\{v_1,v_2,v_3,v_4\}$, $ v^{1,*}\neq v^{2,*}$ such that for every $(y_1, y_2) \in f(D)$ we have 
\[
f_1(v_i)\ge f_1(v^{1,*}),
\qquad
f_2(v_i)\ge f_2(v^{2,*}),
\qquad
i\in\{1,2,3,4\}
\]
but $v^{1,*}$ and $v^{2,*}$ are not adjacent in $D$. Then two affine boundary edges contribute to the transition region.

Let $\tilde v$ be one of the other two remaining vertices from the set $\{v_1,v_2,v_3,v_4\}$ which image $f(\tilde v)$ is closer to the boundary of the area $\big\{\{y_1, y_2\}: y_1\ge y_1^{\min}, y_2\geq y_2^{\min} \big\}$ than the image of the second remaining vertex.

Let $y_2=A y_1+B$ be the affine equation of the line joining $f(v^{1,*})$ with
$f(\tilde v),$ and let $y_2=C y_1+D$ be the affine equation of the line joining
$f(v^{2,*})$ with $f(\tilde v)$. Then
\[
\mathrm{cECP}_f(y_1,y_2)=
\begin{cases}
1,
&
y_1\ge y_1^{\min},
\quad
y_2\ge y_2^{\min},
\quad
y_2\ge Ay_1+B,
\quad
y_2\ge Cy_1+D,
\\[1ex]
0,
&
\text{otherwise}.
\end{cases}
\]
Geometrically, this corresponds to a double-truncated affine corner generated by two distinct oblique boundary edges as shown in Fig. \ref{fig:casesM=/=0} (f).

\begin{figure}[h]
    \centering
    \subfigure[]{\includegraphics[width=0.3\textwidth]{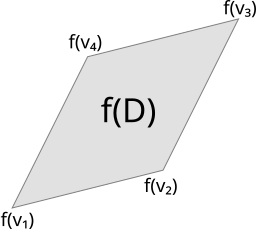}}
    \hspace{2cm}
    \subfigure[]{\includegraphics[width=0.35\textwidth]{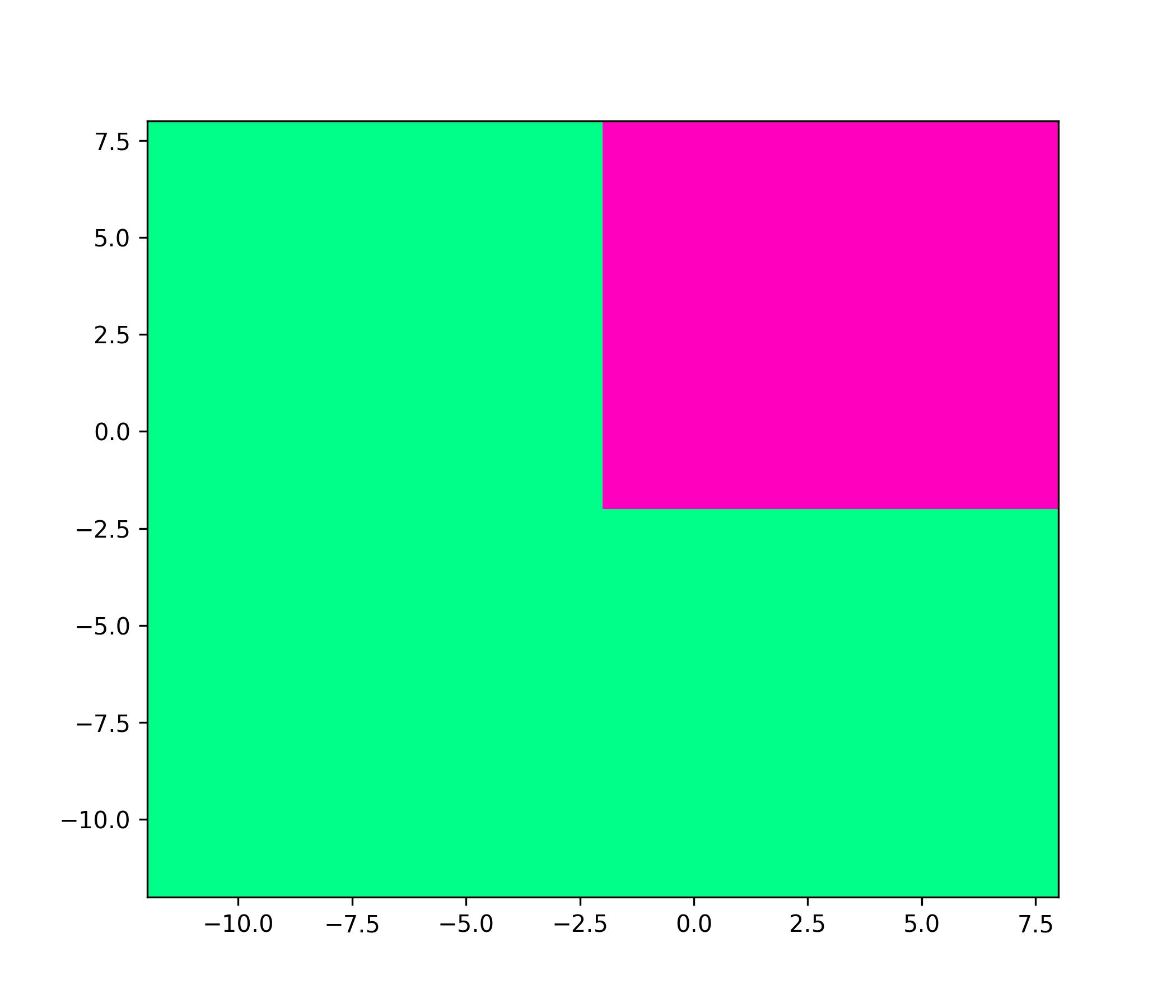}} \\
    \subfigure[]{\includegraphics[width=0.3\textwidth]{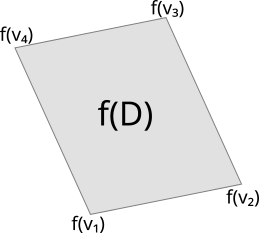}}
    \hspace{2cm}
    \subfigure[]{\includegraphics[width=0.35\textwidth]{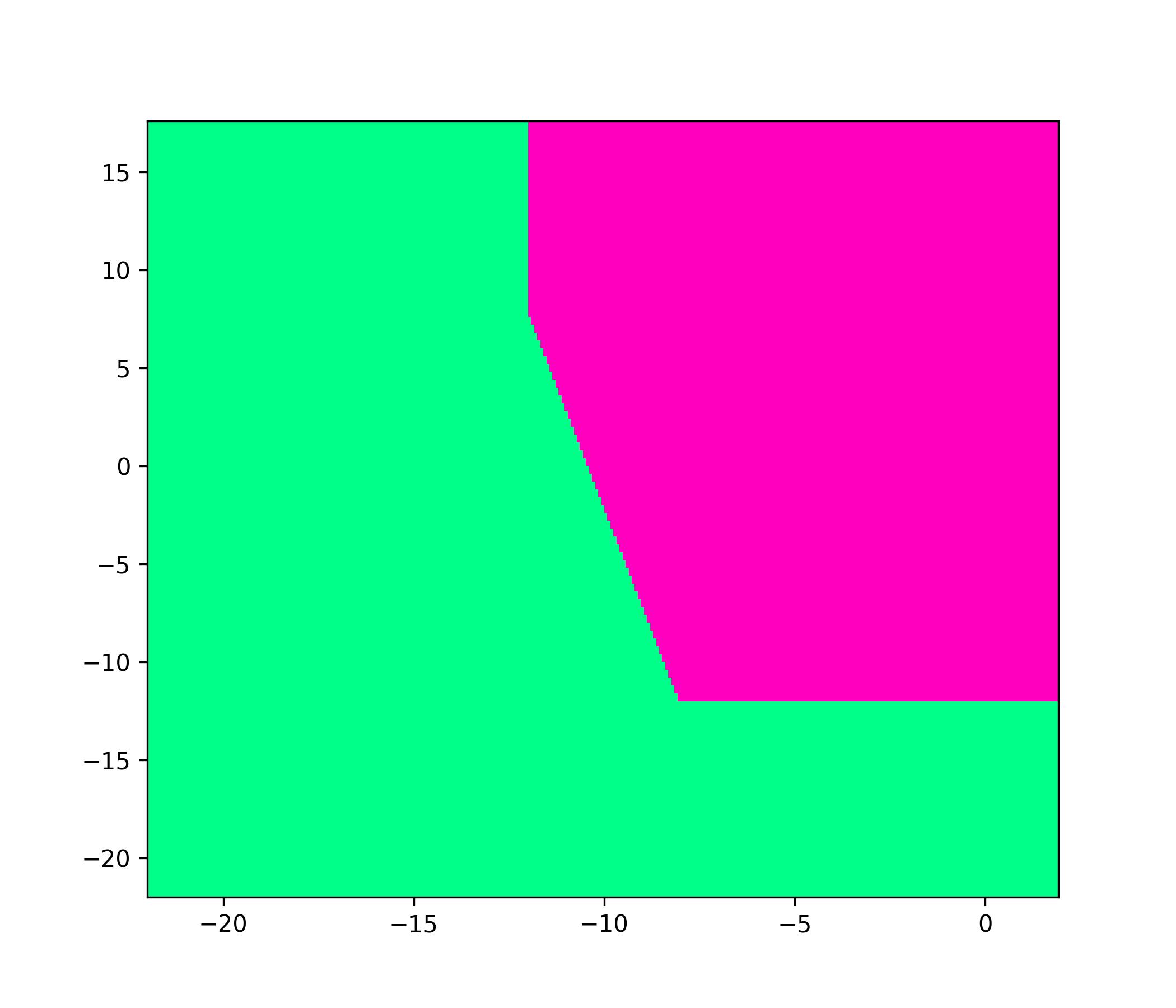}}    \\
    \subfigure[]{\includegraphics[width=0.3\textwidth]{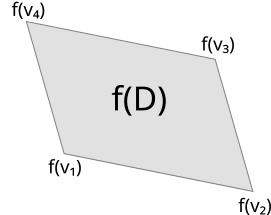}}
    \hspace{2cm}
    \subfigure[]{\includegraphics[width=0.35\textwidth]{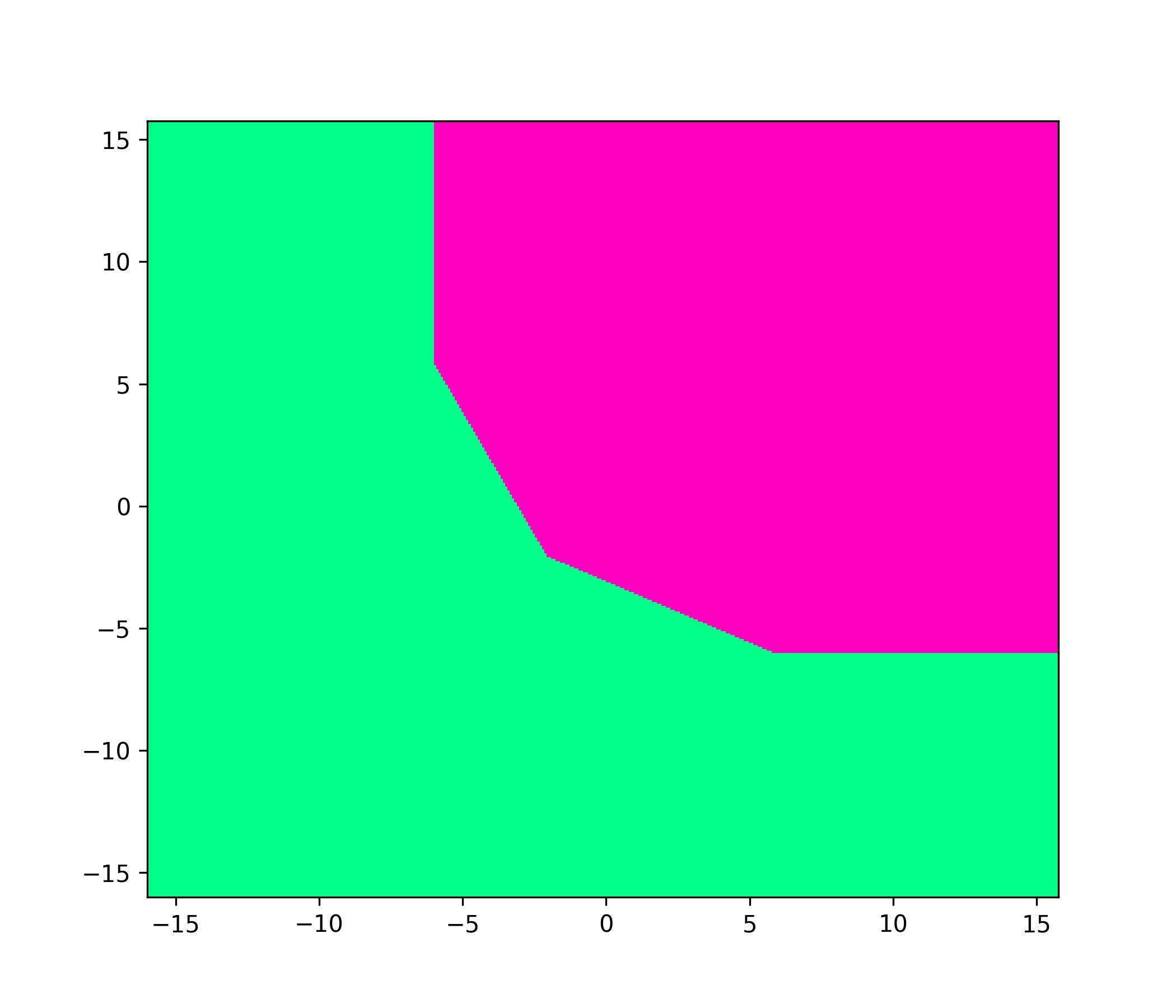}}
    \caption{Linear transformation of an rectangular domain and corresponding cECPs. Green corresponds to an cECP value of 0, while pink corresponds to an cECP value of 1}
    \label{fig:casesM=/=0}
\end{figure}

\subsubsection{Approximation}
The object $\mathrm{cECP}_f$, and in particular its scalar version $\mathrm{cECC}_f$, is an ideal one.
In applications one usually has access only to a finite approximation of the underlying function.
In this section we will focus on the one-parameter case $f:X\to\mathbb{R}$ and show that computable cubical approximations (see more details ~\cite{dlotko2018rigorous}) yield ECCs converging to $\mathrm{cECC}_f$ in the integral metric. 

\medskip

Let $f:X\to\mathbb{R}$ be continuous and let $\mathcal K$ be a cubical subdivision of $X$ (see~\cite{kaczynski2004computational} for details).
Assume that for every maximal cube $K\in\mathcal K$ one can compute an interval enclosure
\[
    [f(K)_{\min},\,f(K)_{\max}]
\]
such that
\[
    f(K)_{\min} \leq f(x) \leq f(K)_{\max}
    \qquad \text{for all } x\in K.
\]
Such enclosures can be computed with interval arithmetic and gives a piecewise constant lower semicontinuous approximation $\hat f$ of $f$. It is obtained by assigning to the cube $K$ the value

For each maximal cube (K), the value
\[
\frac{f(K)_{\min}+f(K)_{\max}}{2}
\]
is assigned to its interior. On intersections of neighboring cubes, the approximation is assigned the minimum of the values associated with all incident maximal cubes, following the construction described in~\cite{dlotko2018rigorous}.

In particular, if we ensure that for every $K \in \mathcal{K}$, $|f(K)_{\max} - f(K)_{\min}| \leq 2\varepsilon$ then the obtained approximation satisfy
\[
    \|f-\hat f\|_\infty \le \varepsilon.
\]

The above construction provides a way to approximate sufficiently regular continuous functions by finite data. Such a $\hat f$ will be called a finite lower semicontinuous piecewise constant cubical approximation of $f$.
We now turn to the approximation of the topology of their sublevel sets. For that we need additional mildness assumptions.

\begin{definition}[Tame]
A continuous function $f:X\to\mathbb{R}$ is called \emph{tame} if it has only finitely many homological critical values and if all homology groups of all sublevel sets
\[
D_t^f := f^{-1}((-\infty,t])
\]
have finite rank.
\end{definition}

For tame functions, persistence diagrams are well-defined and finite in each relevant dimension.
A particularly natural sufficient condition is that $f$ is Morse and has only finitely many critical points.
In that situation the barcode consists of finitely many intervals.
For each homological dimension $k$, we denote by $M_k$ an upper bound on the number of off-diagonal points in the persistence diagram in dimension $k$.

\begin{proposition}(see~\cite{dlotko2018rigorous})
\label{prop:cubical_approximation}
Let $f:X\to\mathbb{R}$ be tame, and let $\hat f_\varepsilon$ be a lower semicontinuous piecewise constant cubical approximation satisfying
\[
\|f-\hat f_\varepsilon\|_\infty \le \varepsilon.
\]
Then for every homological dimension $k$,
\[
d_B\!\left(\mathrm{Dgm}_k(f),\mathrm{Dgm}_k(\hat f_\varepsilon)\right)\le \varepsilon.
\]
\end{proposition}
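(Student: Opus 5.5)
The plan is to derive the statement from the stability theorem for persistence diagrams of sublevel-set filtrations. That theorem says $d_B(\mathrm{Dgm}_k(f),\mathrm{Dgm}_k(g))\le\|f-g\|_\infty$ for tame $f,g$ on the same space. The only difficulty is that $\hat f_\varepsilon$ is not continuous. We therefore check that the stability argument still applies to it.

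First we verify that $\hat f_\varepsilon$ is tame and that its sublevel sets are well behaved. By construction $\hat f_\varepsilon$ takes finitely many values, one per cell of the finite cubical complex $\mathcal K$. Each cell receives the minimum of the values on the incident maximal cubes, so a face is never assigned more than its cofaces. Hence every sublevel set $D_t^{\hat f_\varepsilon}$ is a union of closed cells, i.e. a cubical subcomplex of $\mathcal K$. This is exactly where lower semicontinuity is used. The filtration $\{D_t^{\hat f_\varepsilon}\}_t$ is therefore a finite filtration of a finite cubical complex. Its homology has finite rank, it changes only at finitely many values, and its persistence diagram is well defined and finite in each dimension.

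Next we produce the interleaving. From $\|f-\hat f_\varepsilon\|_\infty\le\varepsilon$ we get, for every $t$, the inclusions
\[
D_t^{f}\subseteq D_{t+\varepsilon}^{\hat f_\varepsilon}\subseteq D_{t+2\varepsilon}^{f},
\qquad
D_t^{\hat f_\varepsilon}\subseteq D_{t+\varepsilon}^{f}\subseteq D_{t+2\varepsilon}^{\hat f_\varepsilon}.
\]
These are set inclusions and commute with the inclusions inside each filtration. Applying $H_k$ gives an $\varepsilon$-interleaving of the persistence modules $H_k(D_\bullet^f)$ and $H_k(D_\bullet^{\hat f_\varepsilon})$. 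Both modules are q-tame: for $f$ by the tameness hypothesis, and for $\hat f_\varepsilon$ by the previous step. The algebraic stability (isometry) theorem of Chazal--Cohen-Steiner--Glisse--Guibas--Oudot then gives $d_B\le\varepsilon$ for every $k$. Equivalently, one can follow the box-lemma argument of Cohen-Steiner--Edelsbrunner--Harer, which only uses the interleaved inclusions and finiteness of ranks.

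The main obstacle is the homology theory. The sublevel sets of $f$ are arbitrary compact sets, while those of $\hat f_\varepsilon$ are cubical. The homology must be one in which both are computed consistently and in which tameness of $f$ yields finite-rank modules. I would use singular homology with field coefficients, in which cubical homology of a subcomplex agrees with singular homology. I would also check that the original stability theorem's requirement that both functions be continuous is only used to guarantee tameness. If that requirement is actually used elsewhere, I would instead invoke the interleaving form of stability directly, since it imposes no continuity assumption.
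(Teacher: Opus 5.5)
Your proposal is correct and follows the same route as the paper. The paper gives no argument beyond invoking the persistence stability theorem through the cited cubical-approximation result. Your $\varepsilon$-interleaving of the sublevel-set modules, followed by the algebraic isometry theorem for q-tame modules, is exactly what makes that theorem apply to a discontinuous, lower semicontinuous $\hat f_\varepsilon$; the interleaving works because the min-rule makes the sublevel sets of $\hat f_\varepsilon$ cubical subcomplexes.

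One small correction to your aside: the Cohen-Steiner--Edelsbrunner--Harer proof uses continuity beyond tameness, in its interpolation step through tame piecewise-linear approximations on a triangulation. So the box lemma alone does not give the bound, and the interleaving form of stability you chose is the right one.
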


Proposition~\ref{prop:cubical_approximation} provides control in bottleneck distance of persistence diagrams of tame functions approximated with interval arithmetic. However, the stability of ECCs of filtered complexes $C$ and $D$ is expressed in the \(1\)-Wasserstein metric, namely
\[
    \|ECC(C)-ECC(D)\|_{L^1}
    \le 2\sum_k W_1\!\left(\mathrm{Dgm}_k(C),\mathrm{Dgm}_k(D)\right),
\]
see~\cite{dlotko2023ecc}. Therefore in order to use the stability result for ECCs, we need to bound 1-Wasserstein distance with a bottleneck distance of persistence diagrams.

\begin{lemma}\label{lem:W1_from_bottleneck}
Let $C$ and $D$ be persistence diagrams, each having at most $M$ off-diagonal points.
Then
\[
    W_1(C,D) \le 2M\, d_B(C,D).
\]
\end{lemma}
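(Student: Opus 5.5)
The plan is to start from an optimal bottleneck matching and use it, unchanged, as a competitor in the infimum that defines $W_1$. Set $\delta=d_B(C,D)$. Since both diagrams have finitely many off-diagonal points, the infimum in the bottleneck distance is attained. Fix a bijection
\[
\gamma: C\cup\Delta\longrightarrow D\cup\Delta
\]
with $\|x-\gamma(x)\|_\infty\le\delta$ for every $x$. If attainment is not to be used, we can take a $\gamma$ with cost at most $\delta+\eta$ and let $\eta\to 0$ at the end.

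\textbf{Step 1: reduce to a finite sum.} By convention, points of $\Delta$ matched to points of $\Delta$ cost nothing. So the sum $\sum_x\|x-\gamma(x)\|_\infty$ only has nonzero terms from pairs in which at least one endpoint is an off-diagonal point of $C$ or of $D$. Every such pair contains at least one of the at most $M$ off-diagonal points of $C$, or one of the at most $M$ off-diagonal points of $D$. Each off-diagonal point belongs to exactly one pair, so there are at most $2M$ pairs with nonzero cost.

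\textbf{Step 2: bound the sum.} Each of these pairs costs at most $\delta$. Hence
\[
W_1(C,D)\le\sum_{x\in C\cup\Delta}\|x-\gamma(x)\|_\infty\le 2M\,\delta .
\]
The first inequality holds because $W_1$ is an infimum over all bijections and $\gamma$ is one of them. This gives $W_1(C,D)\le 2M\,d_B(C,D)$. (A sharper count gives $M_C+M_D$ in place of $2M$, but $2M$ is all that is claimed.)

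\textbf{Main point of care.} The only delicate step is the bookkeeping in Step 1. Diagonal points have infinite multiplicity, so I need the matched pairs $\Delta\leftrightarrow\Delta$ to contribute exactly zero and not an uncountable sum. This follows from the paper's convention that diagonal points are matched to one another at zero cost, so the sum has finitely many nonzero terms. Infinite points (essential classes) are handled in the same way, assuming they are matched among themselves as usual. I would remark that they only affect the count through $M$.
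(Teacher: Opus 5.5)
Your proposal is correct and follows the paper's proof essentially verbatim: you fix an optimal bottleneck matching, use it as a competitor in the infimum defining $W_1$, and note that at most $2M$ pairs involve off-diagonal points, each costing at most $d_B(C,D)$. Your extra remarks on attainment of the infimum and on diagonal-to-diagonal pairs contributing zero make explicit some bookkeeping that the paper leaves implicit.
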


\begin{proof}
Fix an optimal bottleneck matching between $C$ and $D$.
Under this matching, every off-diagonal point (i.e. a point with non-zero persistent homology)  of either diagram is moved by at most $d_B(C,D)$ in the $\ell^\infty$ norm. Since each diagram has at most $M$ off-diagonal points, there are at most $2M$ nontrivial matched points contributing to the 1-Wasserstein cost.
Therefore,
\[
    W_1(C,D)
    \le 2M\,d_B(C,D). 
\] \qed
\end{proof}
We now combine the bottleneck control coming from cubical approximation with the stability of ECCs with respect to the 1-Wasserstein distance.

\begin{theorem}\label{thm:cECC_convergence}
Let $X\subset \mathbb{R}^n$ be compact and let $f:X\to\mathbb{R}$ be a continuous tame function.
Assume that for every $\varepsilon>0$ there exists a finite lower semicontinuous piecewise constant cubical approximation $\hat f_\varepsilon$ such that
\[
    \|f-\hat f_\varepsilon\|_\infty \le \varepsilon.
\]
Assume moreover that only finitely many homological dimensions contribute to persistence, and that for each such dimension $k$ the diagrams
\[
    \mathrm{Dgm}_k(f)
    \qquad \text{and} \qquad
    \mathrm{Dgm}_k(\hat f_\varepsilon)
\]
have at most $M_k$ off-diagonal points for every $\varepsilon > 0$.

Then the ECCs satisfy
\[
    \|\mathrm{cECC}_f - ECC_{\hat f_\varepsilon}\|_{L^1}
    \le
    4\varepsilon \sum_k M_k,
\]
where
\[
    ECC_{\hat f_\varepsilon}(t)
    :=
    \sum_k (-1)^k \beta_k\!\left((\hat f_\varepsilon)^{-1}((-\infty,t])\right).
\]
In particular,
\[
    \|\mathrm{cECC}_f - ECC_{\hat f_\varepsilon}\|_{L^1} \longrightarrow 0
    \qquad \text{as } \varepsilon \to 0.
\]
\end{theorem}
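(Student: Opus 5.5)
The plan is to chain the three ingredients stated just before the theorem: Proposition~\ref{prop:cubical_approximation} (bottleneck control), Lemma~\ref{lem:W1_from_bottleneck} (bottleneck to $W_1$), and the ECC stability inequality of~\cite{dlotko2023ecc}.

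First, rewrite both curves in homological form. By the Euler--Poincar\'e formula, $\mathrm{cECC}_f(t)=\chi(D^f_t)=\sum_k(-1)^k\beta_k(D^f_t)$. Tameness makes every $\beta_k$ finite, and by assumption only finitely many $k$ contribute, so this sum is well defined. The same holds for $ECC_{\hat f_\varepsilon}$ by its definition.

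Second, express each Betti number through the persistence diagram: $\beta_k(D_t)$ equals the number of points $(b,d)\in\mathrm{Dgm}_k$ with $b\le t<d$. Hence
\[
\mathrm{cECC}_f(t)-ECC_{\hat f_\varepsilon}(t)=\sum_k(-1)^k\Bigl(\#\{(b,d)\in\mathrm{Dgm}_k(f):b\le t<d\}-\#\{(b,d)\in\mathrm{Dgm}_k(\hat f_\varepsilon):b\le t<d\}\Bigr).
\]
This is the setting in which the stability estimate $\|ECC(C)-ECC(D)\|_{L^1}\le 2\sum_k W_1(\mathrm{Dgm}_k(C),\mathrm{Dgm}_k(D))$ applies. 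It must hold for the continuous tame function $f$ and not only for filtered complexes. If the cited result covers only complexes, I would reprove it directly from the formula above. Fix, for each $k$, a matching $\gamma$ between the diagrams. A matched pair $(b,d)\leftrightarrow(b',d')$ makes the two indicator functions $\mathbf 1_{[b,d)}$ and $\mathbf 1_{[b',d')}$ differ on a set of measure at most $|b-b'|+|d-d'|\le 2\|(b,d)-(b',d')\|_\infty$. A point matched to the diagonal contributes $d-b\le 2\cdot\tfrac{d-b}{2}$, which is twice its $\ell^\infty$ distance to $\Delta$. Summing over the matching and taking the infimum gives the factor $2W_1$.

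Third, combine the steps. Proposition~\ref{prop:cubical_approximation} gives $d_B(\mathrm{Dgm}_k(f),\mathrm{Dgm}_k(\hat f_\varepsilon))\le\varepsilon$. Both diagrams have at most $M_k$ off-diagonal points, so Lemma~\ref{lem:W1_from_bottleneck} yields $W_1\le 2M_k\varepsilon$. The stability estimate then gives
\[
\|\mathrm{cECC}_f-ECC_{\hat f_\varepsilon}\|_{L^1}\le 2\sum_k 2M_k\varepsilon=4\varepsilon\sum_kM_k.
\]
The convergence as $\varepsilon\to0$ is immediate because the $M_k$ are uniform in $\varepsilon$.

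The main obstacle is infinite bars. $\beta_0$ of the full domain, and generally the essential classes of $X$, have death $=\infty$. The $L^1$ norm over $\mathbb R$ is finite only if these essential points are matched to one another, never to the diagonal, and only their birth coordinates are compared. I would handle this by noting that $X$ is compact and $f,\hat f_\varepsilon$ are bounded. For $t\ge\max f+\varepsilon$ both sublevel sets equal $X$, so the curves agree there, and for $t$ below both minima they both vanish. So the integral lives on a compact interval, and essential classes are paired birth-to-birth in the bottleneck matching of Proposition~\ref{prop:cubical_approximation}, with cost at most $\varepsilon$ each. Counting them among the $M_k$ points keeps the constant unchanged. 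A second, minor point is that $\hat f_\varepsilon$ is only lower semicontinuous. Its sublevel sets are still closed finite cubical sets, so its diagrams are finite and the Betti-number description in the second step is valid.
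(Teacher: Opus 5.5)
Your proposal is correct and follows the paper's proof: Proposition~\ref{prop:cubical_approximation} gives the bottleneck bound, Lemma~\ref{lem:W1_from_bottleneck} turns it into $W_1\le 2M_k\varepsilon$, and the factor-$2$ ECC stability estimate then gives $4\varepsilon\sum_k M_k$. Your bar-by-bar derivation of the stability inequality for the continuous tame $f$ and your treatment of essential classes are extra justifications that the paper omits (it simply cites the complex-level result); they do not change the argument.
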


\begin{proof}
By the persistence stability result,
\[
    d_B\!\left(\mathrm{Dgm}_k(f),\mathrm{Dgm}_k(\hat f_\varepsilon)\right)\le \varepsilon
    \qquad \text{for every } k.
\]
Applying Lemma~\ref{lem:W1_from_bottleneck} in each dimension gives
\[
    W_1\!\left(\mathrm{Dgm}_k(f),\mathrm{Dgm}_k(\hat f_\varepsilon)\right)
    \le 2M_k\,\varepsilon.
\]
The stability estimate for ECCs yields
\[
    \|\mathrm{cECC}_f - ECC_{\hat f_\varepsilon}\|_{L^1}
    \le
    2\sum_k
    W_1\!\left(\mathrm{Dgm}_k(f),\mathrm{Dgm}_k(\hat f_\varepsilon)\right).
\]
Combining the last two inequalities, we obtain
\[
    \|\mathrm{cECC}_f - ECC_{\hat f_\varepsilon}\|_{L^1}
    \le
    2\sum_k 2M_k\,\varepsilon
    =
    4\varepsilon \sum_k M_k.
\]
Since the right-hand side tends to zero as $\varepsilon\to 0$, the convergence in $L^1$ follows. \qed
\end{proof}

The above theorem shows that for sufficiently mild functions the ECC obtained from a verified cubical approximation converges to the continuous ECC of the original function as the approximation is refined.
Thus, by decreasing $\varepsilon$, one can make the computed curve arbitrarily close to the ideal object $\mathrm{cECC}_f$ in the integral metric.

\begin{remark}
The role of the finiteness assumption on the persistence diagrams is essential.
Bottleneck distance controls only the largest displacement of matched points, whereas the 1-Wasserstein distance accumulates all such displacements.
Therefore, passing from bottleneck stability to $L^1$-stability of ECCs requires a uniform bound on the number of off-diagonal points of both function $f$ and its approximation $\hat f_{\epsilon}$.
A Morse function with finitely many critical points provides a natural sufficient setting for this assumption.
\end{remark}

\begin{remark}
The previous argument is specific to the one-parameter case.
For multiparameter filtrations, one may define the continuous ECP coordinatewise, but an analogous convergence theorem requires a separate stability theory and is not addressed here.
\end{remark}

\subsection{ECP for sampled vector fields}

The construction of a multifiltration from a sampled vector field
depends on the geometry of the sampling set. For vector fields sampled
on regular Cartesian grids, we use cubical complexes: filtration values
are assigned to the top-dimensional cubes and extended to their faces
by taking coordinatewise minima. For irregularly sampled data, one may
instead construct a simplicial complex, such as an alpha complex, assign
the sampled vector-field values to its vertices, and extend them to
higher-dimensional simplices by taking coordinatewise maxima. In the
present work, all numerical experiments use regular grids and cubical
multifiltrations; the simplicial construction is included below only for
completeness.

\begin{definition}[Multifiltration induced by a sampled vector field]
\label{def:vector_field_multifiltration}
Let $D \subset \mathbb{R}^{n}$ be a bounded domain, let
$\mathcal K_D$ be the cubical complex as in
Definition~\ref{def:cubical_complex}, and let
\[
F=(f_1,\ldots,f_n)\colon D \to \mathbb R^{n}
\]
be a vector field sampled on the grid $\mathcal G_D$.

For every $n$-dimensional cell $Q\in\mathcal K_D$, let $c(Q)$ denote
the barycenter, of $Q$. We define the
multifiltration value of $Q$ by
\[
\varphi(Q)
=
F(c(Q))
=
\bigl(f_1(c(Q)),\ldots,f_n(c(Q))\bigr)
\in \mathbb R^{n}.
\]

The function $\varphi$ is extended to all cells of $\mathcal K_D$ as
follows. For every cell $\sigma\in\mathcal K_D$, let
\[
\mathcal C(\sigma)
=
\{Q\in\mathcal K_D \mid \sigma \subseteq Q,\ \dim Q=n\}
\]
denote the collection of incident $n$-dimensional cells. We then set
\[
\varphi(\sigma)
=
\min_{Q\in\mathcal C(\sigma)}
\varphi(Q),
\]
where the minimum is taken coordinatewise, that is,
\[
\varphi(\sigma)
=
\left(
\min_{Q\in\mathcal C(\sigma)} \varphi_1(Q),
\ldots,
\min_{Q\in\mathcal C(\sigma)} \varphi_n(Q)
\right).
\]

The resulting map
\[
\varphi \colon \mathcal K_D \to \mathbb R^{n}
\]
is called the \emph{multifiltration function induced by the vector
field} $F$. It satisfies
\[
\tau \subseteq \sigma
\quad\Longrightarrow\quad
\varphi(\tau)\leq \varphi(\sigma),
\]
where $\leq$ denotes the product order on $\mathbb R^n$. Consequently,
for every $a\in\mathbb R^n$, the collection
\[
K_a
=
\{\sigma\in\mathcal K_D \mid \varphi(\sigma)\leq a\}
\]
forms a subcomplex of $\mathcal K_D$, and the family
$\{K_a\}_{a\in\mathbb R^n}$ defines a multifiltration. In the particular
case $n=2$, the resulting multifiltration is called a
\emph{bifiltration}.
\end{definition}

\begin{remark}[Irregularly sampled vector fields]
Let $S\subset D$ be a finite, irregular sample and suppose that the
vector-field values
\[
F(v)\in\mathbb R^n,
\qquad v\in S,
\]
are known at the sample points. One may construct a simplicial complex
$\mathcal A_\alpha(S)$ on $S$, for example an alpha complex~\cite{edelsbrunner2010} at a fixed
scale $\alpha$. The filtration values are first assigned to the vertices
by
\[
\varphi(v)=F(v),
\qquad v\in S,
\]
and then extended to every simplex
$\sigma\in\mathcal A_\alpha(S)$ by
\[
\varphi(\sigma)
=
\max_{v\in\sigma}\varphi(v),
\]
where the maximum is taken coordinatewise. Thus,
\[
\varphi(\sigma)
=
\left(
\max_{v\in\sigma}\varphi_1(v),
\ldots,
\max_{v\in\sigma}\varphi_n(v)
\right).
\]
If $\tau\subseteq\sigma$, then
\[
\varphi(\tau)\leq\varphi(\sigma),
\]
and therefore the coordinatewise sublevel sets of $\varphi$ form
simplicial subcomplexes and define a multifiltration. This simplicial
construction is not used in the numerical experiments presented in this
work.
\end{remark}

\section{Numerical Examples: Low-Dimensional Vector Fields}
\label{sec:ECP_linear}
We begin by considering a number of simple low-dimensional dynamical systems whose behavior is well understood analytically and geometrically. These examples provide a convenient test bench for the proposed method, making it possible to assess in a controlled setting how ECPs fundamental dynamical patterns before turning to more complicated systems. In particular, we examine examples displaying characteristic structures such as closed trajectories and isolated stationary points, and study the effect of different filtration choices on the resulting ECPs. 

\begin{remark}
To carry out the numerical experiments considered in this section, the code provided in~\cite{dlotko2023ecc} proved insufficient for our purposes. We therefore developed a more efficient algorithm for computing ECPs. This algorithm forms part of the methodological contribution of the present paper, and its details are presented in Appendix~A.
\end{remark}

To facilitate reproducibility and further development of the proposed methodology, the basic implementation for computing ECPs for vector fields has been made publicly available in an open-source repository. The repository contains the core algorithms together with illustrative examples for representative classes of dynamical systems. It is actively maintained and continuously extended with additional examples, numerical experiments, and new computational functionalities developed throughout the course of this research. 
The source code is publicly available in the GitHub repository~\cite{github}, which is continuously updated with new examples and implementations.

\subsection{Signatures of Closed Orbits}
We begin by investigating the ECP signatures associated with periodic behavior in planar vector fields. Since the proposed constructions are defined in the continuous setting but require discrete representations for numerical evaluation, we approximate the vector field on a uniform spatial grid defined below. All simulations are performed on a uniform grid over the domain $[-2,2]^2$ with resolution $200 \times 200$, and the curves are sampled using $600$ equidistant parameter values. The tangent vectors are normalized prior to further processing.

\subsubsection{ECPs of closed orbits}
The Euler Characteristic Profile satisfies the inclusion--exclusion
formula pointwise in the filtration parameter. Let $A$ and $B$ be
subcomplexes equipped with the restrictions of the same filtration
function. Then, for every parameter value $u$,
\[
\operatorname{ECP}_{A\cup B}(u)
=
\operatorname{ECP}_{A}(u)
+
\operatorname{ECP}_{B}(u)
-
\operatorname{ECP}_{A\cap B}(u).
\]
The formula follows from the inclusion--exclusion property of the
Euler characteristic. In particular, the ECP is additive when the
filtered parts $A_u$ and $B_u$ are disjoint for every $u$.

In our setting, we decompose the phase space into the closed orbit and the remaining background region. Along the orbit, the vector field is given by the normalized tangent vector to the parametrized curve, so that it reflects the local geometry of the periodic trajectory. Outside the orbit, we prescribe a constant vector field with value $(v_x,v_y)$, which we vary in order to study its influence on the resulting profile. This background field has a particularly simple ECP: it is equal to $1$ at all points above $(v_x,v_y)$ and $0$ elsewhere. Again, the ECP of the full system is obtained by combining this contribution with the one generated by the orbit itself using inclusion-exclusion property.

Our goal in this section is to identify characteristic ECP fingerprints of closed orbits, starting from the simplest case of a circular trajectory and then passing to geometrically more complex closed curves. To this end, we consider two parametrized closed curves: a circle and a blob-like deformation of a circle, both given by
\begin{equation}
    \begin{aligned}
        x(t) &= r(t)\cos(t),\\
        y(t) &= r(t)\sin(t),
    \end{aligned}
    \label{eq:curves}
\end{equation}
where for the circle we take \(r(t)=r\), while for the blob-like curve we set
\[
r(t)=r_0\Bigl(1+\sum_i a_i\cos(f_i t)\Bigr).
\]

The associated vector field is constructed from the tangent vectors along the curve, while the background grid is filled with zero vectors, that is $v_x = v_y = 0$. Tangent vectors were normalized before constructing the filtration, allowing experiments to focus on the geometric organization of the trajectories rather than their local magnitude. Fig. ~\ref{fig:circle_blob} shows the corresponding phase portraits and ECPs for a circle of radius \(r=1\) and for a blob-like curve with parameters \(r_0=1\), \(a_1=0.15\), \(a_2=0.1\), \(f_1=3\) and \(f_2=5\).

\begin{figure}[h]
    \centering
    \subfigure[]{\includegraphics[width=0.45\textwidth]{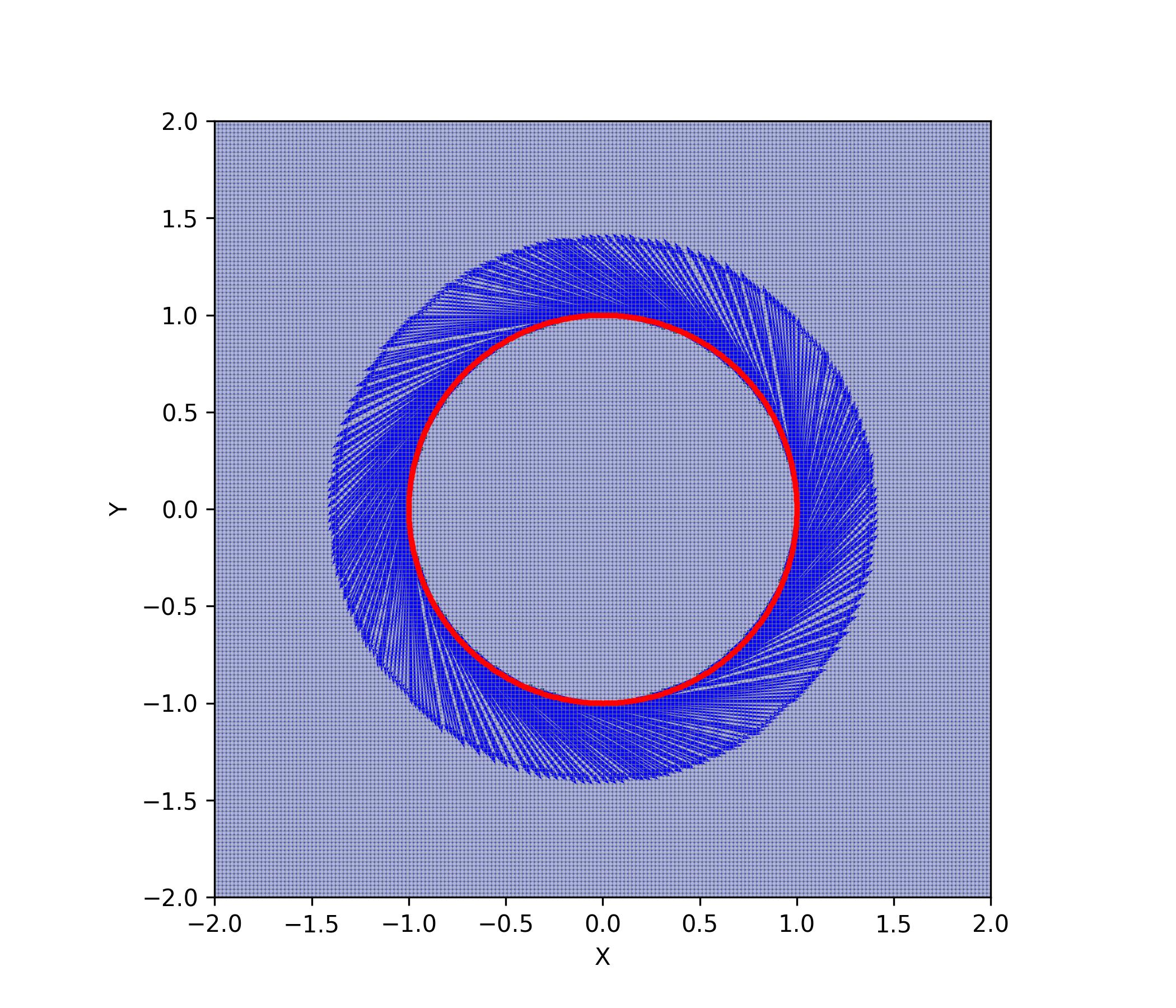}}
    \subfigure[]{\includegraphics[width=0.45\textwidth]{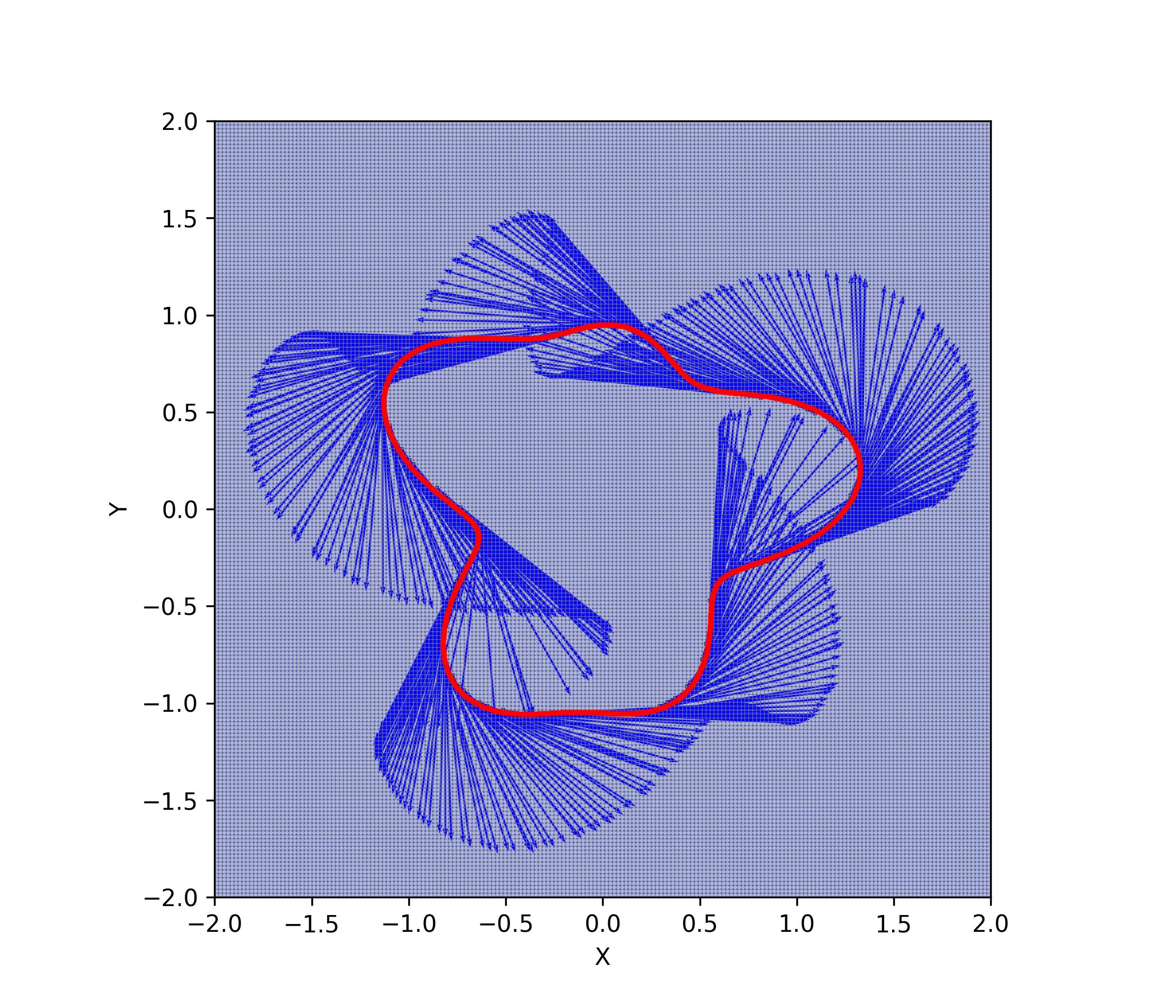}} \\

    \subfigure[]{\includegraphics[width=0.45\textwidth]{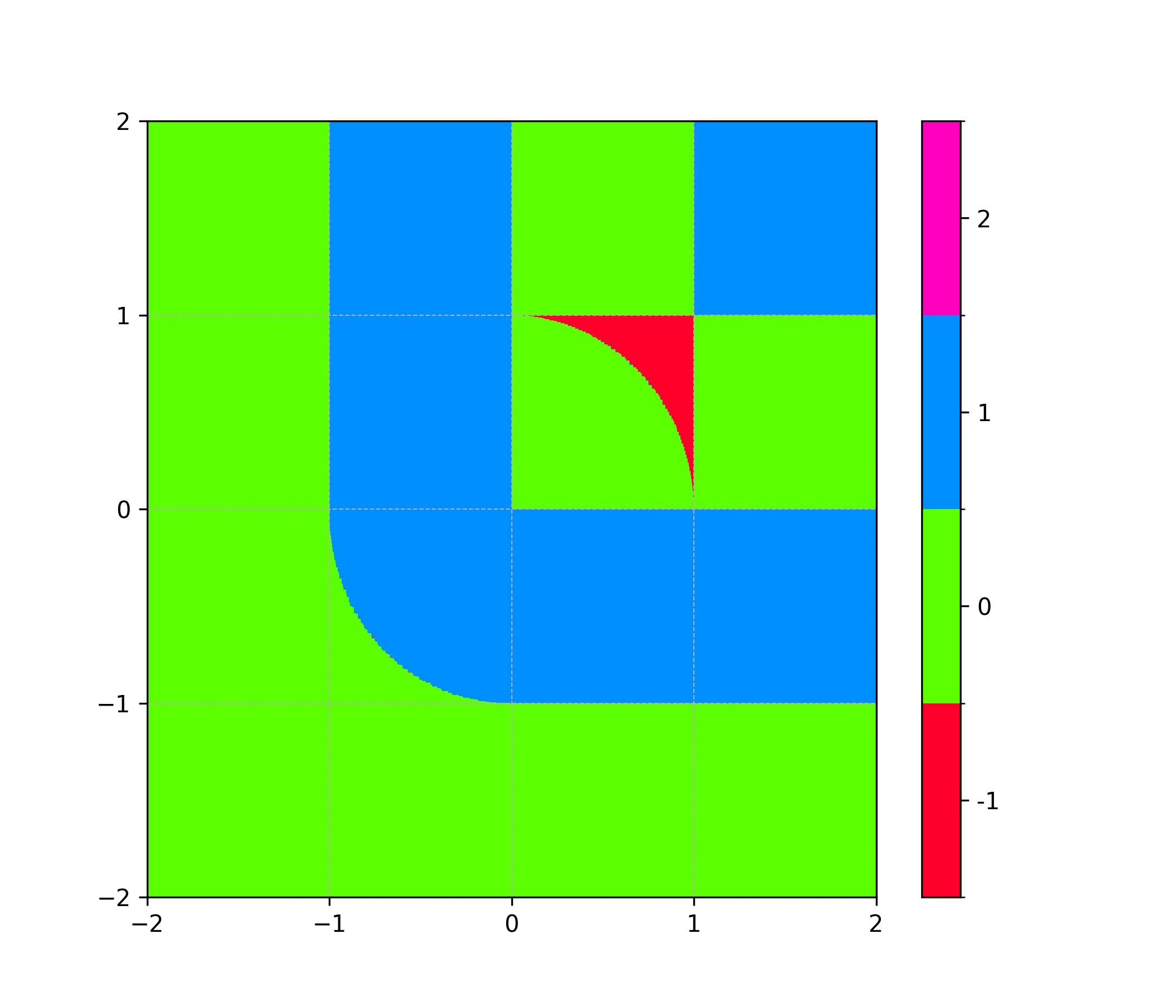}}
    \subfigure[]{\includegraphics[width=0.45\textwidth]{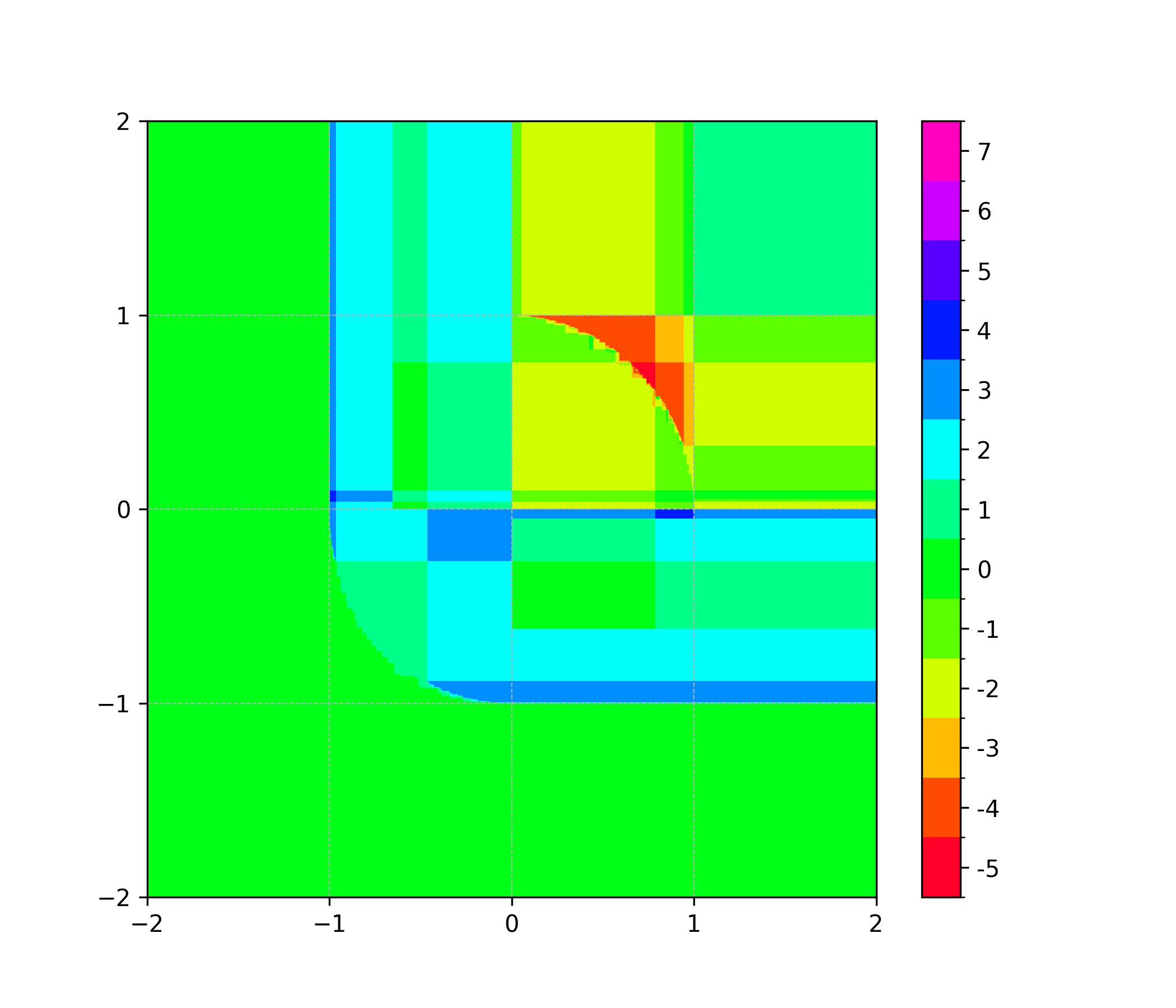}}
    \caption{Representative periodic trajectories used in the preliminary validation of the proposed methodology: (a)-(b) phase portraits of a circle and a blob-shaped closed curve with normalized tangent vector fields, and (c)-(d) the corresponding Euler Characteristic Profiles (ECPs) computed for zero background vectors}
    \label{fig:circle_blob_back_0}
\end{figure}

In this setting, the contribution of an isolated closed periodic orbit of approximately circular shape has a characteristic form in the ECP: it appears as an $L$-shaped corner determined by the minimal and maximal values attained by the vector field along the orbit. 
The values of the ECP within this $L$-shaped region depend strongly on the monotonicity of changes in directions of the orbit. Below this region, the ECP is equal to $0$, whereas above it, the ECP is equal to $1$.

We can note that in the case of an ellipse (for the normalized tangent direction distribution under a monotone reparametrization of direction), we obtain exactly the same ECP as for a circle (Fig. ~\ref{fig:circle_blob_back_0}c), whereas more deformed curves, such as the blob, produce more complex profiles (Fig. ~\ref{fig:circle_blob_back_0}d) which is caused by more rapid changes in direction vectors (and thus more frequent changes of monotonicity) and, consequently, bifiltration. This difference is expected, as more intricate curves generate a greater variety of tangent vectors, although the overall shape of the ECP remains qualitatively similar.  

Next, we introduce a uniform non-zero background vector. Figure~\ref{fig:circle_blob} presents the ECPs for the circle and blob under two background vectors: (a)-(b) $[v_x,v_y] = [10, 10]$ and (c)-(d) $[v_x,v_y] = [-1.5, -1]$, which means that background vectors are taken into account at the very end (Fig.~\ref{fig:circle_blob}a and Fig.~\ref{fig:circle_blob}b) and at the very beginning (Fig.~\ref{fig:circle_blob}c and Fig.~\ref{fig:circle_blob}d) 

\begin{figure}[h]
    \centering
    \subfigure[]{\includegraphics[width=0.45\textwidth]{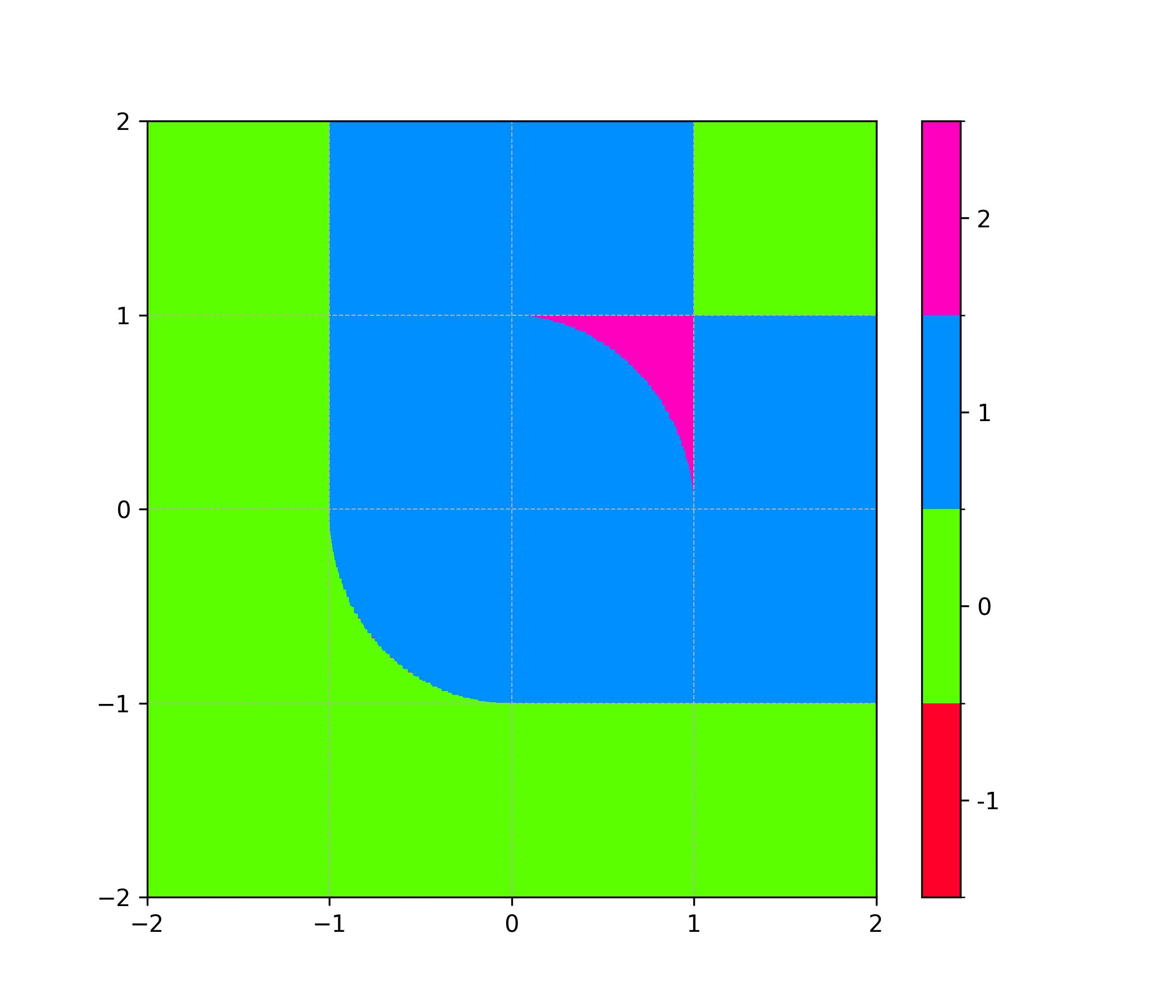}}
    \subfigure[]{\includegraphics[width=0.45\textwidth]{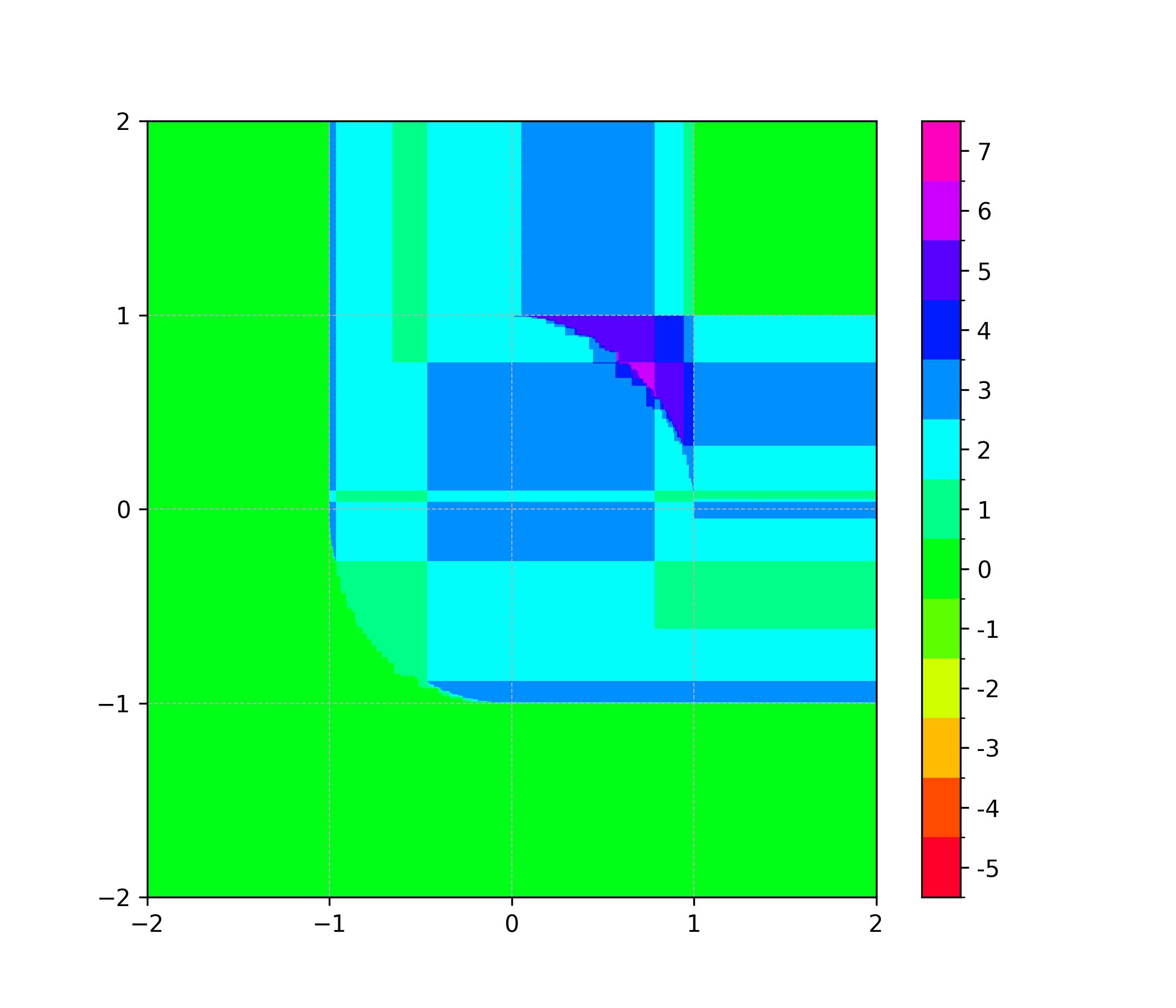}} \\
    \subfigure[]{\includegraphics[width=0.45\textwidth]{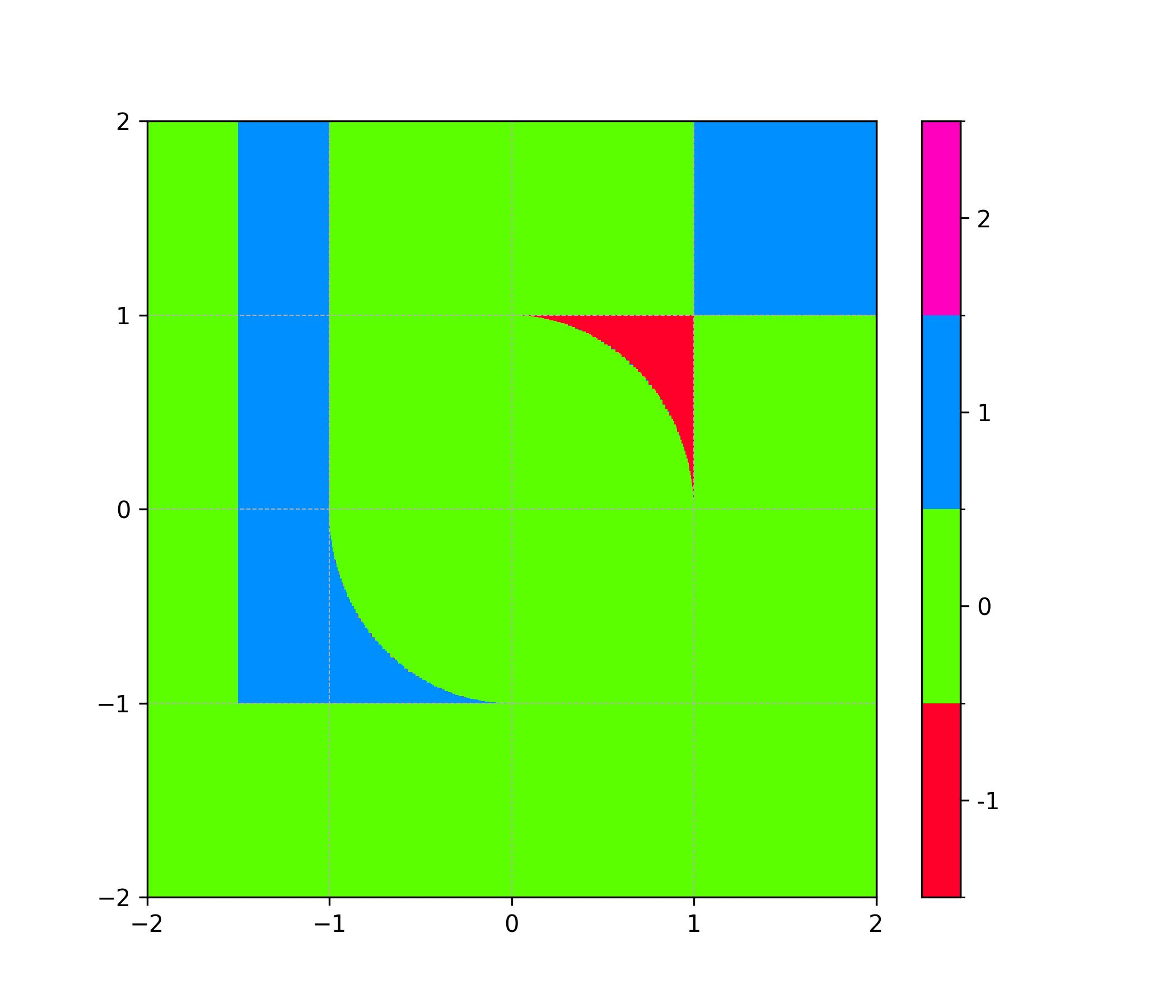}}
    \subfigure[]{\includegraphics[width=0.45\textwidth]{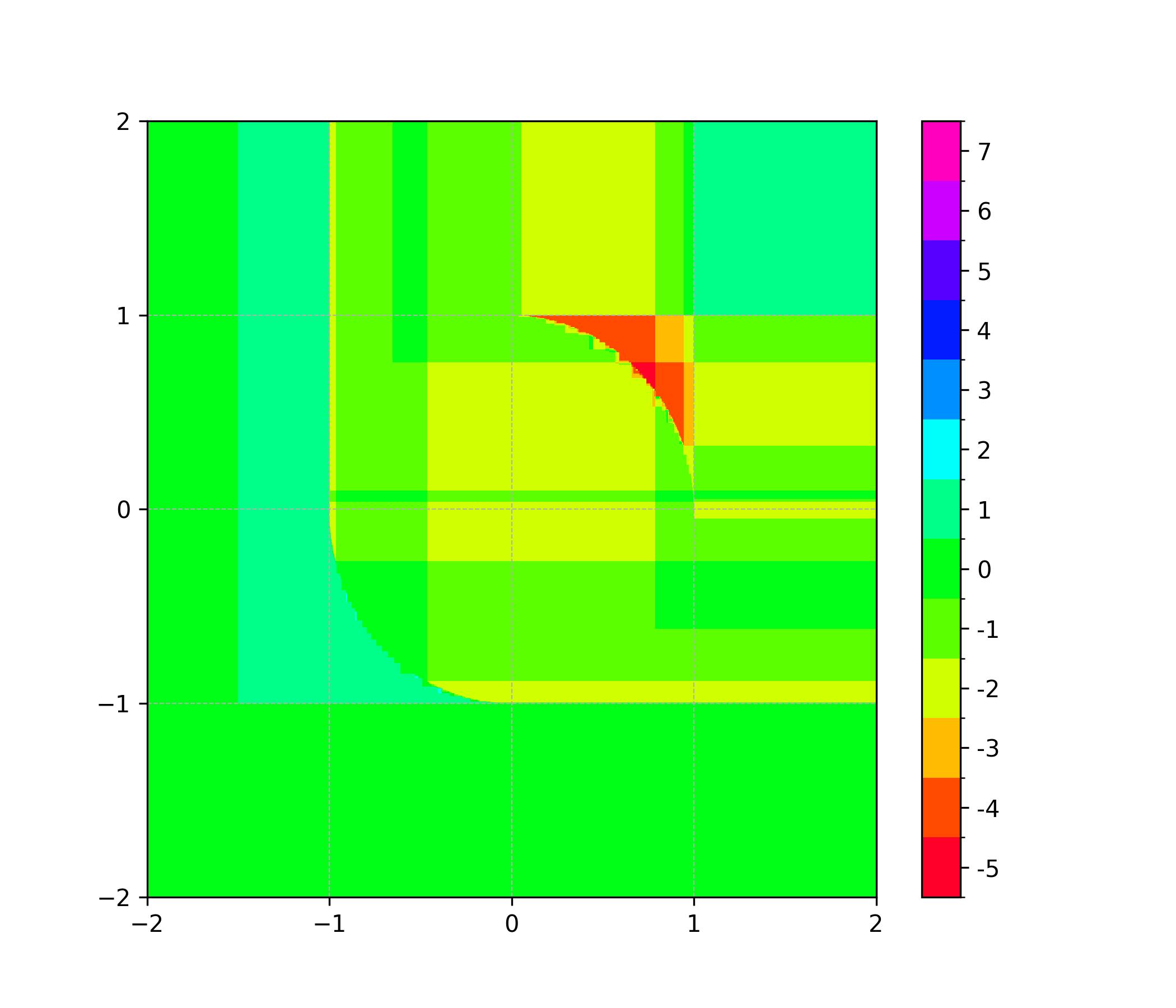}}
    \caption{Euler Characteristic Profiles (ECPs) of the circle and blob-shaped trajectories under constant background vector fields: (a)-(b) background vector $(10,10)$ and (c)-(d) background vector $(-1.5,-1)$. The results illustrate the influence of uniform background flows on the resulting topological descriptors}
    \label{fig:circle_blob}
\end{figure}

The following lemma summarizes the shape of cECC for the points in the unit circle with the tangent vector field defined on it.

\begin{lemma}\label{lem:circle_only_ecp}
Let \(\Gamma=S^1\subset\mathbb{R}^2\) be the unit circle, and let
\[
T:\Gamma\to\mathbb{R}^2,\qquad T(x,y)=(-y,x),
\]
be the normalized tangent vector field on \(\Gamma\).
For \(u=(u_1,u_2)\in\mathbb{R}^2\), define
\[
A_u:=\{p\in \Gamma : T(p)\le u\},
\]
where the inequality is understood coordinatewise.
Then \(A_u\) is one of the following:
\begin{enumerate}
    \item the empty set,
    \item a single closed arc,
    \item a disjoint union of two closed arcs,
    \item the whole circle \(\Gamma\).
\end{enumerate}
Consequently,
\[
\chi(A_u)\in\{0,1,2\}.
\]
\end{lemma}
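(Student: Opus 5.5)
Parametrize $\Gamma$ by $p(t)=(\cos t,\sin t)$, $t\in[0,2\pi)$, so that $T(p(t))=(-\sin t,\cos t)$. The condition $T(p)\le u$ then becomes the pair of scalar constraints
\[
-\sin t\le u_1,\qquad \cos t\le u_2 .
\]
Set $I_1=\{t:\sin t\ge -u_1\}$ and $I_2=\{t:\cos t\le u_2\}$. Then $A_u$ is the image of $I_1\cap I_2$ under $p$. The plan is to describe each $I_j$ separately and then classify their intersection on the circle.

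For a single constraint of the form $\sin t\ge c$, there are three possibilities. If $c>1$, the set is empty. If $c\le -1$, it is the whole circle. If $-1<c\le 1$, it is one closed arc, centred at $t=\pi/2$, degenerating to the point $\{\pi/2\}$ when $c=1$. The constraint $\cos t\le c'$ behaves the same way, with its arc centred at $t=\pi$. Hence each $I_j$ is $\emptyset$, a closed arc (possibly a point), or all of $S^1$. If either set is empty, $A_u=\emptyset$. If either is the whole circle, $A_u$ equals the other set, so it is empty, a single arc, or $\Gamma$.

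The remaining case, which is the main point, is when both $I_1$ and $I_2$ are proper closed arcs. Here I would use the elementary fact that two closed arcs on $S^1$ intersect in $\emptyset$, one closed arc, or two disjoint closed arcs. To see this, cut the circle at a point outside $I_1$, so that $I_1$ becomes an interval $[\alpha,\beta]$ in $\mathbb{R}$. The complement of $I_2$ is a single open arc. Therefore $I_2$ meets $[\alpha,\beta]$ in $[\alpha,\beta]$ minus one open interval, and that has at most two components. Each component is closed, since both sets are closed and connected components of a compact set are closed. Two components are possible, for example when $u_1,u_2$ are close to their extremes: then $I_1$ and $I_2$ are long arcs whose complements lie in different places, and their union covers the circle. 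This case does not need to be excluded.

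Finally, $\chi$ of a closed arc, including a degenerate point, is $1$. A disjoint union of two such arcs has $\chi=2$. The empty set has $\chi=0$, and the circle has $\chi=0$. This gives $\chi(A_u)\in\{0,1,2\}$.

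The step where care is needed is the two-arc intersection lemma, including the degenerate endpoints where the arcs touch in isolated points. I would handle it with the cutting argument above. I do not expect a real obstacle.
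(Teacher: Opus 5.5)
Your proposal is correct and follows the paper's proof: parametrize by $t$, reduce $A_u$ to the intersection of the arcs $\{-\sin t\le u_1\}$ and $\{\cos t\le u_2\}$, and read off $\chi$ from the four possible shapes. You are more careful than the paper in handling the empty, full-circle and single-point cases and in justifying the two-arc intersection fact, which the paper only asserts; one small point is that if your cut point lies in $S^1\setminus I_2$, that open arc meets $[\alpha,\beta]$ in up to two end pieces rather than one interval, but this leaves at most one component, so your conclusion stands.
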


\begin{proof}
Write \(p=(\cos t,\sin t)\), \(t\in[0,2\pi]\). Then
\[
T(p)=(-\sin t,\cos t).
\]
Hence
\[
A_u=\{t\in[0,2\pi] : -\sin t\le u_1,\ \cos t\le u_2\}.
\]
Each of the two inequalities defines a closed arc on \(S^1\), and therefore
\(A_u\) is the intersection of two closed arcs on the circle.
Such an intersection can only be empty, a single closed arc, a disjoint union of two closed arcs, or the whole circle.
The Euler characteristic of these four possibilities is, respectively, $0, 1, 2, 0$.
This proves the claim.
\qed
\end{proof}

Let us now consider three configurations of circles: concentric, disjoint and intersecting. Their corresponding phase portraits and ECPs are presented in Figure~\ref{fig:circle_vf_ECP}. As can be observed, the concentric and disjoint circles exhibit identical characteristics. This outcome is expected, since, from a topological perspective, a displacement of a geometric object does not alter its Euler characteristics unless it induces additional topological features, such as new connected components or holes. In contrast, intersecting circles introduce topological changes, which are clearly reflected in the corresponding ECP, demonstrating a significant impact on the system's characteristics.

\begin{figure}[h]
    \centering
    \subfigure[]{\includegraphics[width=0.325\textwidth]{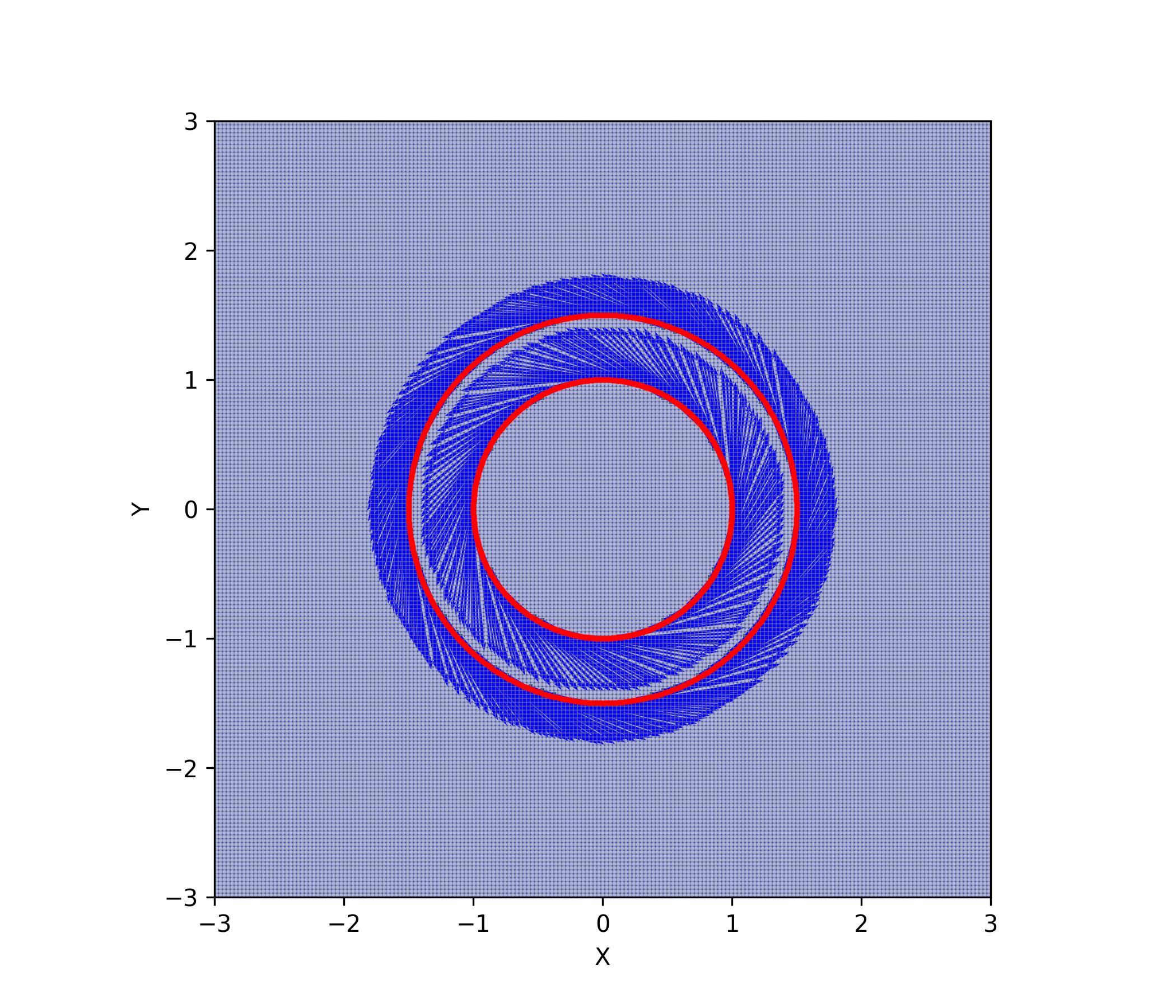}}
    \subfigure[]{\includegraphics[width=0.325\textwidth]{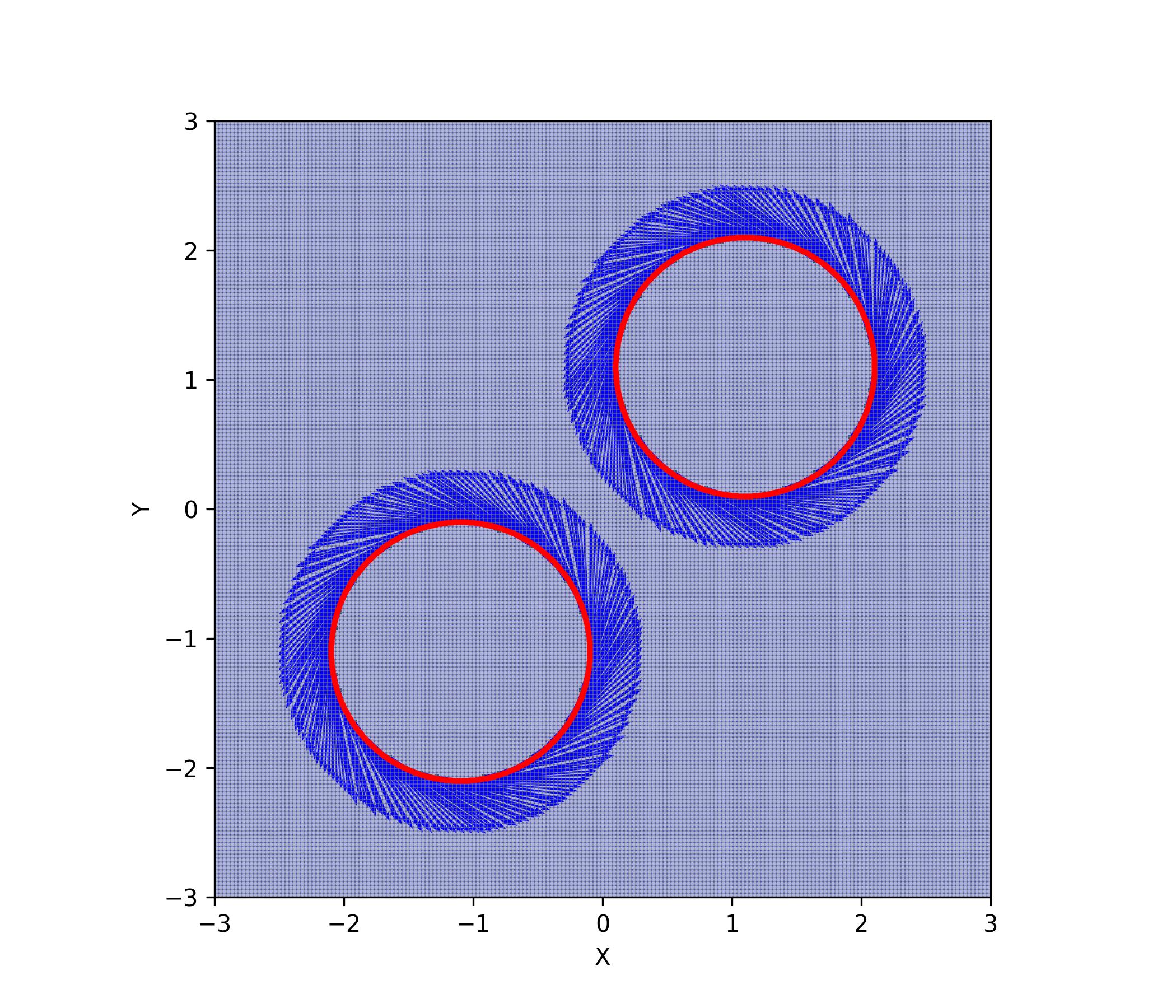}}
    \subfigure[]{\includegraphics[width=0.325\textwidth]{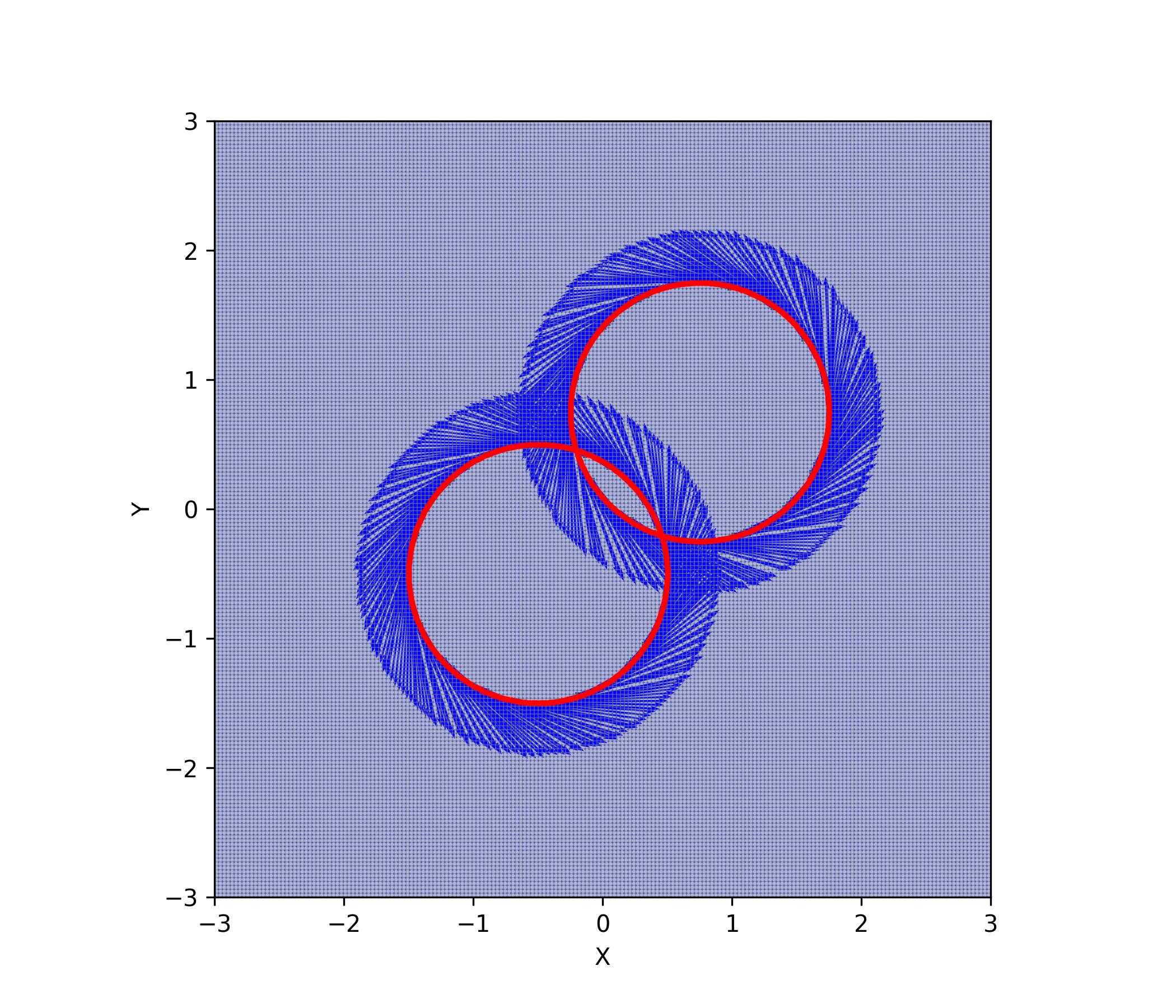}} \\
    \subfigure[]{\includegraphics[width=0.325\textwidth]{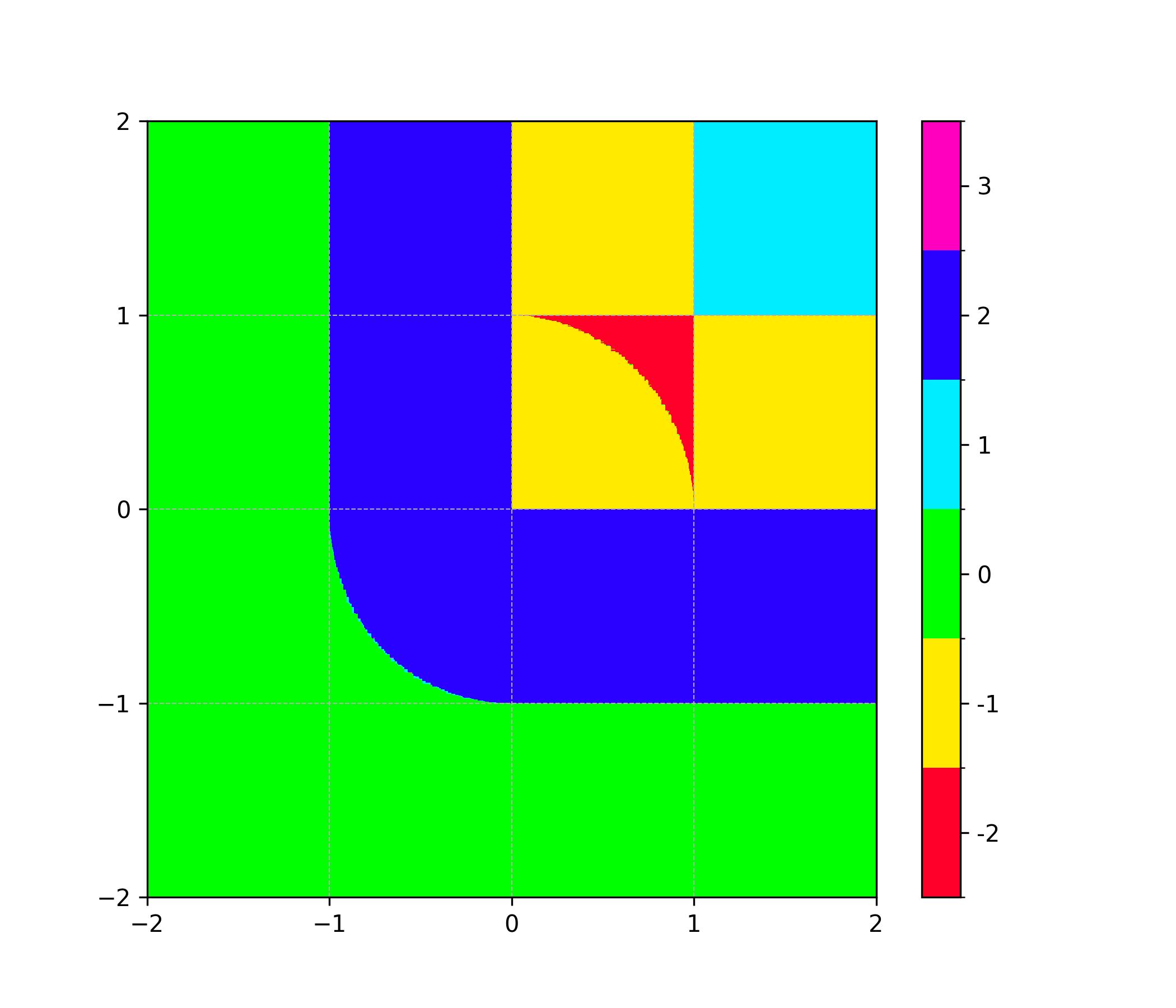}}
    \subfigure[]{\includegraphics[width=0.325\textwidth]{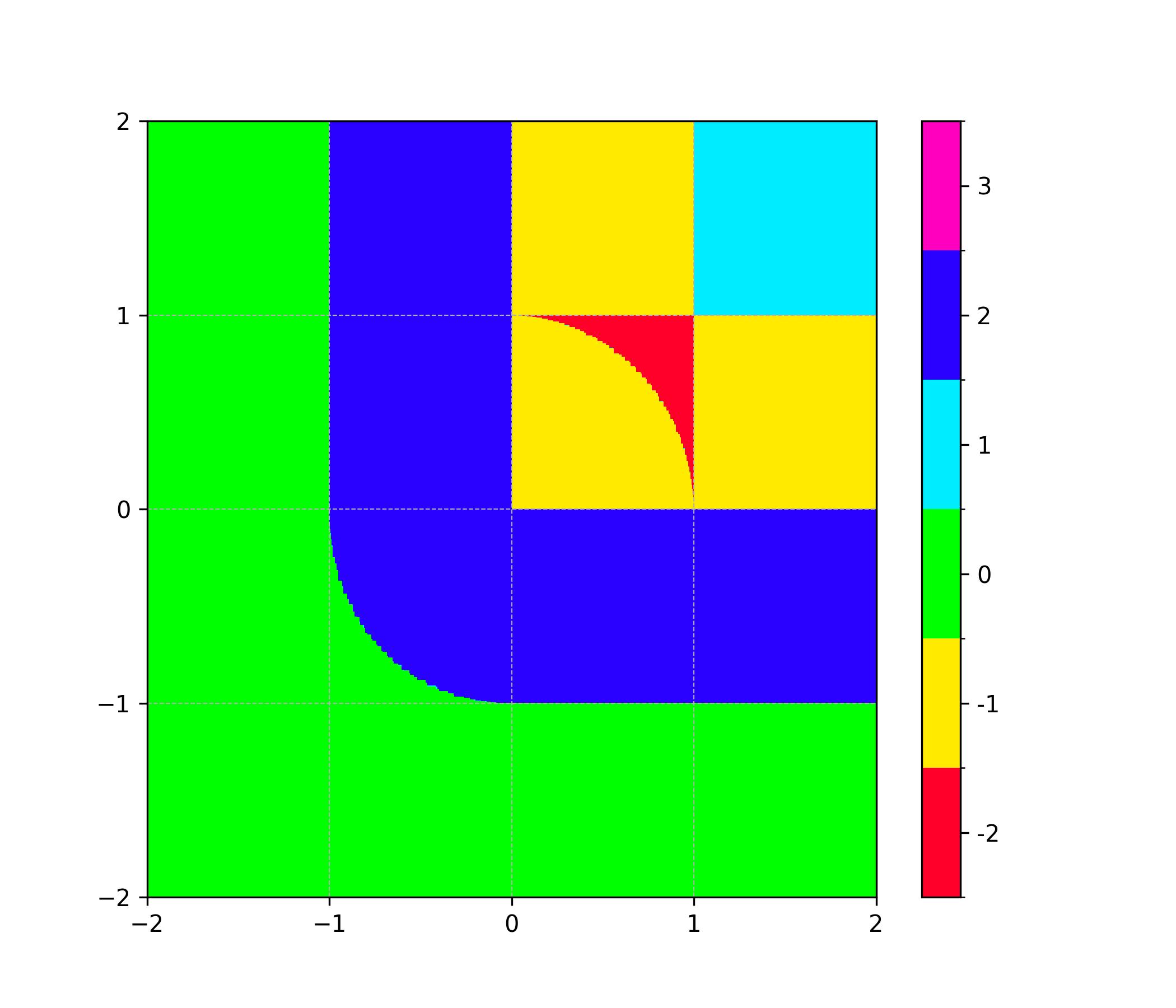}}
    \subfigure[]{\includegraphics[width=0.325\textwidth]{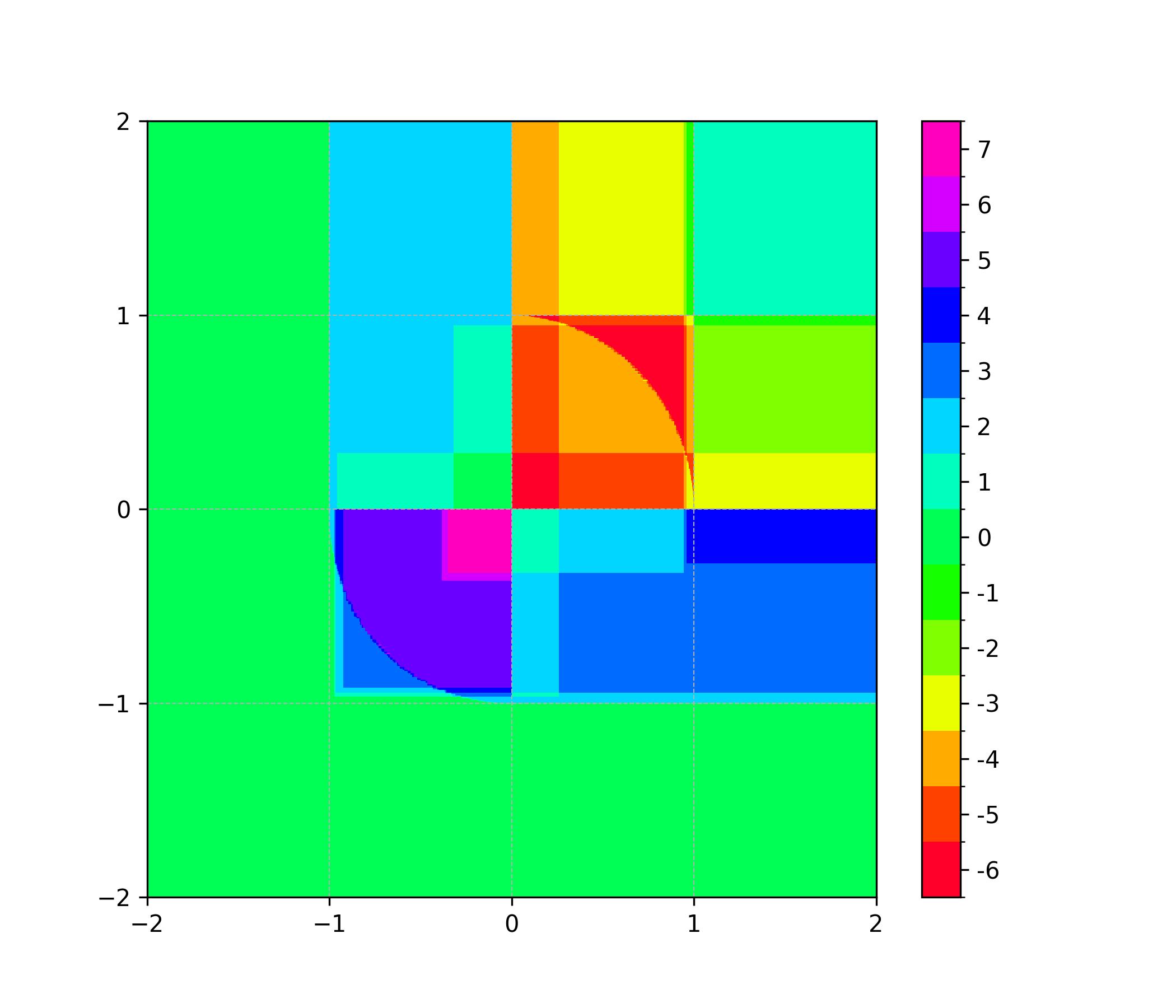}}
    \caption{Representative configurations of multiple periodic trajectories: (a) concentric circles, (b) disjoint circles, and (c) intersecting circles together with their associated tangent vector fields. Panels (d)-(f) present the corresponding Euler Characteristic Profiles (ECPs), demonstrating the sensitivity of the proposed descriptors to changes in the topology of the underlying geometric configuration}
    \label{fig:circle_vf_ECP}
\end{figure}

\subsubsection{DoD and BEPE of closed orbits}
For the DoD representation a circular kernel density estimate based on the von~Mises kernel with the concentration parameter $\kappa = 1$ is applied to the density on a uniform angular grid with $360$ bins. The resulting densities for the circle and blob are shown in Fig.~\ref{fig:circle_dod_bepe}(a) and (b), respectively. We observe that the resulting density is uniform, as all the directions are represented uniformly. 

In parallel, we construct the BEPE of the vector field by associating to each sampled vector a point in $\mathbb{R}^4$ of the form $(x,y,v_x,v_y)$, encoding both the position and direction of the vector. The resulting high-dimensional point cloud is subsequently projected to $\mathbb{R}^2$ using t-SNE with perplexity equal to $30$. The embedding obtained for the circular case is shown in Fig.~\ref{fig:circle_dod_bepe}b.
\begin{figure}[h]
    \centering
    \subfigure[]{\includegraphics[width=0.35\textwidth]{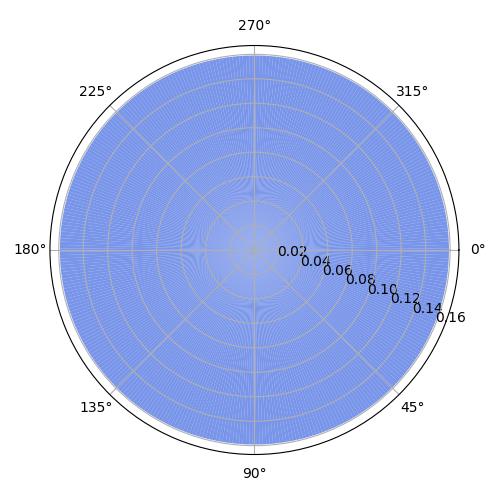}}
    \hspace{2cm}
    \subfigure[]{\includegraphics[width=0.32\textwidth]{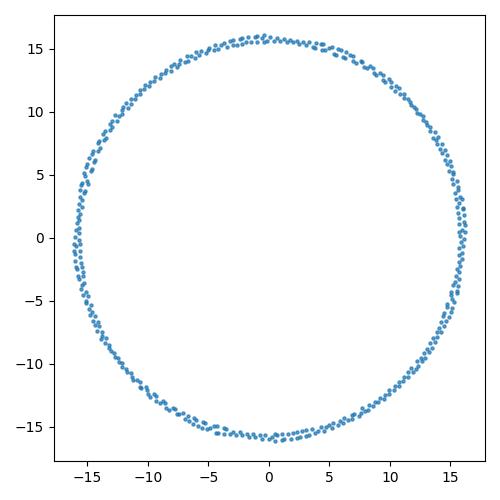}}\\
    \subfigure[]{\includegraphics[width=0.35\textwidth]{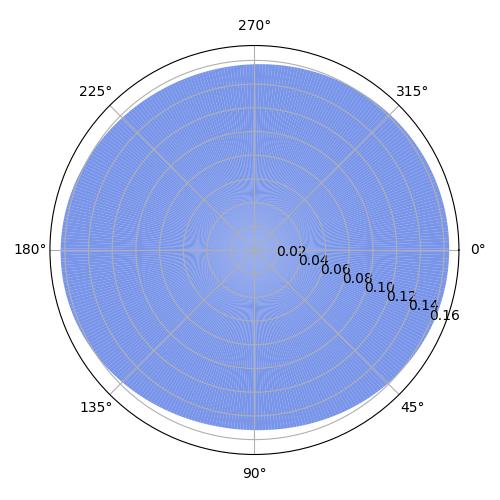}}
    \hspace{2cm}
    \subfigure[]{\includegraphics[width=0.32\textwidth]{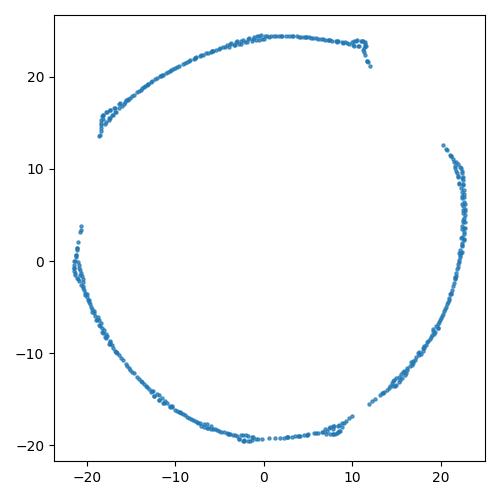}}\\
    \caption{Comparison of the proposed vector-field descriptors for representative periodic trajectories. Panels (a) and (c) show the Density of Directions (DoD) distributions for the circle and blob-shaped trajectory, respectively, while panels (b) and (d) present two-dimensional t-SNE visualizations of the corresponding four-dimensional Begin-End Point Embedding (BEPE)}
    
    \label{fig:circle_dod_bepe}
\end{figure}
A key observation is that, for simple closed curves such as circle/ ellips/ blob, both DoD and BEPE yield very similar representations. As a consequence, these methods do not effectively distinguish between geometrically different but directionally similar curves.
In contrast, the ECP better captures subtle geometric differences through topological changes in the filtration, allowing for a more discriminative description of the underlying structure. Even though the ECP for the circle and ellipse coincides, more complex deformations such as the blob produce noticeably richer profiles, reflecting the increased variability of tangent directions.
These observations highlight a fundamental difference between the methods. The DoD representation captures the global distribution of directions but discards spatial information, making it insensitive to geometric deformations that preserve directional statistics. The BEPE representation incorporates positional information, but for simple periodic structures this information remains highly redundant and does not significantly enhance discriminative power. In contrast, ECP encodes both geometric and topological features through the evolution of sublevel sets, providing a more informative signature.
Importantly, even for this relatively simple benchmark, ECP demonstrates superior discriminative capability compared to DoD and BEPE. This suggests that, for the purpose of analysing closed orbits in vector fields, ECP provides a more robust and informative representation. Consequently, in the remainder of this work, we focus primarily on the ECP-based analysis.

\subsection{Linear 2D Systems with a Stationary Point}
Linear two-dimensional systems with a stationary point form one of the basic classes of dynamical systems. Since they exhibit the principal local types of equilibria, including nodes, saddles and foci, they provide a natural testbed for the ECP. In this subsection, we examine to what extent ECP captures signatures of equilibrium type, encodes stability properties, and remains informative under perturbations.

Let us start with simple linear 2D autonomous systems (without external drive and noise). Precisely, we consider
\begin{equation}\label{eq:linear_2d}
\begin{aligned}
\dot{x} &= ax+by,\\
\dot{y} &= cx+dy,
\end{aligned}
\end{equation}
we will write
\begin{equation}
M:=\begin{bmatrix}
a &  b\\
c & d 
\end{bmatrix}\label{eq:matrix}
\end{equation}
to denote the matrix of the system. The properties of the linear 2D system~\ref{eq:linear_2d} are determined by the eigenvalues $\lambda_1$ and $\lambda_2$ and corresponding eigenvectors $v_1$ and $v_2$ of $M$. 
The systems can be classified into four canonical types:
\begin{itemize}
\item Stable node: $\lambda_1, \lambda_2 < 0$
\item Unstable node: $\lambda_1, \lambda_2 > 0$
\item Saddle point: $\lambda_1 \lambda_2 < 0$
\item Focus (stable/unstable): $\lambda_{1,2} \in \mathbb{C}, \ \mathrm{Im}(\lambda) \neq 0$
\end{itemize}
This classification depends on two invariants:
\[
\mathrm{tr}(A) = \lambda_1 + \lambda_2, \quad
\det(A) = \lambda_1 \lambda_2.
\]
Particular examples are given below.
\[
M_{1,1}= \begin{pmatrix}
-1.5 & 0 \\
0 & -2
\end{pmatrix} \quad \textrm{(node)}, \ \lambda_{1}= - 1.5, \lambda_{2}= - 2.
\]
Here  $(0,0)$ is a stable fixed point of the system. The eigenvectors $v_1=(1 , 0)$ and $v_2=(0, 1)$ corresponding, respectively, to $\lambda_1$ and $\lambda_2$. We sample the vector field restricted to $x \in [-2, 2]$ and $y \in [-2, 2]$ using $51 \times 51$ grid. We will consider two cases of the ECP computations: with and without vector normalization. The phase portrait and ECPs for these examples are shown in the Figure \ref{fig:sn_vf_ECP}. Note that with applying normalization we neglect the information on the magnitude of underlying vectors but without normalization vectors in the vicinity of fixed points (regardless of their stability) have very small vanishing magnitude which implies that both their geometrical coordinates are close to zero and thus fluctuations in vector field coordinates close to fixed points has little effect on the ECP computed which is not the necessarily the case once the normalization has been applied prior to ECP calculation. 

\begin{figure}[h]
    \centering
    \subfigure[]
    {\includegraphics[width=0.3\textwidth]{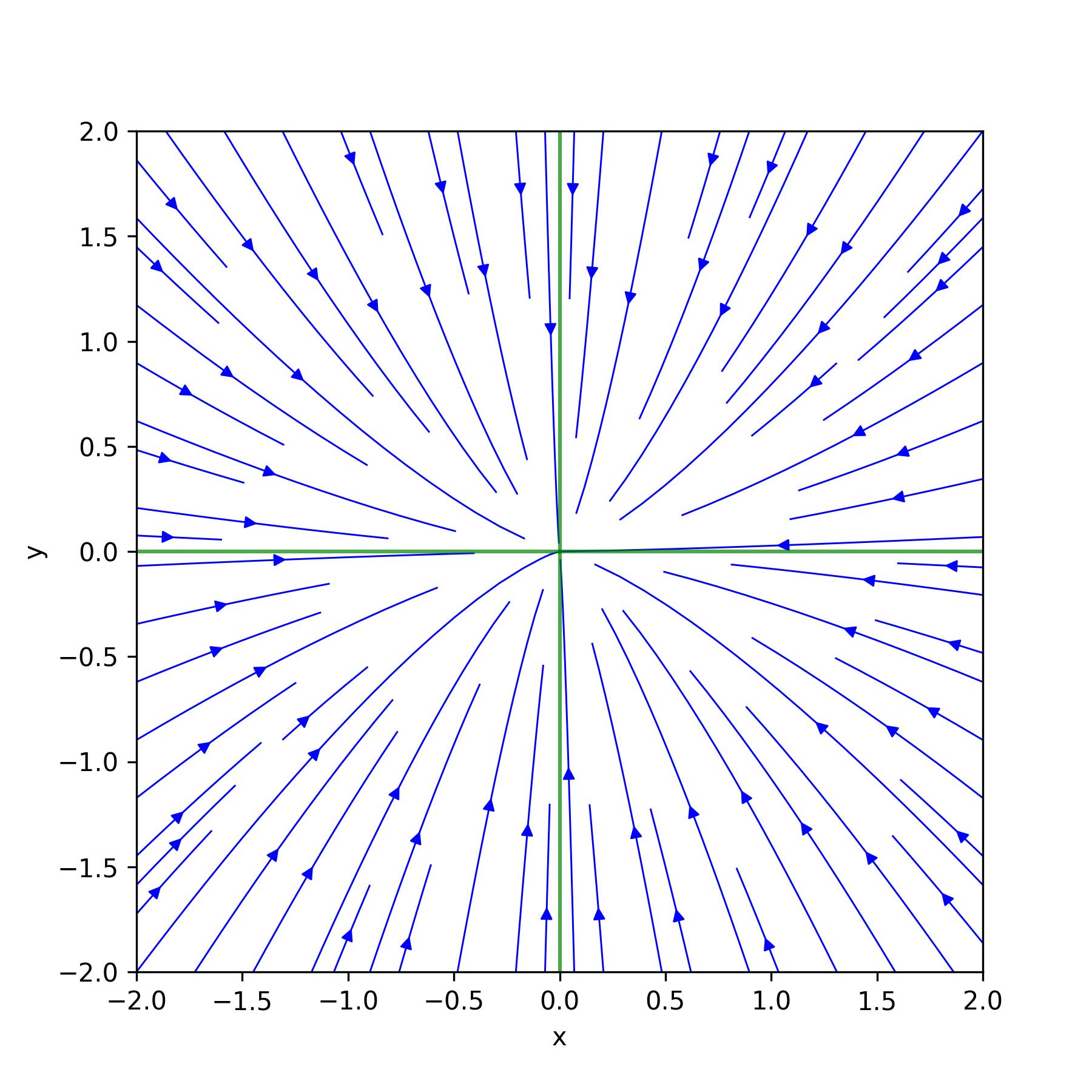}}
    \subfigure[]{\includegraphics[width=0.3\textwidth]{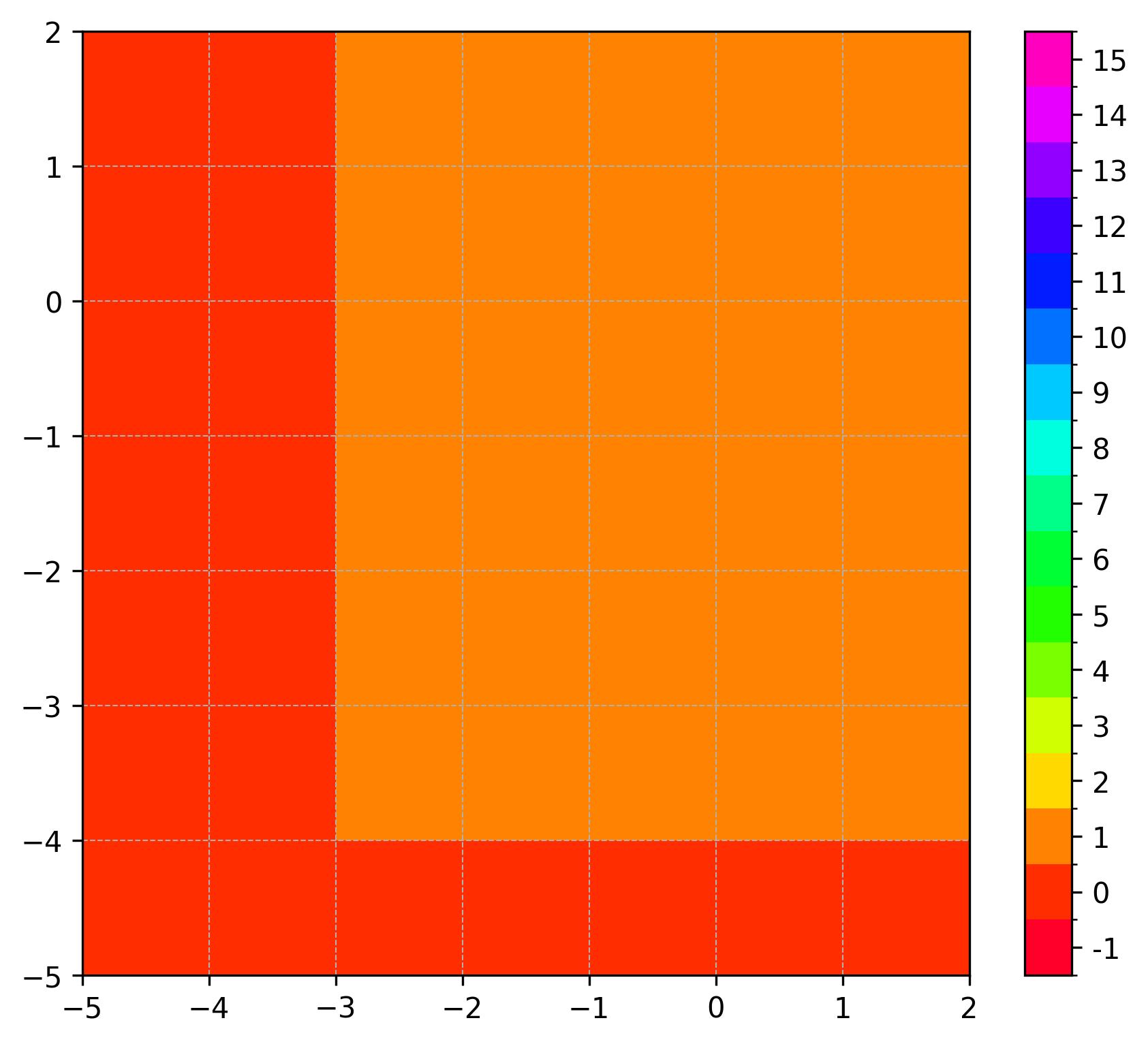}}
    \subfigure[]{\includegraphics[width=0.3\textwidth]{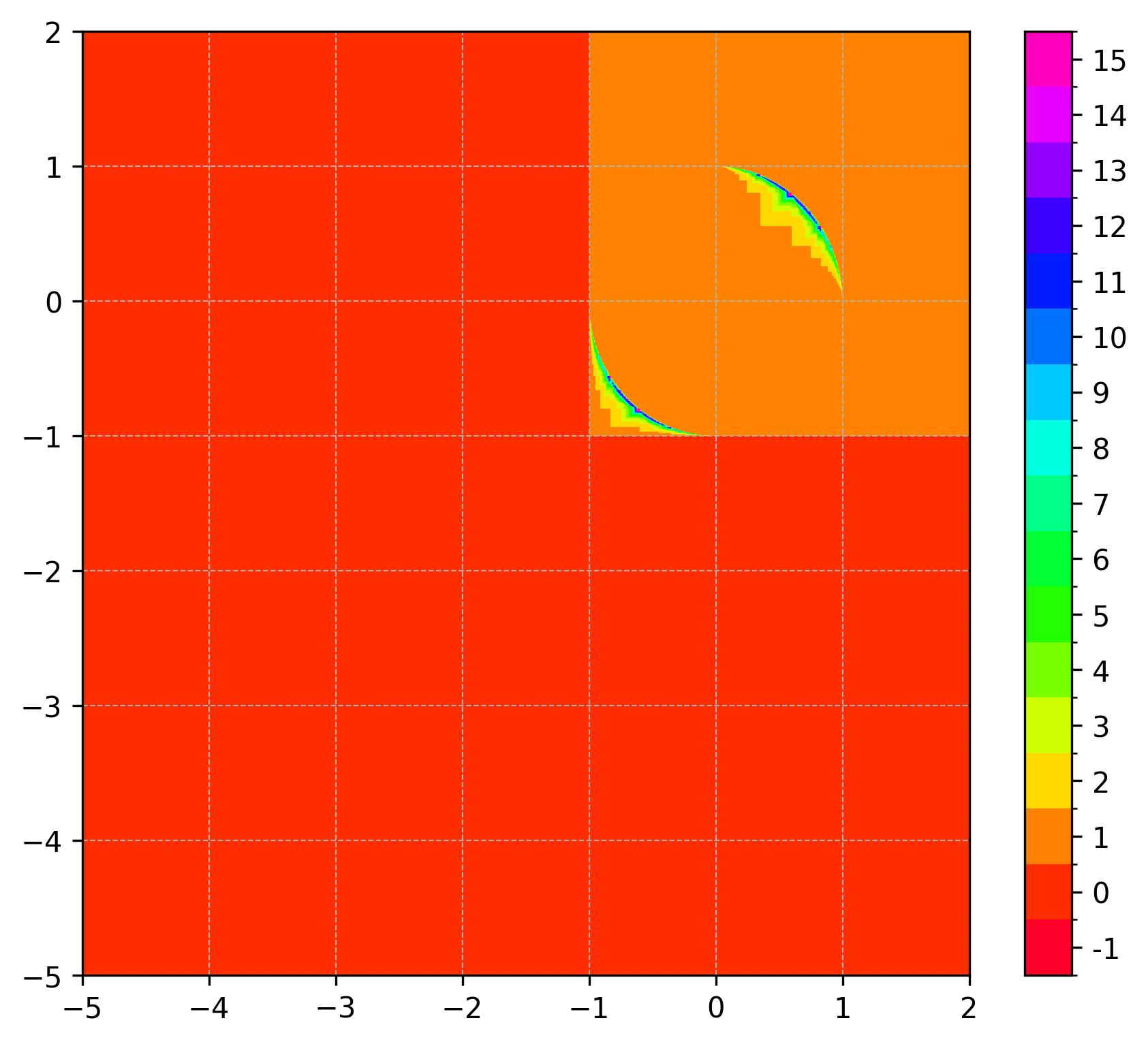}}
    \caption{Representative example of a stable node with eigenvalues $\lambda_1=-1.5$ and $\lambda_2=-2$: (a) phase portrait, (b) Euler Characteristic Profile computed from the original vector field, and (c) Euler Characteristic Profile obtained after vector normalization}
    \label{fig:sn_vf_ECP}
\end{figure}

The example in Figure~\ref{fig:sn_vf_ECP} already shows that the ECP provides a nontrivial signature even for the simplest linear systems with a stationary point. At the same time, it demonstrates that the resulting profile depends on how the vector field is represented in the filtration. Without normalization, the ECP reflects both direction and magnitude of the vectors, while after normalization it emphasizes directional structure and suppresses amplitude effects. This distinction will be important in the examples below, where we use ECP both to identify signatures of equilibrium type and to investigate which aspects of the dynamics remain visible under different choices of filtration.

Let us now focus on Fig. ~\ref{fig:sn_vf_ECP}(b). In the non-normalized case, one can clearly identify a distinguished \textit{corner point} \((x,y)\), equal in this example to \((-3,-4)\), at which the ECP changes from \(0\) to \(1\). Its location is not accidental: it is completely determined by the range of the domain and by the parameters of the system. More precisely, for diagonal linear systems, we obtain the following description.

\begin{lemma}
Consider the system~\eqref{eq:linear_2d} with \(b=c=0\), sampled on the rectangular domain
\[
D=[x_{\min},x_{\max}] \times [y_{\min},y_{\max}],
\]
and let \(a\) and \(d\) denote the nonzero diagonal entries of the matrix.
Then the associated ECP satisfies
\[
ECP(x,y)=
\begin{cases}
1, & x \ge x_{\mathrm{ecp}}, y \ge y_{\mathrm{ecp}},\\
0, & \text{otherwise},
\end{cases}
\]
where \(x_{\mathrm{ecp}}\) and \(y_{\mathrm{ecp}}\) are given by
\begin{align*}
    \text{if } a > 0 \text{ and } d > 0, & \quad x_{\mathrm{ecp}} = a\,x_{\min}, \quad y_{\mathrm{ecp}} = d\,y_{\min}, \\
    \text{if } a < 0 \text{ and } d < 0, & \quad x_{\mathrm{ecp}} = a\,x_{\max}, \quad y_{\mathrm{ecp}} = d\,y_{\max}, \\
    \text{if } a > 0 \text{ and } d < 0, & \quad x_{\mathrm{ecp}} = a\,x_{\min}, \quad y_{\mathrm{ecp}} = d\,y_{\max}, \\
    \text{if } a < 0 \text{ and } d > 0, & \quad x_{\mathrm{ecp}} = a\,x_{\max}, \quad y_{\mathrm{ecp}} = d\,y_{\min}.
\end{align*}
\end{lemma}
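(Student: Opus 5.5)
The plan is to treat the ECP in the statement as the continuous profile $\mathrm{cECP}_f$ of the map $f(x,y)=(ax,dy)$ on $D$, i.e. $ECP(u_1,u_2)=\chi(D^f_u)$ with
\[
D^f_u=\{(x,y)\in D : ax\le u_1,\ dy\le u_2\}.
\]
The sampled, grid-based profile is then handled by noting that on a fine grid the corner point differs by at most one grid spacing times $|a|$ or $|d|$. The key structural observation is that, because $b=c=0$, the two constraints decouple. Hence $D^f_u=I_1(u_1)\times I_2(u_2)$, where
\[
I_1(u_1)=\{x\in[x_{\min},x_{\max}]: ax\le u_1\},\qquad
I_2(u_2)=\{y\in[y_{\min},y_{\max}]: dy\le u_2\}.
\]
Each factor is the intersection of a closed interval with a closed half-line, so it is either empty or a nonempty closed interval.

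First I would use multiplicativity of $\chi$, or simply convexity. If either factor is empty, the product is empty and $\chi=0$. Otherwise the product is a nonempty compact convex rectangle, hence contractible, and $\chi=1$. This already shows that the ECP takes only the values $0$ and $1$, in agreement with the remark preceding Case 1 in Subsection~\ref{General}. It also shows that $ECP(u)=1$ exactly when $I_1(u_1)\neq\emptyset$ and $I_2(u_2)\neq\emptyset$.

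Second, I would determine when each factor is nonempty. $I_1(u_1)\neq\emptyset$ iff $u_1\ge\min_{x\in[x_{\min},x_{\max}]}ax$. Since $x\mapsto ax$ is linear, this minimum is $ax_{\min}$ if $a>0$ and $ax_{\max}$ if $a<0$. The same holds for $d$ with $y_{\min},y_{\max}$. Combining the sign cases for $a$ and $d$ gives exactly the four listed formulas for $(x_{\mathrm{ecp}},y_{\mathrm{ecp}})$. Geometrically, this is Case 1 of Subsection~\ref{General}: the image $f(D)$ is an axis-parallel rectangle, so a single vertex minimizes both coordinates simultaneously.

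Third, for the discrete statement, I would check Definition~\ref{def:vector_field_multifiltration}. Filtration values on top cells are $(ac_x,dc_y)$, and lower-dimensional cells receive coordinatewise minima over their incident cubes. The sublevel complex $K_u$ consists of the cubes whose barycenter columns satisfy $ac_x\le u_1$ and whose rows satisfy $dc_y\le u_2$, together with their faces. Because the constraints separate by coordinate, this is a product of a run of consecutive columns with a run of consecutive rows, giving a solid rectangle of cubes. The face cells added by the min-extension lie on this rectangle's boundary or adjacent, and I expect the main obstacle to be verifying that they do not create extra components. Lower-dimensional cells entering earlier could in principle appear as isolated edges or vertices. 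I would argue that a cell's value is at most the value of some incident cube in each coordinate, but not necessarily of one common cube. Separability rescues this: the minimizing column and the minimizing row give an incident cube in $K_u$ only if both runs are nonempty. In that case the extra faces are attached to the rectangle, so the complex is collapsible and $\chi=1$. Otherwise nothing enters, except possibly boundary faces when exactly one run is nonempty, which I would rule out by the same product argument. The corner then equals the minimal barycenter values, which converge to the stated $(x_{\mathrm{ecp}},y_{\mathrm{ecp}})$ as the grid is refined, or coincide with them if the samples are taken at vertices.
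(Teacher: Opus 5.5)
Your proof is correct and follows the reasoning the paper relies on. The paper gives no separate proof; it appeals to Case~1 of Subsection~\ref{General}, where for diagonal $M$ the image $f(D)$ is an axis-parallel rectangle with one vertex minimizing both coordinates, and to the later observation that sublevel sets of a linear map on a convex domain are convex and so have $\chi\in\{0,1\}$. Those two ingredients are exactly your product-of-intervals step and your endpoint-minimum step.

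Your cubical verification goes beyond the paper, and its crux can be stated more cleanly than in your draft. The top cubes incident to any cell form a product block (columns times rows), so each cell's min-extended value is attained by a single incident cube. Hence $K_u$ is either empty or the closure of a solid rectangle of cubes. With grid points at the domain endpoints, as in the paper's convention, the corner is exactly $(x_{\mathrm{ecp}},y_{\mathrm{ecp}})$.
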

\begin{remark}
The four sign configurations of \(a\) and \(d\) correspond to the basic diagonal linear systems: sources, sinks and saddles. In particular, the location of the corner point \((x_{\mathrm{ecp}},y_{\mathrm{ecp}})\) is determined by selecting, in each coordinate, the extreme value of the sampled domain from which the corresponding component of the vector field attains its minimal value. Thus, the ECP of such a system is completely determined by the signs of \(a\) and \(d\) together with the bounding box of the sampled domain.
\end{remark}

Let us now consider additional examples of two-dimensional linear systems. The corresponding system matrices together with their phase portraits are presented in Fig.~\ref{fig:linear_examples} (note that in each phase portrait, the nullclines corresponding to $\dot{x}=0$ and $\dot{y}=0$ are indicated in green). For those examples we compare several feature representations for multifiltration:

\begin{enumerate}
\item raw vector field samples,
\item vector field augmented with orientation angle,
\item vector field augmented with curl,
\item vector field augmented with divergence,
\item eigenvalues of the Jacobian matrix.
\end{enumerate}
Note that the eigenvalues are used as filtration coordinates by converting to real values by considering a four-valued filtration: $(\operatorname{Re}\lambda_1,\operatorname{Im}\lambda_1,\operatorname{Re}\lambda_2,\operatorname{Im}\lambda_2)$.

For all feature representations, we assume access only to discrete samples of the vector field on a regular grid. The required first-order spatial derivatives are approximated numerically from the sampled field values, yielding a pointwise estimate of the Jacobian matrix. The divergence, curl and Jacobian eigenvalues are subsequently computed at each grid point from these numerical derivatives, while the orientation angle is obtained directly from the local vector components. Therefore, the procedure does not require analytical derivatives nor prior knowledge of the governing equations, making it readily applicable to general sampled vector field data.


\newpage
\begin{figure*}[h!]
\centering
\begin{center}
\begin{minipage}[t]{0.5\linewidth}
\centering
\includegraphics[width=0.78\linewidth]{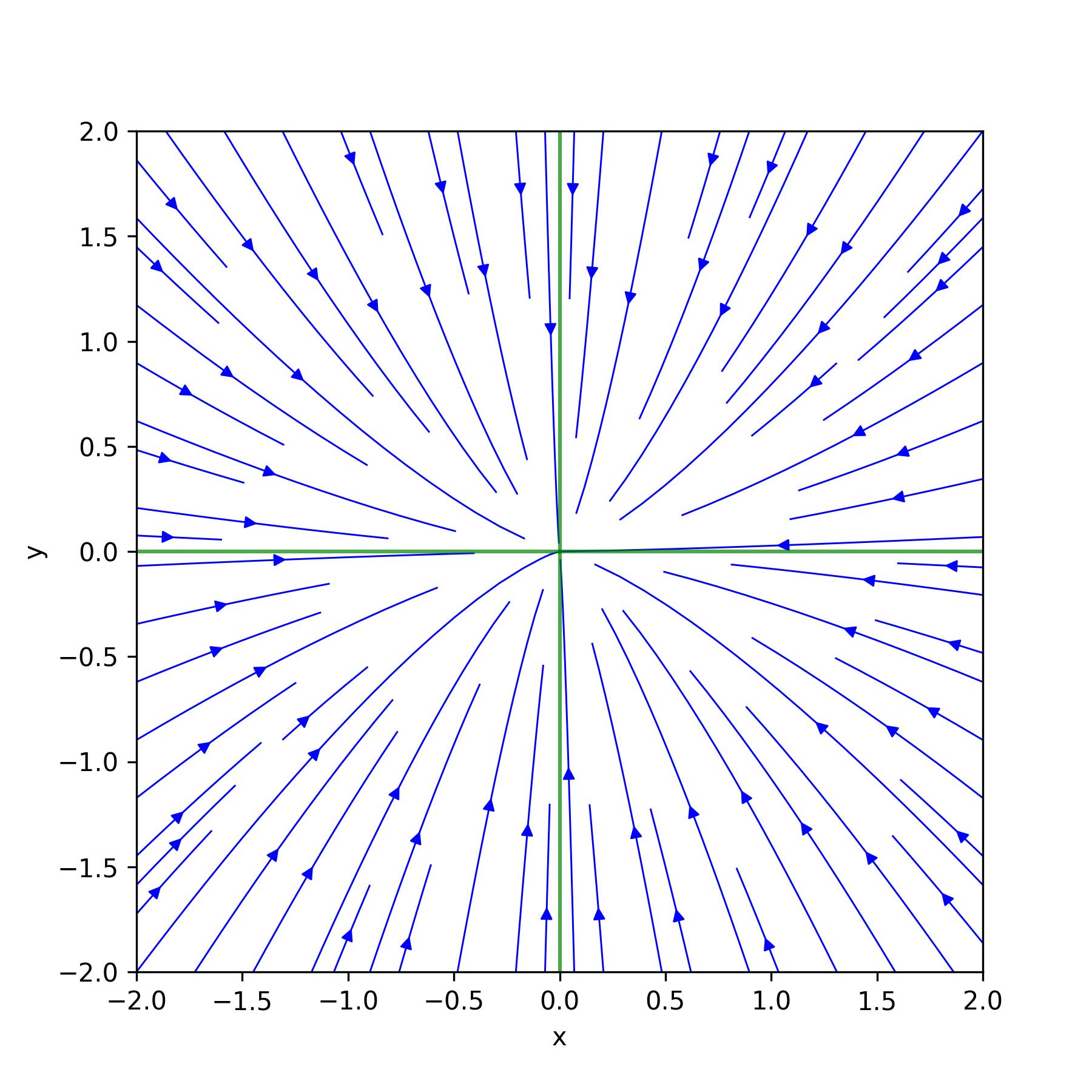}
\\
\(
M_{1,1}= 
\begin{pmatrix}-1.5 & 0.0 \\ 0.0 & -2.0\end{pmatrix} 
\quad \textrm{(stable node 1)}
\)
\end{minipage}%
\hfill
\begin{minipage}[t]{0.5\linewidth}
\centering
\includegraphics[width=0.78\linewidth]{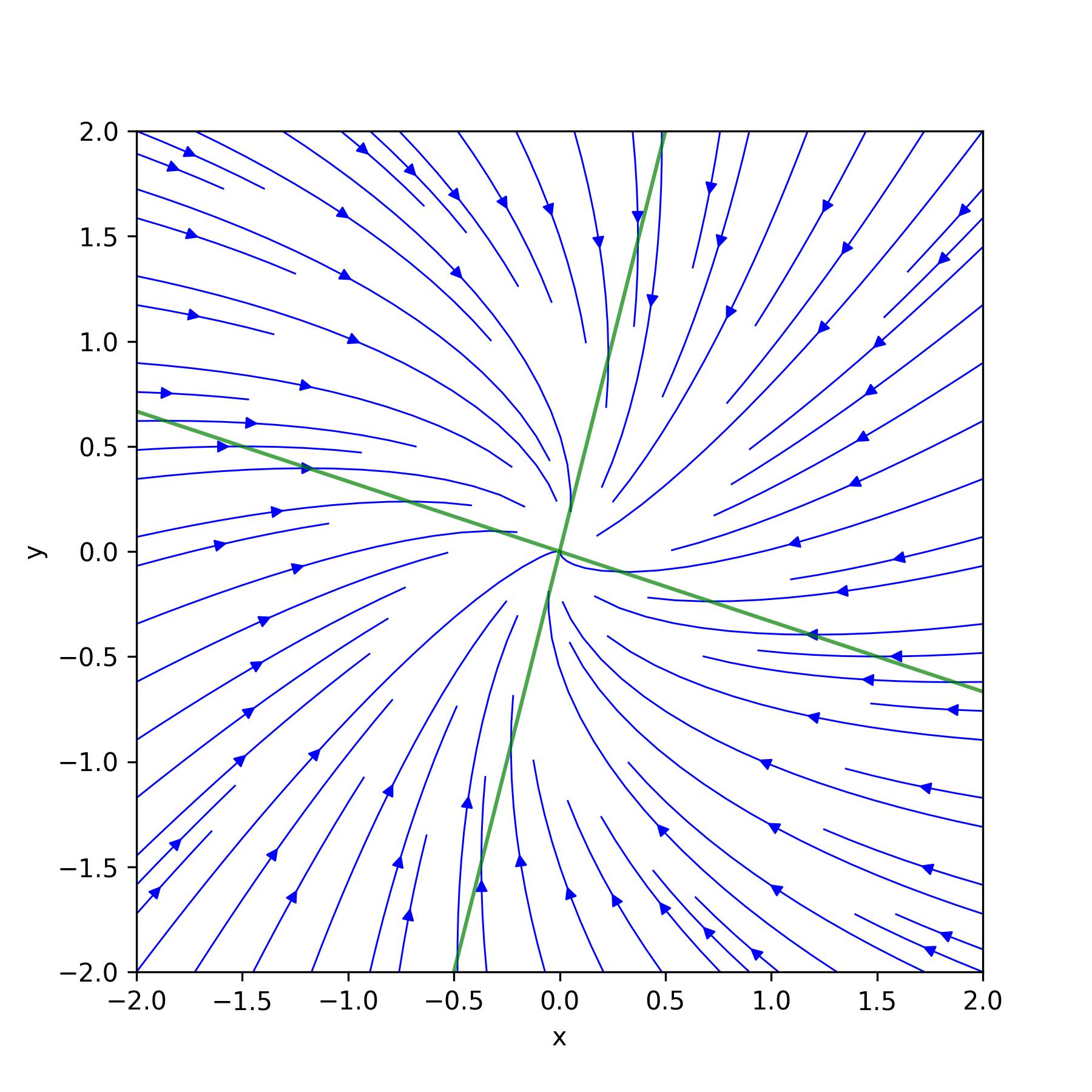}
\\
\(
M_{1,2}:=
\begin{pmatrix}-2.0 & 0.5 \\ -0.5 & -1.5\end{pmatrix} 
\quad \textrm{(stable node 2)}
\)
\end{minipage}


\begin{minipage}[t]{0.5\linewidth}
\centering
\includegraphics[width=0.78\linewidth]{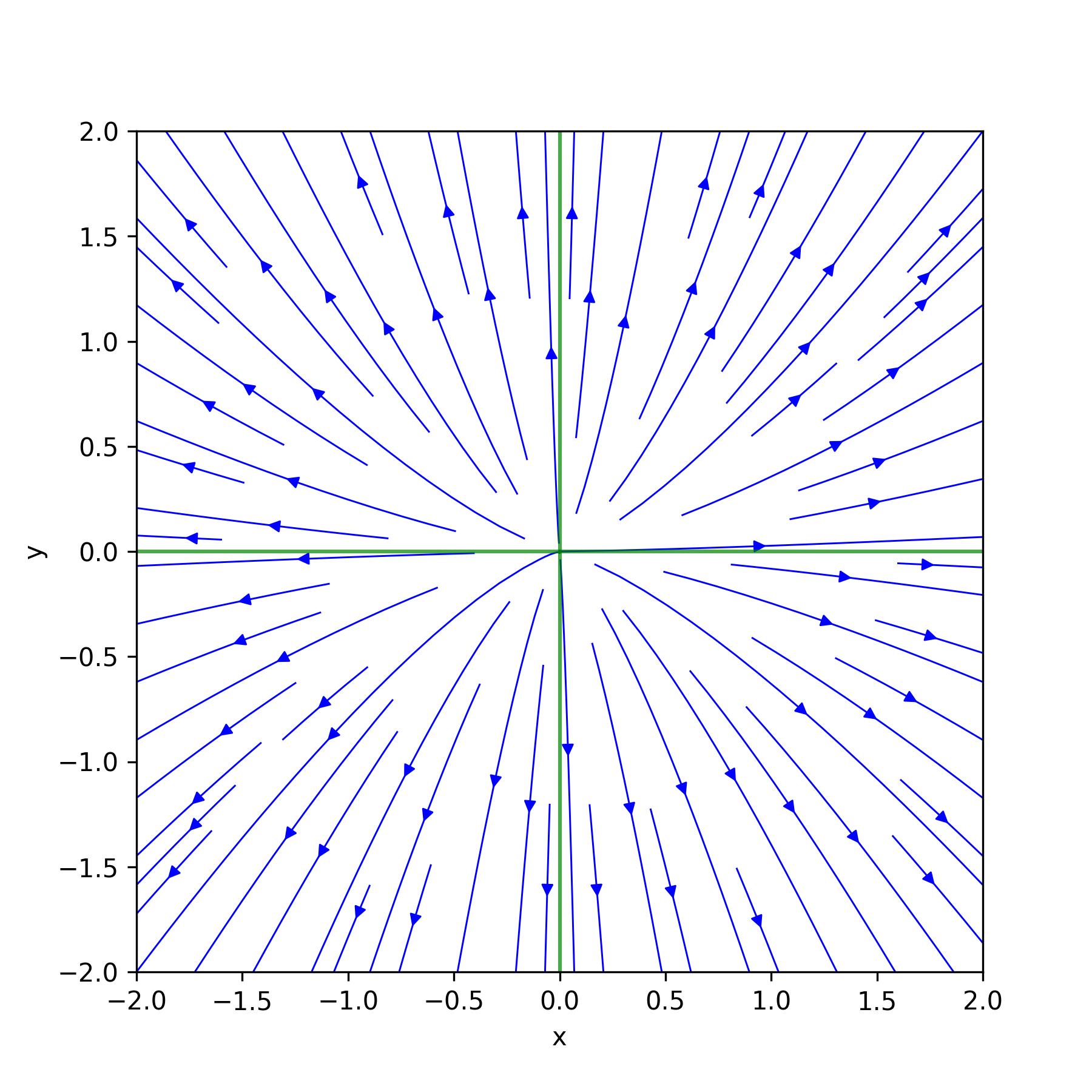}
\\
\(
M_{1,3}:=
\begin{pmatrix}1.5 & 0.0 \\ 0.0 & 2.0\end{pmatrix} 
\quad \textrm{(unstable node 1)}
\)
\end{minipage}%
\hfill
\begin{minipage}[t]{0.5\linewidth}
\centering
\includegraphics[width=0.78\linewidth]{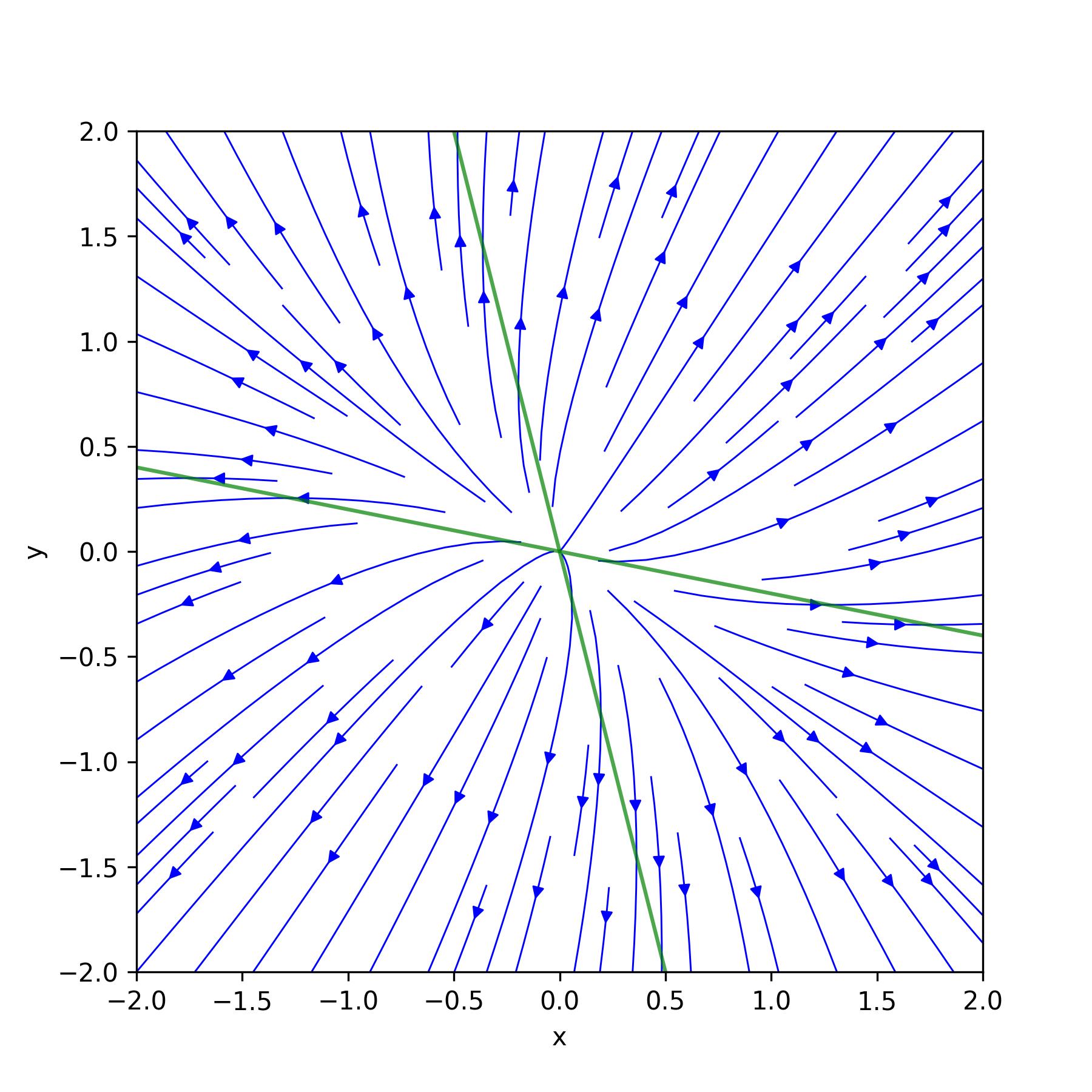}
\\
\(
M_{1,4}:=
\begin{pmatrix}2.0 & 0.5 \\ 0.5 & 2.5\end{pmatrix} 
\quad \textrm{(unstable node 2)}
\)
\end{minipage}


\begin{minipage}[t]{0.5\linewidth}
\centering
\includegraphics[width=0.78\linewidth]{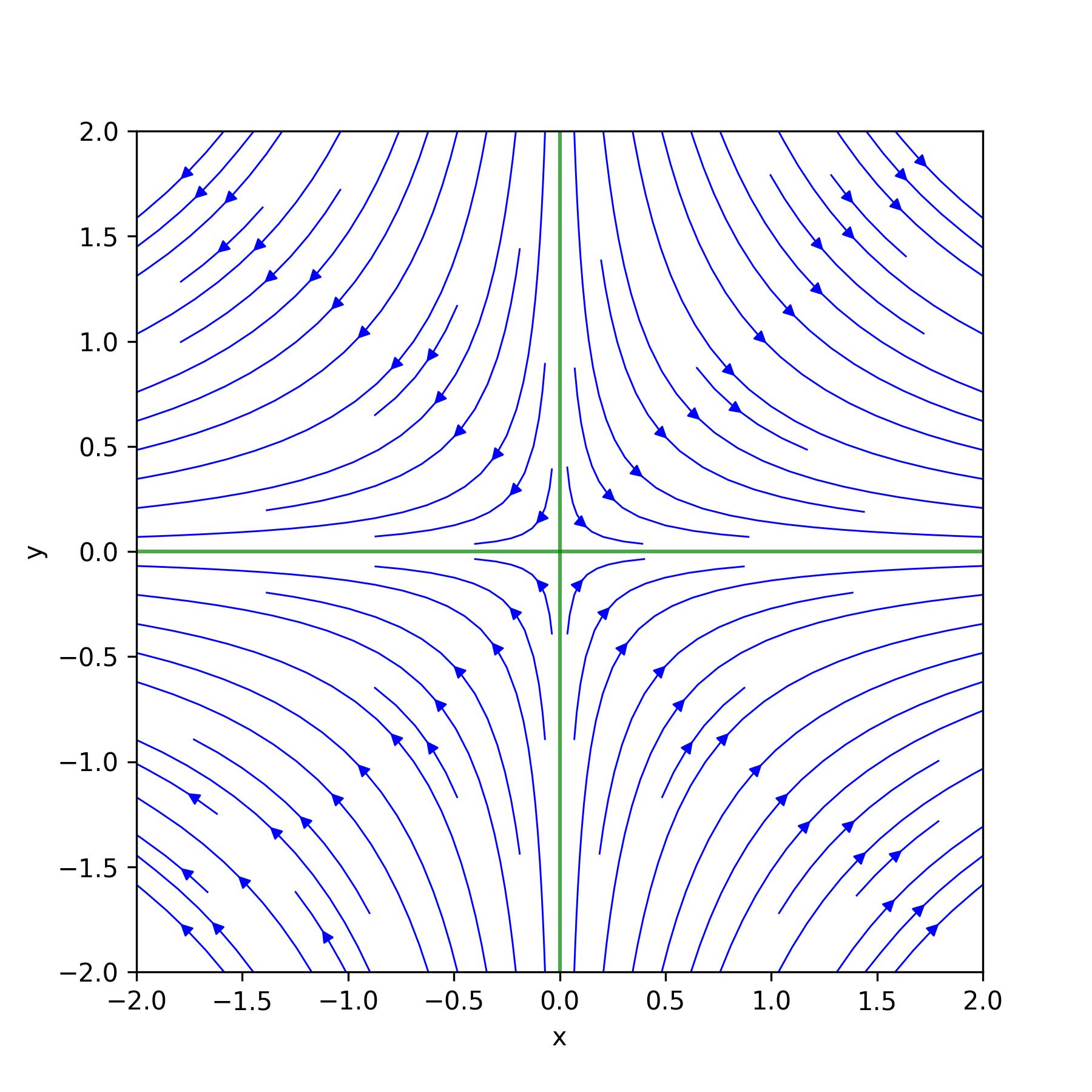}
\\
\(
M_{1,5}:=
\begin{pmatrix}1.0 & 0.0 \\ 0.0 & -1.0\end{pmatrix} 
\quad \textrm{(saddle 1)}
\)
\end{minipage}%
\hfill
\begin{minipage}[t]{0.5\linewidth}
\centering
\includegraphics[width=0.78\linewidth]{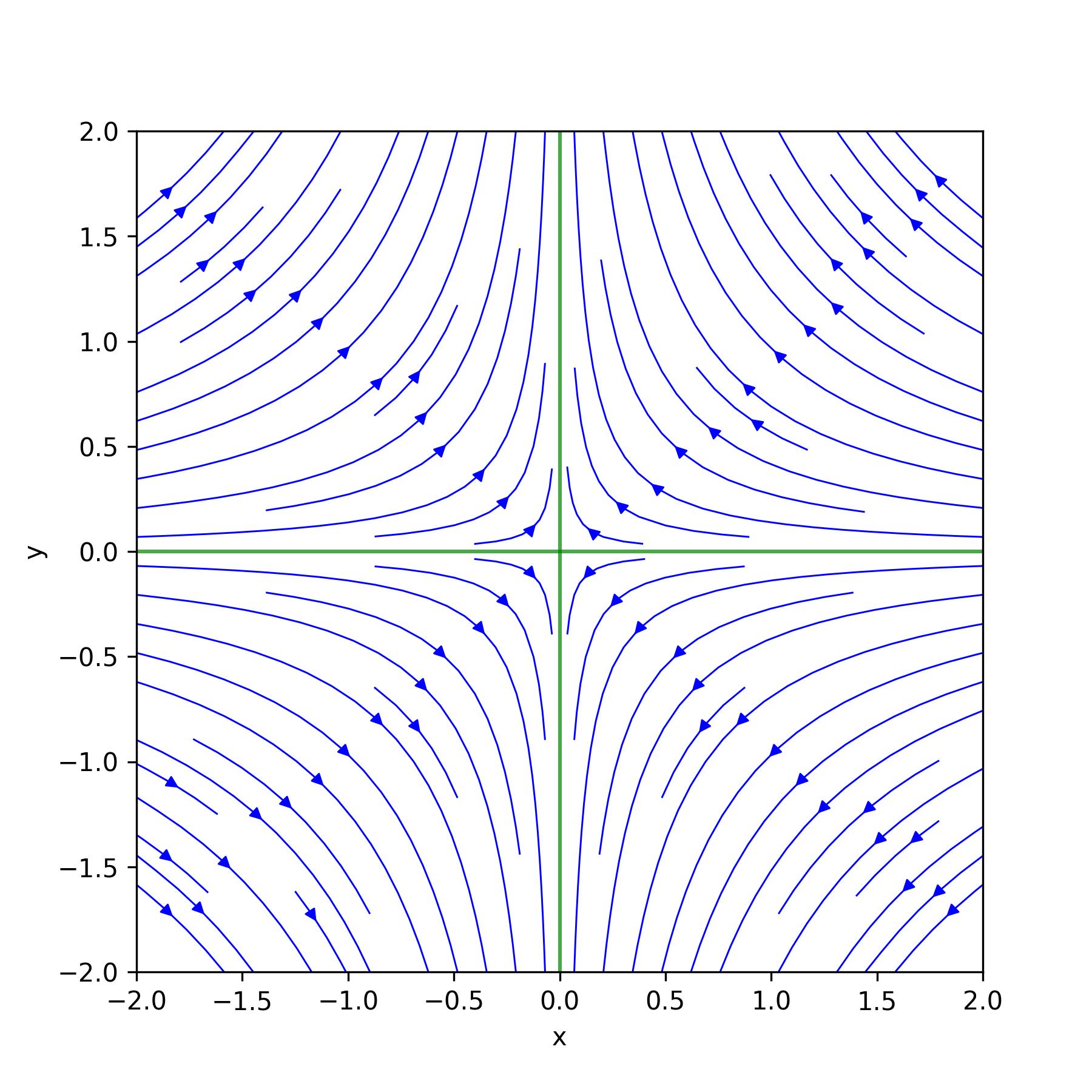}
\\
\(
M_{1,6}:=
\begin{pmatrix}-1.0 & 0.0 \\ 0.0 & 1.0\end{pmatrix} 
\quad \textrm{(saddle 2)}
\)
\vspace{5em}
\end{minipage}
\end{center}

\newpage

\end{figure*}
\begin{figure*}[h!]
\centering
\begin{center}
\begin{minipage}[t]{0.5\linewidth}
\centering
\includegraphics[width=0.78\linewidth]{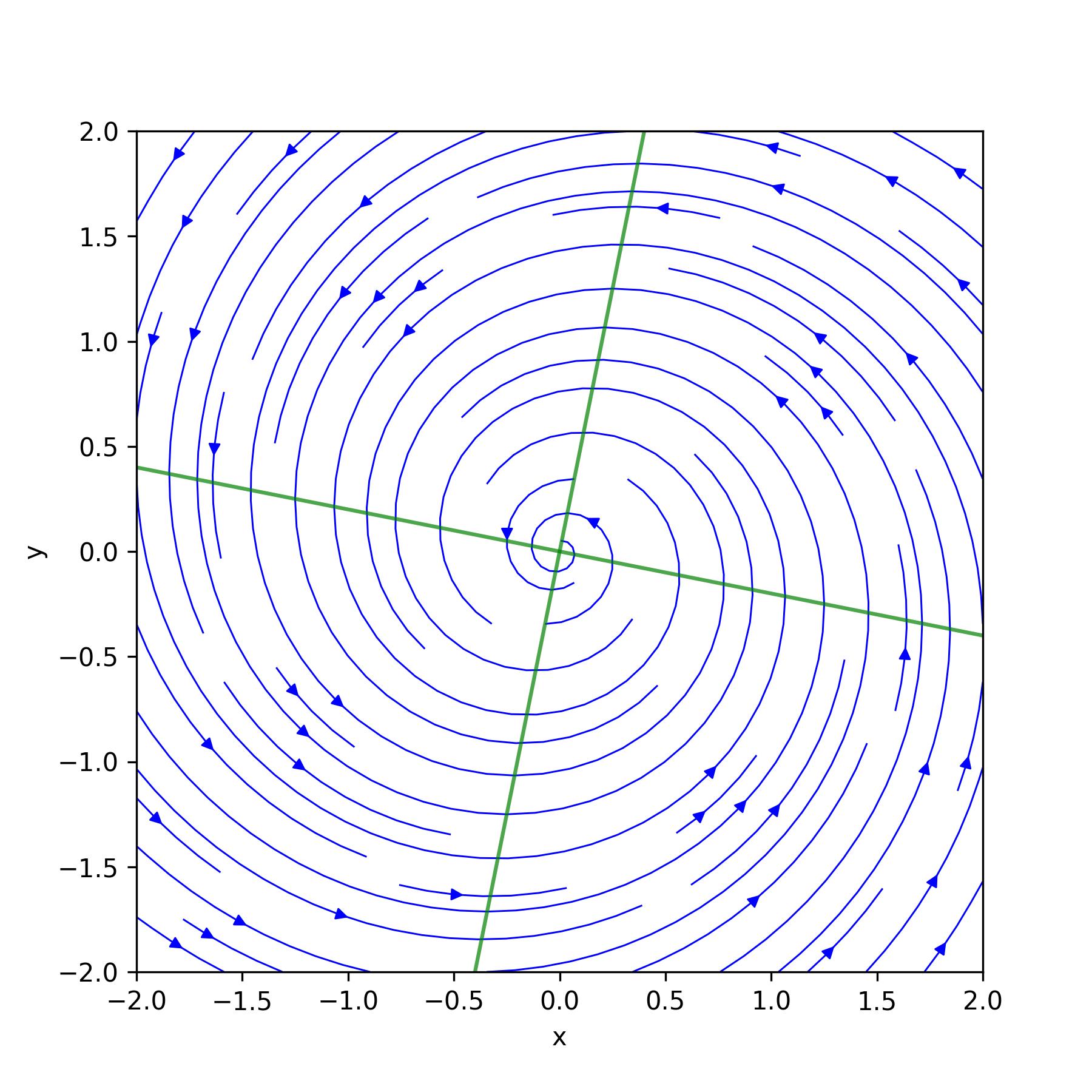}
\\
\(
M_{1,7}:=
\begin{pmatrix}-1.0 & -5.0 \\ 5.0 & -1.0\end{pmatrix} 
\quad \textrm{(stable focus 1)}
\)
\end{minipage}%
\hfill
\begin{minipage}[t]{0.5\linewidth}
\centering
\includegraphics[width=0.78\linewidth]{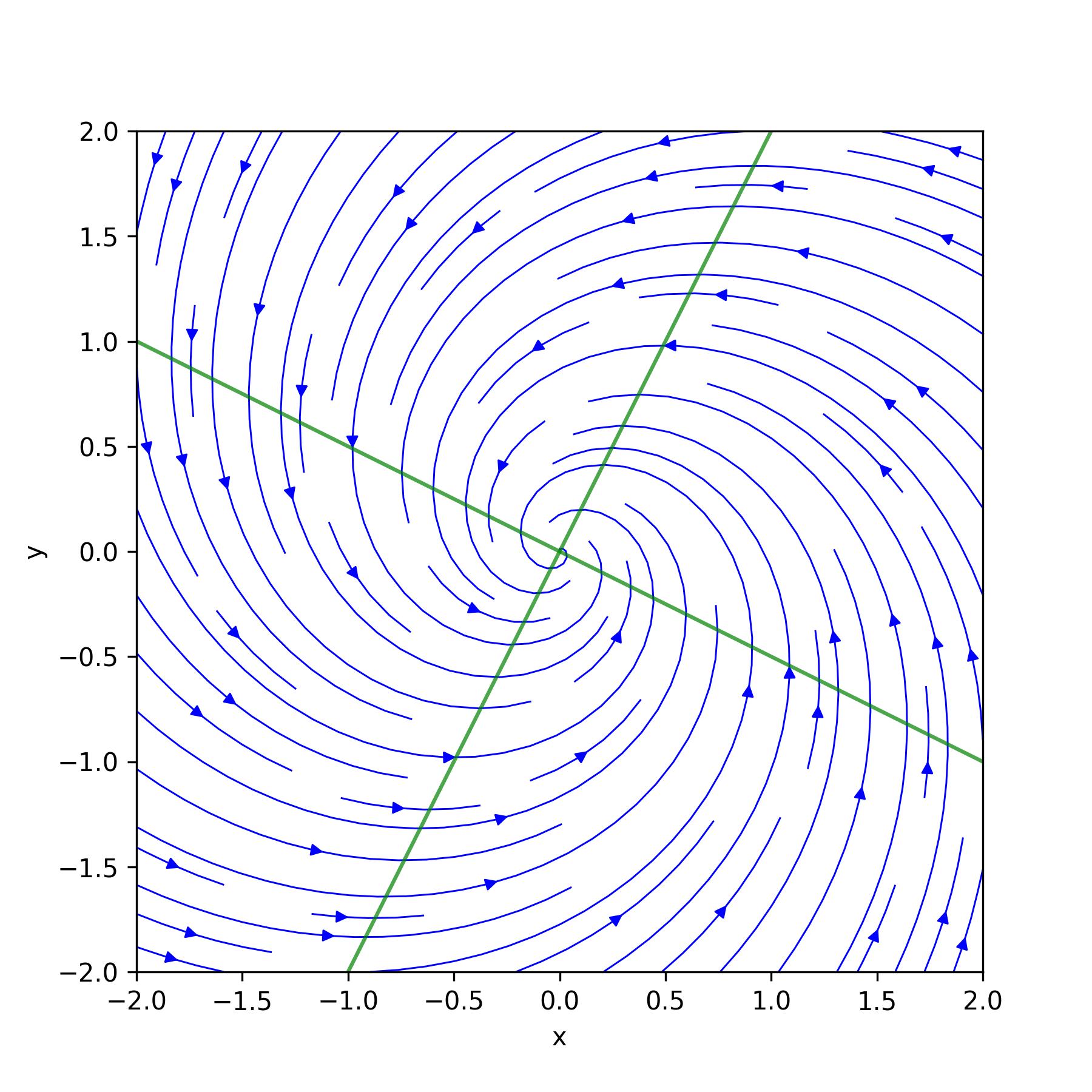}
\\
\(
M_{1,8}:=
\begin{pmatrix}-2.0 & -4.0 \\ 4.0 & -2.0\end{pmatrix} 
\quad \textrm{(stable focus 2)}
\)
\end{minipage}


\begin{minipage}[t]{0.5\linewidth}
\centering
\includegraphics[width=0.78\linewidth]{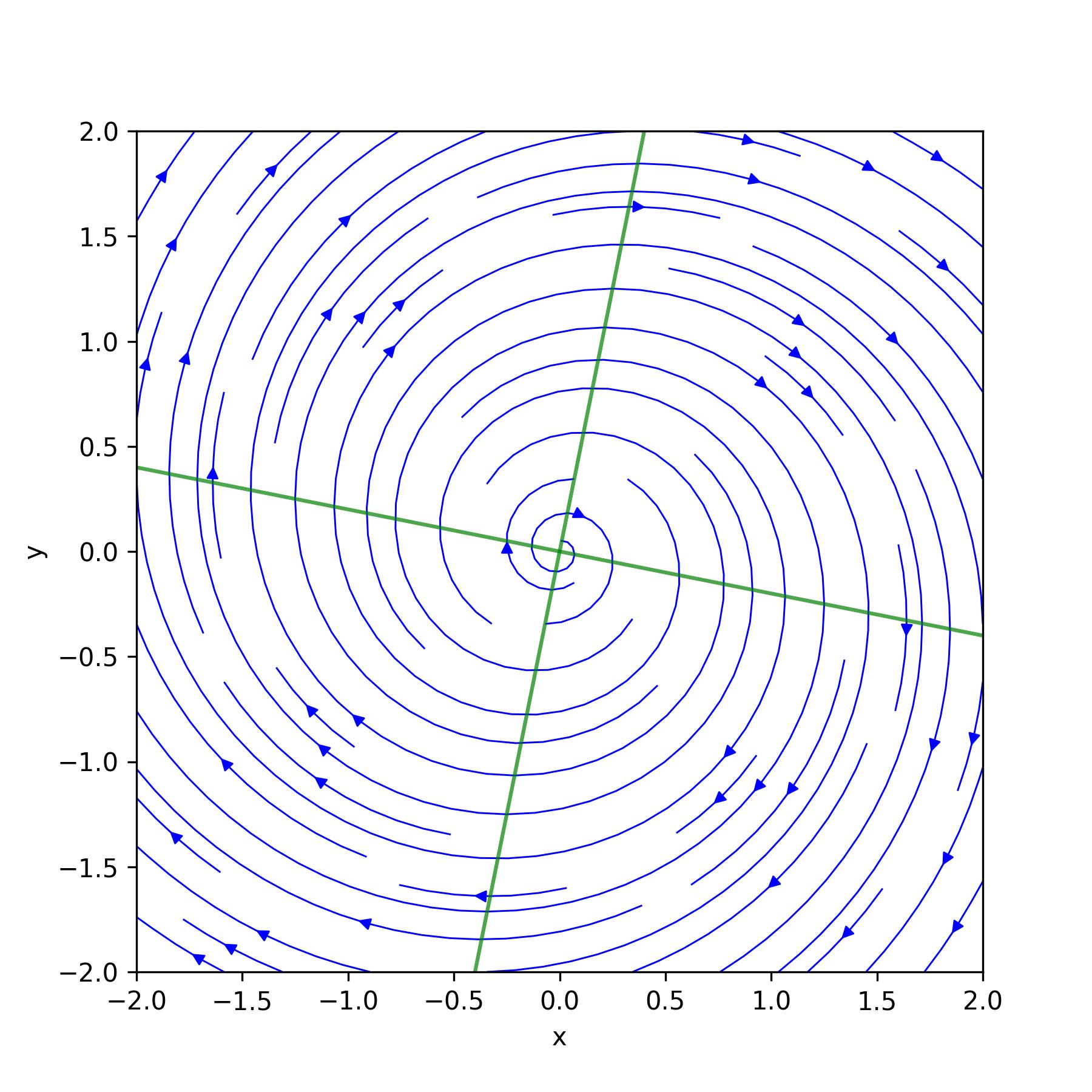}
\\
\(
M_{1,9}:=
\begin{pmatrix}1.0 & 5.0 \\ -5.0 & 1.0\end{pmatrix} 
\quad \textrm{(unstable focus 1)}
\)
\end{minipage}%
\hfill
\begin{minipage}[t]{0.5\linewidth}
\centering
\includegraphics[width=0.78\linewidth]{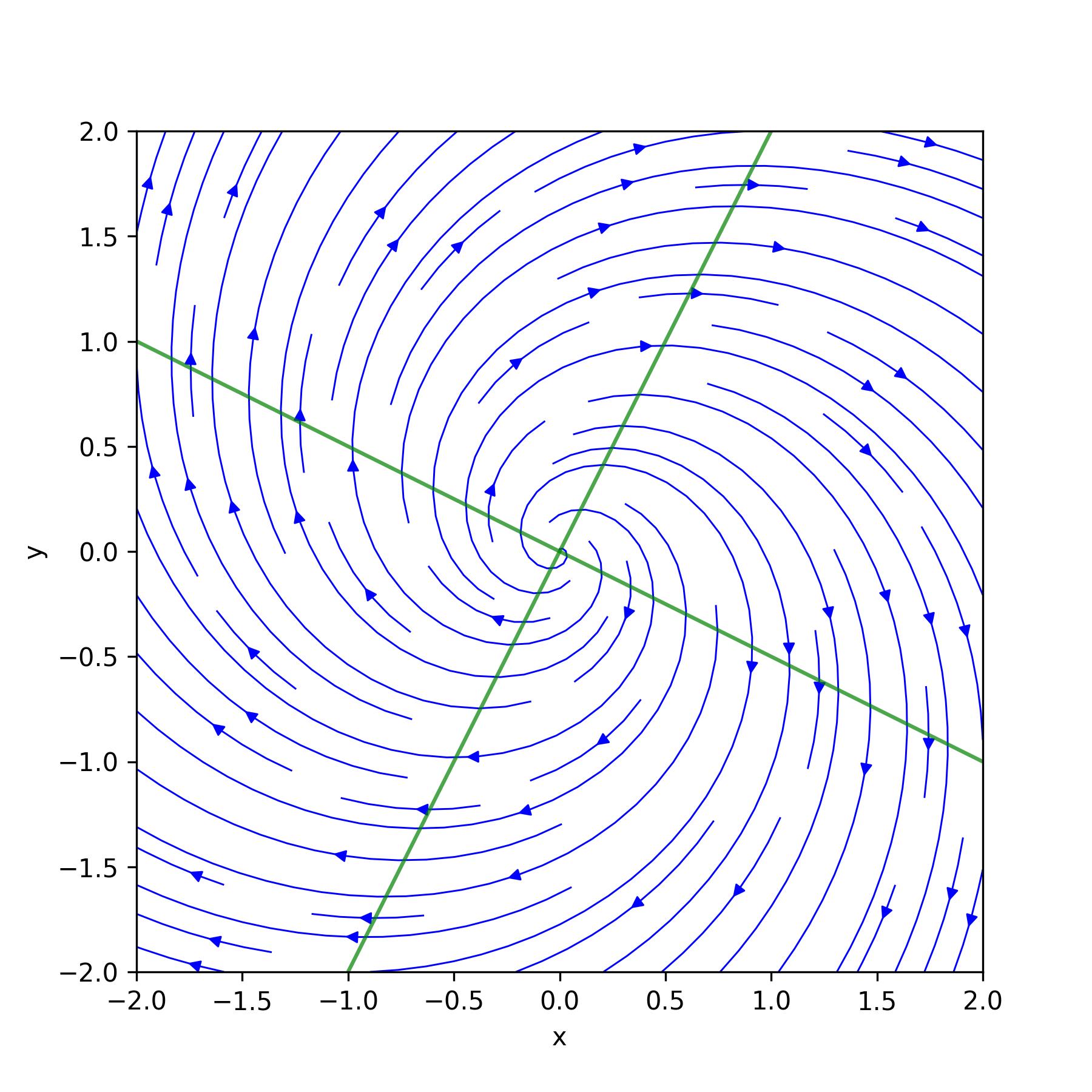}
\\
\(
M_{1,10}:=
\begin{pmatrix}2.0 & 4.0 \\ -4.0 & 2.0\end{pmatrix} 
\quad \textrm{(unstable focus 2)}
\)
\end{minipage}


\begin{minipage}[t]{0.5\linewidth}
\centering
\includegraphics[width=0.78\linewidth]{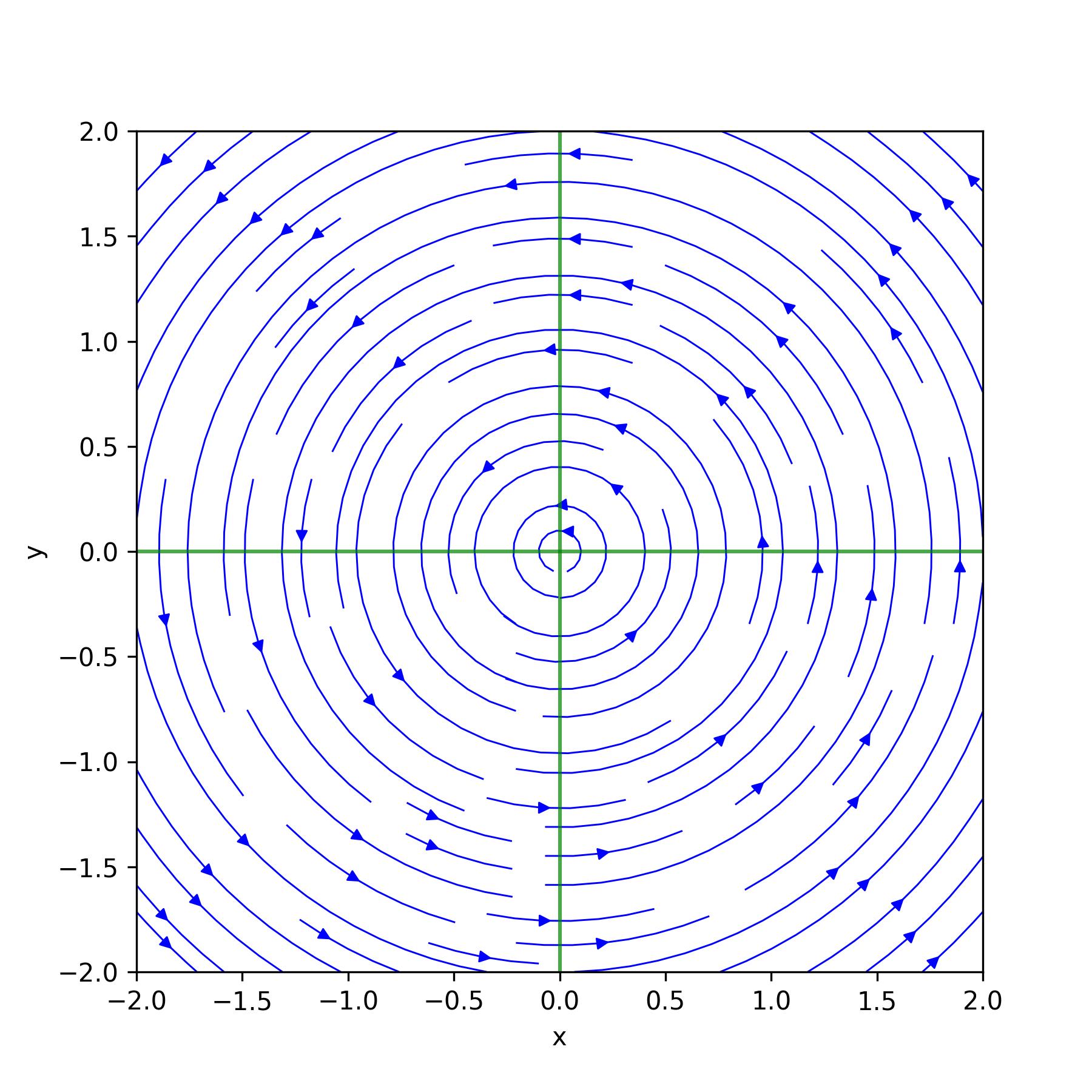}
\\
\(
M_{1,11}:=
\begin{pmatrix}0.0 & -1.0 \\ 1.0 & 0.0\end{pmatrix} 
\quad \textrm{(center 1)}
\)
\end{minipage}%
\hfill
\begin{minipage}[t]{0.5\linewidth}
\centering
\includegraphics[width=0.78\linewidth]{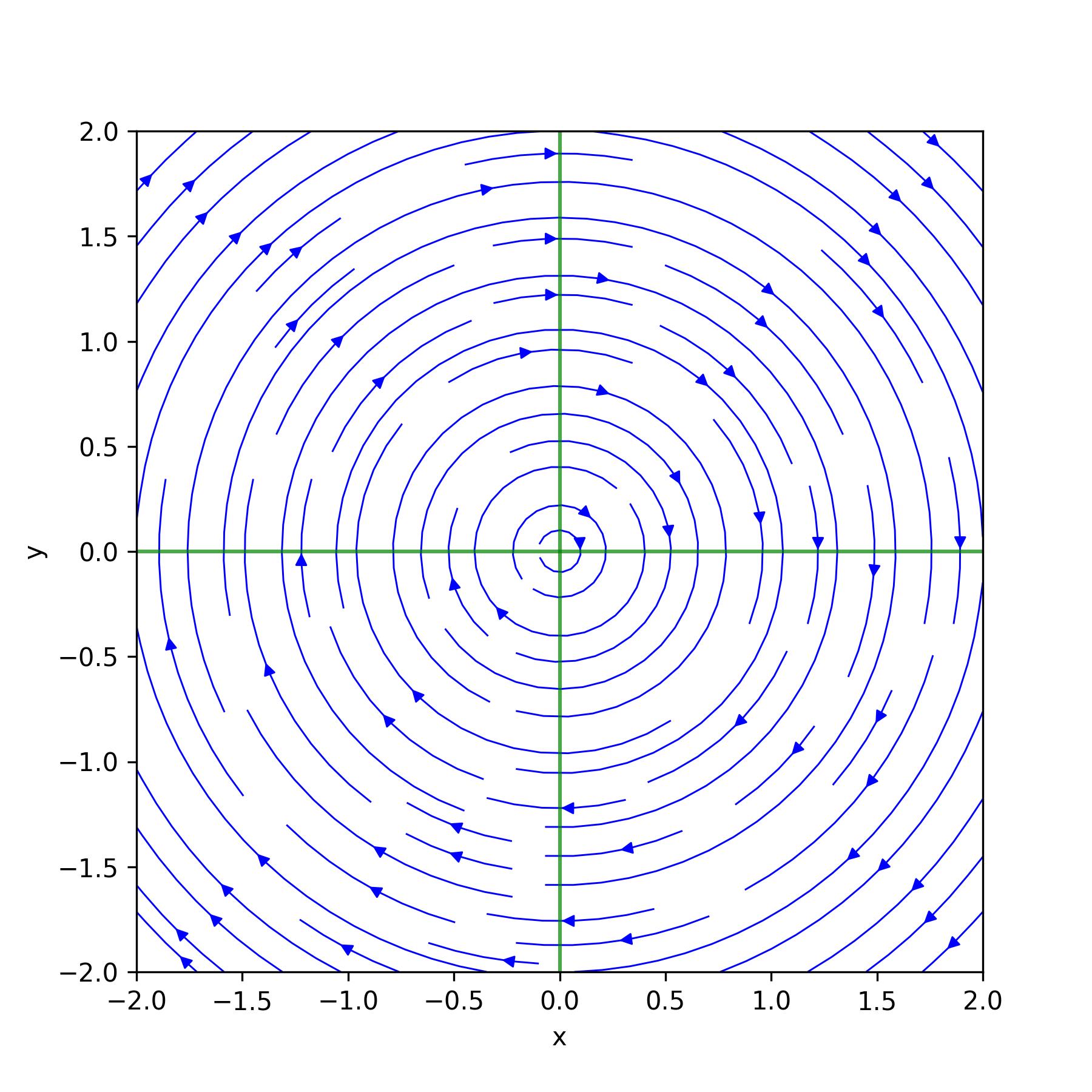}
\\
\(
M_{1,12}:=
\begin{pmatrix}0.0 & 1.0 \\ -1.0 & 0.0\end{pmatrix} 
\quad \textrm{(center 2)}
\)
\end{minipage}
\end{center}
\caption{
Representative examples of two-dimensional linear dynamical systems used throughout the numerical experiments. The figure includes phase portraits together with the corresponding system matrices. Green lines indicate the nullclines defined by $\dot{x}=0$ and $\dot{y}=0$. These benchmark systems constitute the reference dataset used to evaluate the ability of the proposed multifiltration strategies and Euler Characteristic Profiles to distinguish different qualitative types of equilibria}
\label{fig:linear_examples}
\end{figure*}

\begin{figure}[h]
    \centering
    \subfigure[]{\includegraphics[width=0.45\textwidth]{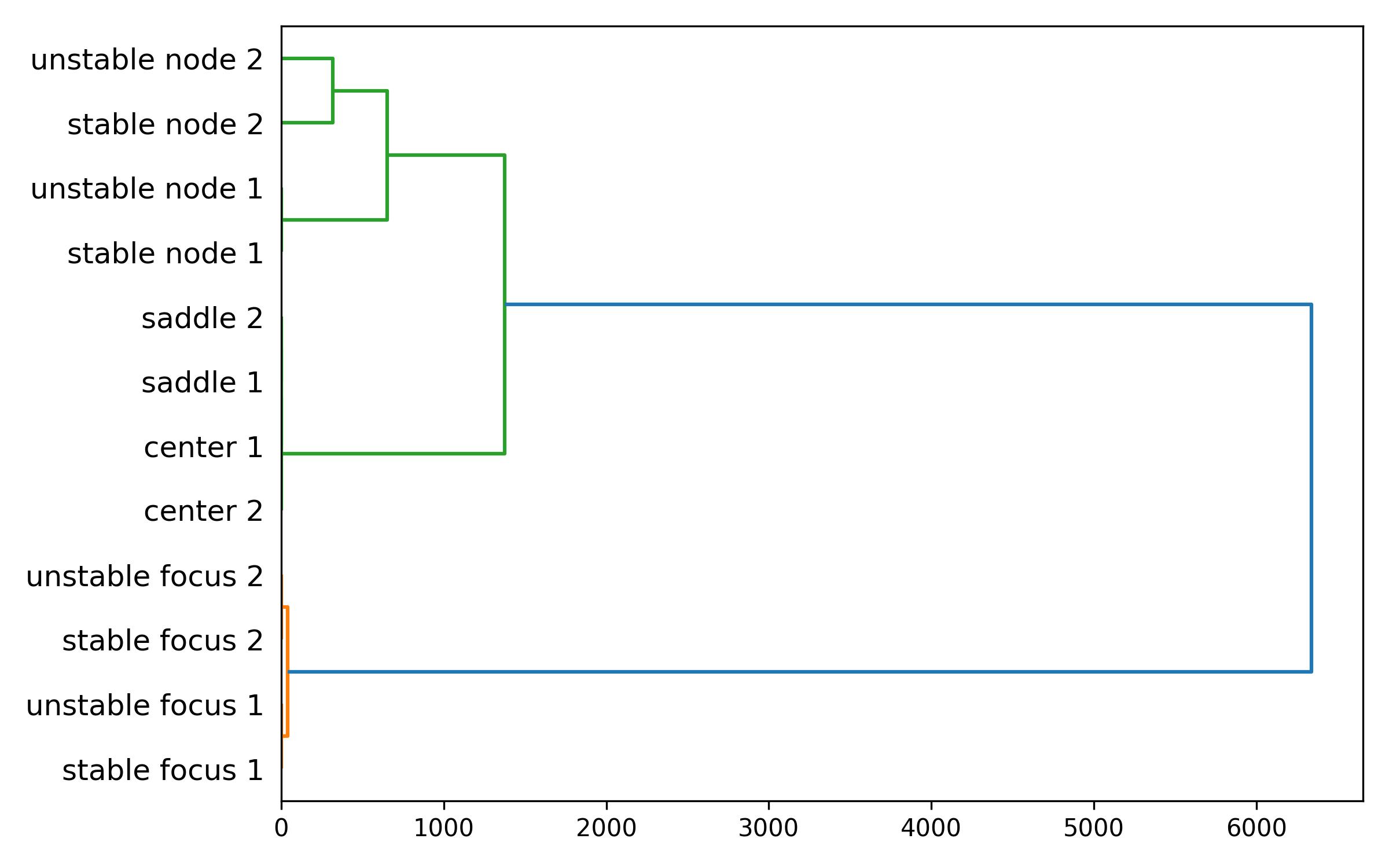}}
    \subfigure[]{\includegraphics[width=0.45\textwidth]{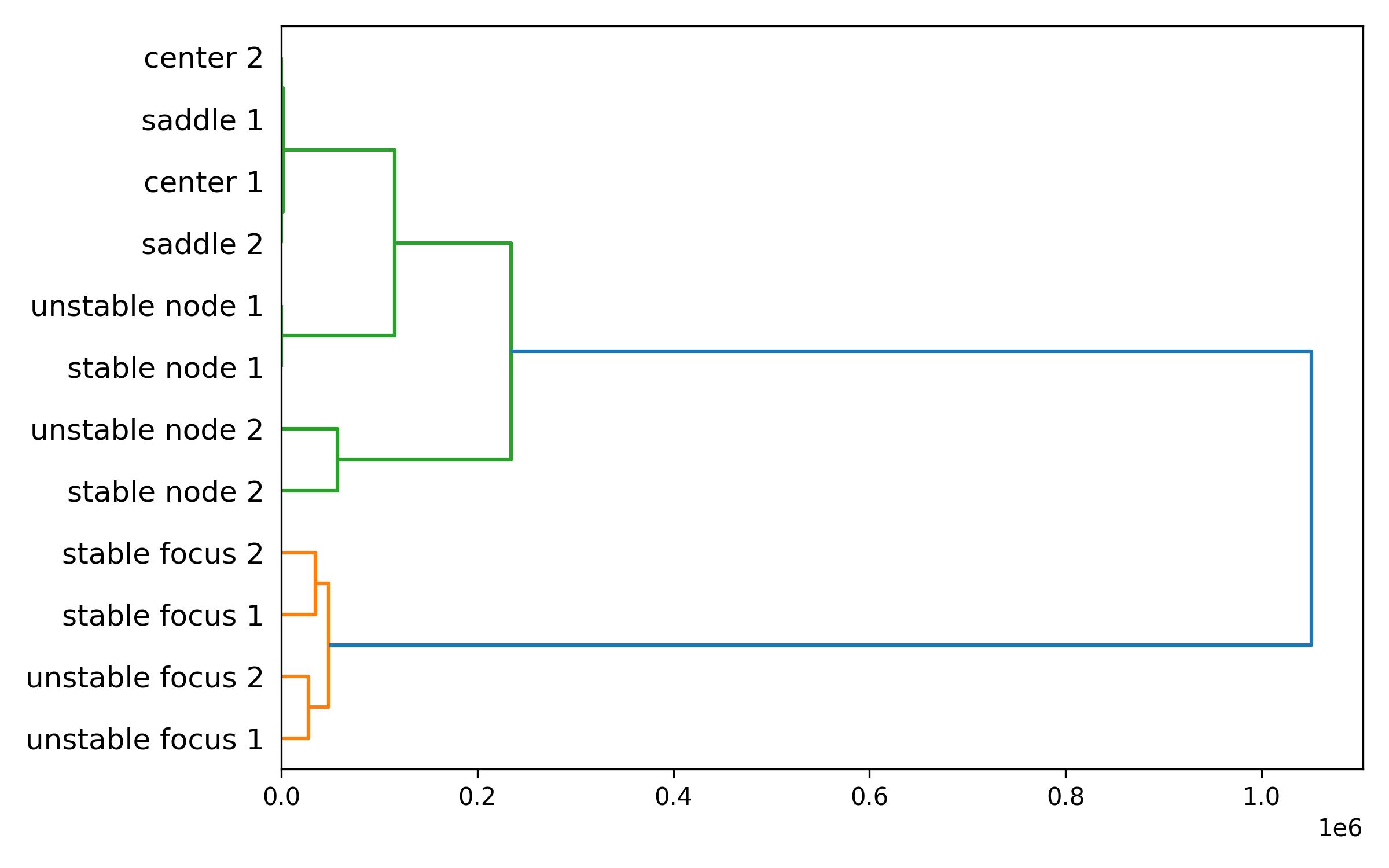}} \\
    \subfigure[]{\includegraphics[width=0.45\textwidth]{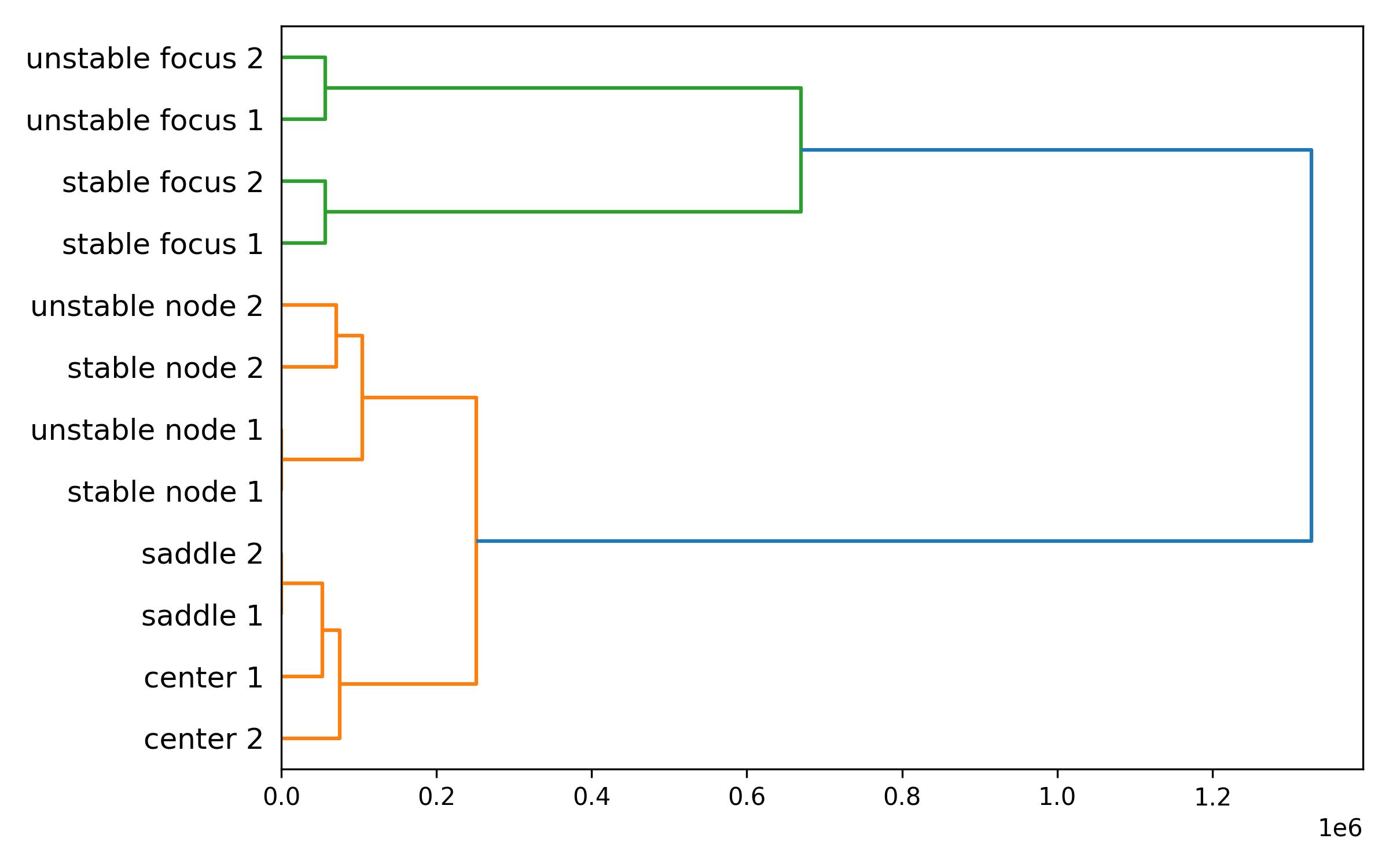}}
    \subfigure[]{\includegraphics[width=0.45\textwidth]{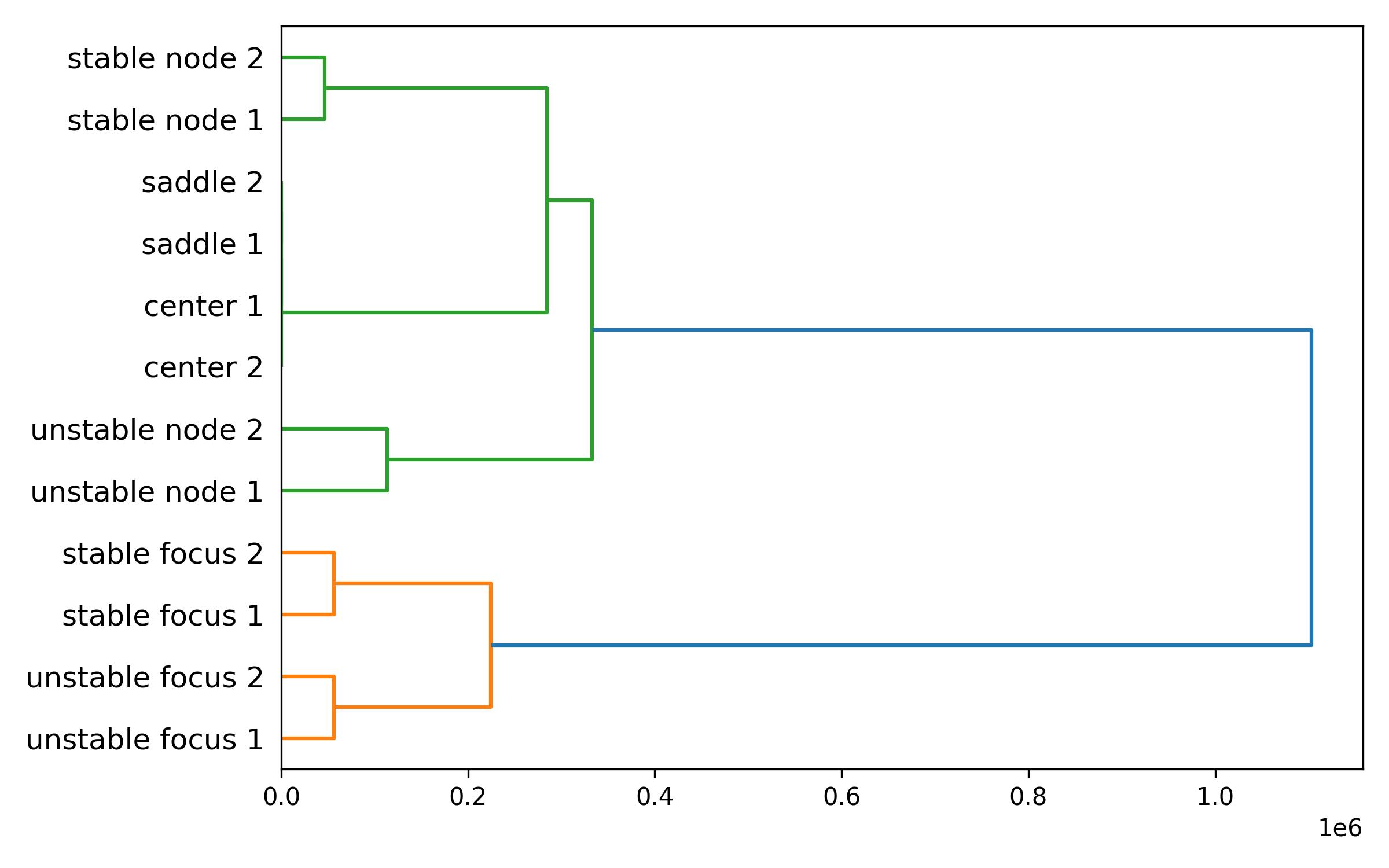}} \\
    \subfigure[]{\includegraphics[width=0.45\textwidth]{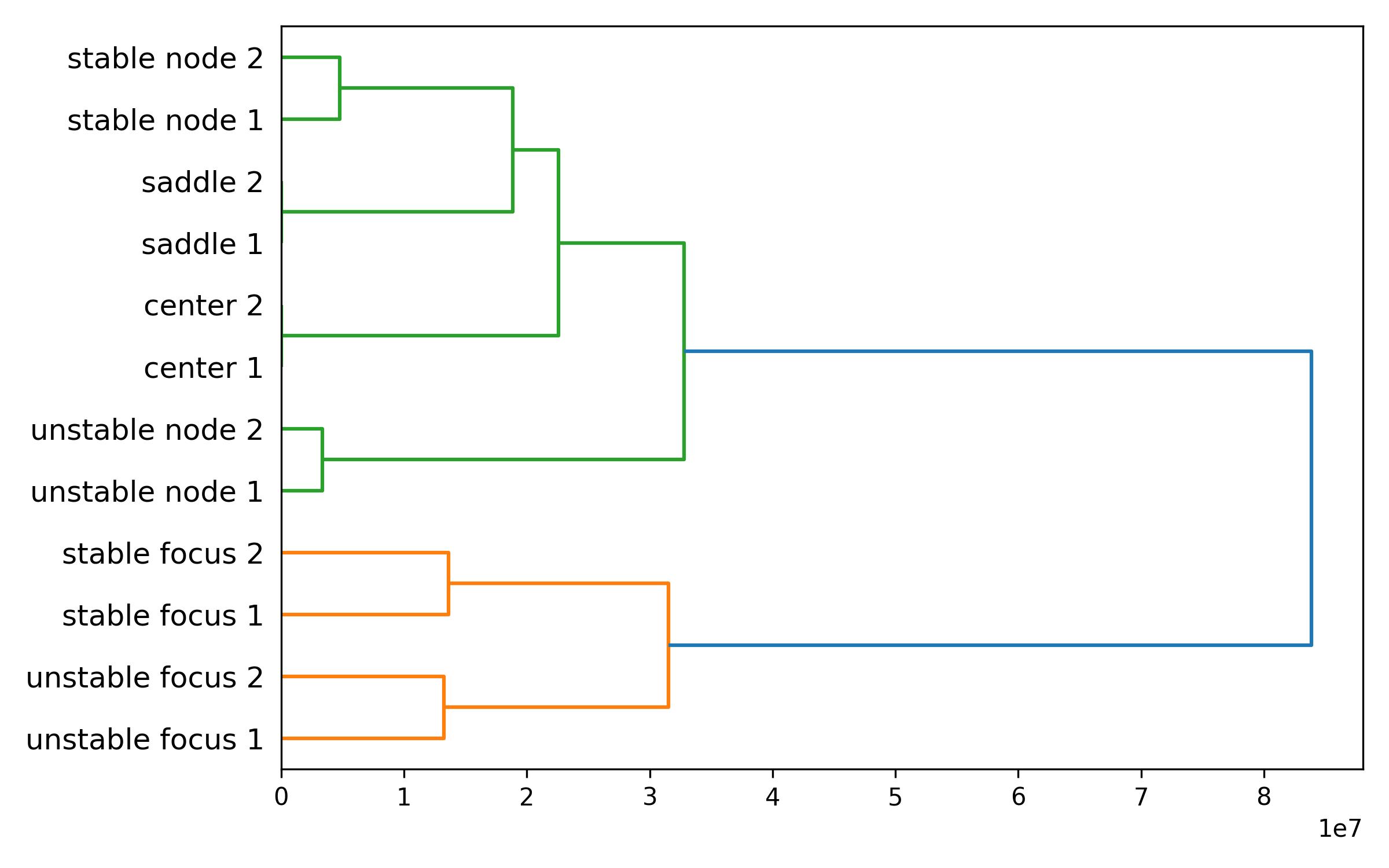}}
    \caption{Hierarchical clustering of two-dimensional linear autonomous systems obtained using Euler Characteristic Profiles (ECPs) computed from different multifiltration strategies: (a) vector field components, (b) vector field components with orientation angle, (c) vector field components with curl, (d) vector field components with divergence, and (e) eigenvalue-based multifiltration}
    \label{fig:2D_dendrograms}
\end{figure} 

\newpage
It should be noted that for multidimensional data with a regular grid, the ECP representation can be calculated more efficiently using the method described in Appendix~B.

For increased numerical accuracy, the computational grid has been refined to $101\times101$ points. 
The ECPs were computed using various multifiltration functions constructed from vector field features and the resulting dendrograms Fig. \ref{fig:2D_dendrograms}  were analyzed to assess the separability of the dynamical categories.

When the bifiltration was constructed directly from the vector field components, the clustering of ECP (shown in the dendrogram Fig. \ref{fig:2D_dendrograms} (a)) provides only a coarse separation of the systems. The primary distinction is between foci and all remaining equilibrium types, as foci are the only class characterized by complex eigenvalues. Beyond this, the clustering is mainly driven by the global magnitude and geometric similarity of the vector fields rather than by invariant dynamical properties. Consequently, stable and unstable systems of the same qualitative type remain mixed.

Using the orientation angle and the local rotation (curl) of the vectors as an auxiliary filtration produces the dendrograms shown in Fig.~\ref{fig:2D_dendrograms}(b) and Fig.~\ref{fig:2D_dendrograms}(c), respectively. Introducing an additional filtration already improves the discrimination of stable and unstable foci, indicating that these features provide complementary information beyond the raw vector values. However, the orientation angle alone does not further improve the separation of the remaining equilibrium types, since it reflects only the local direction of the flow. In contrast, the curl captures the local rotational structure, allowing saddle systems to be distinguished from centers, which are otherwise clustered together.

When the divergence (div) was added to the vector-based bifiltration, the resulting ECP dendrogram Fig. \ref{fig:2D_dendrograms}(d) revealed clustering according to the stability of equilibria. Divergence correlates with the sign of the trace of the Jacobian $\nabla \cdot \mathbf{F} = \mathrm{tr}(A)$, which enables separation between stable and unstable systems. Nevertheless, divergence alone does not encode the rotational structure of the flow and therefore cannot fully recover the qualitative classification.

Finally, when the multifiltration was defined using the eigenvalues of the Jacobian matrices, the ECP dendrogram Fig. \ref{fig:2D_dendrograms}(e) achieved a complete  separation of dynamical regimes. Stable and unstable systems, as well as the different equilibrium types, form distinct clusters. This result is consistent with the theoretical role of eigenvalues as local invariants of linear systems, directly encoding both the qualitative type of the equilibrium (real or complex eigenvalues) and its stability (sign of the real part). Hence, the eigenvalue-based filtration provides the most discriminative and physically meaningful representation among the tested variants.

Overall, the comparative analysis demonstrates that the choice of filtration determines which dynamical features are emphasized. No single local differential feature (orientation angle, curl or divergence) fully captures the topological classification of linear systems. Each proposed filtration answers a different question about the underlying dynamics and extracts complementary information. By combining multifiltrations, one can obtain a richer and more discriminative topological representation, which is particularly useful when systems exhibit subtle differences in flow direction or local stability characteristics. This approach enables a more nuanced comparison between linear systems and allows for the identification of features that would remain hidden if only a single scalar field were considered.

Our objective, however, is not to point out the best choice for (multi--)filtration when characterizing vector fields but indicate and examine various possibilities so that a choice for filtration can be made based on the important features from a point of view a particular classification problem.  

Note, that the preceding examples reveal a fundamental distinction between linear and nonlinear systems from the perspective of ECP. For a linear map is an intersection of affine half-spaces and is therefore convex whenever nonempty. Consequently, all nonempty sublevel sets are contractible, implying
\[
\chi(D_y)=1,
\]
while
\[
\chi(D_y)=0
\]
whenever $D_y=\varnothing$. The same conclusion remains valid after restricting the system to any convex bounded domain. Hence, for linear systems the ECP is necessarily binary and can attain only the values $\{0,1\}$.

This observation highlights that linear systems possess only trivial sublevel-set topology. Any richer behaviour of the ECP must therefore originate from nonlinear effects. Indeed, introducing a nonlinear perturbation may destroy convexity of the sublevel sets, leading to the creation of multiple connected components, holes, or more complicated topological structures. As a consequence, the Euler characteristic may attain a much broader range of values, including positive integers greater than one, negative values, and in certain unbounded settings even infinite values.

From this perspective, the ECP can be interpreted as a quantitative measure of topological complexity. Linear dynamics produce the simplest possible profile, whereas deviations from the binary pattern $\{0,1\}$ provide a direct topological signature of nonlinear phenomena. Therefore, the emergence of additional profile values may be viewed as an indicator of increasing nonlinear complexity in the underlying system.

\section{Properties of Euler Characteristic Profiles}
\label{sec:prop_ECP}
At the beginning, let us see if and how ECP changes when the vector field is transformed by rotation, rescaling, translation by a vector and perhaps other general transformations. In particular, with the knowledge on the impact of rotation on ECP, we can define \textbf{equivariant ECP}. i.e. a counterpart of Euler Characteristic Profile which is invariant with respect to the translation. 
\begin{lemma}
Let $ K $ be a finite cell complex equipped with a  multifiltration $f : K \to \mathbb{R}^n$.
Let $ \mathrm{ECP}(K, f) \subset \mathbb{R}^n \times \mathbb{Z} $ denote the Euler Characteristic Profile associated to $ f $, expressed as a collection of anchor points with contributions. Let $ p : \mathbb{R}^n \to \mathbb{R}^n $ be an transformation, i.e., a composition of:
\begin{enumerate}
    \item a scaling by a constant factor $ S > 0 $,
    \item a translation by a vector $ w \in \mathbb{R}^n $.
\end{enumerate}
Then, the ECP of the composition $ p \circ f : K \to \mathbb{R}^n $ is given by:
\[
\mathrm{ECP}(K, p \circ f) = \left\{ \left(p(a^i), \mathrm{val}^i\right) \right\}_{i=1}^k,
\]
where $ \{(a^i, \mathrm{val}^i)\}_{i=1}^k $ are the anchor points and corresponding contributions of $ \mathrm{ECP}(K, f) $.

\end{lemma}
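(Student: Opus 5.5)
We first make precise what the anchor-point representation of $\mathrm{ECP}(K,f)$ is. For a $1$-critical multifiltration, each cell $\sigma$ enters at the unique birth parameter $f(\sigma)\in\mathbb{R}^n$, and for every $y\in\mathbb{R}^n$
\[
\mathrm{ECP}(K,f)(y)=\chi\bigl(\{\sigma\in K: f(\sigma)\le y\}\bigr)=\sum_{\sigma\in K}(-1)^{\dim\sigma}\,\mathbf{1}\bigl[f(\sigma)\le y\bigr].
\]
Grouping cells by their birth value gives the anchor points $a^1,\dots,a^k$, namely the distinct values of $f$ on $K$. The contribution of each is $\mathrm{val}^i=\sum_{\sigma: f(\sigma)=a^i}(-1)^{\dim\sigma}$. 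Hence $\mathrm{ECP}(K,f)(y)=\sum_i \mathrm{val}^i\,\mathbf{1}[a^i\le y]$. The plan is to establish this identity as the defining bridge between the profile and the list $\{(a^i,\mathrm{val}^i)\}$, and then transport it through $p$.

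The key step is to check that $p(y)=Sy+w$ with $S>0$ is a strictly order-preserving bijection of $(\mathbb{R}^n,\le)$ for the product order. Coordinatewise, $a_j\le y_j$ if and only if $Sa_j+w_j\le Sy_j+w_j$, because $S>0$. The inverse $p^{-1}(z)=(z-w)/S$ is also an affine map with positive factor. From this we deduce two facts. First, $p\circ f$ is again a valid multifiltration: if $\tau\subseteq\sigma$ then $f(\tau)\le f(\sigma)$, so $p(f(\tau))\le p(f(\sigma))$. Second, for every $z\in\mathbb{R}^n$ the sublevel complexes agree,
\[
K^{p\circ f}_z=\{\sigma: p(f(\sigma))\le z\}=\{\sigma: f(\sigma)\le p^{-1}(z)\}=K^{f}_{p^{-1}(z)}.
\]
This gives $\mathrm{ECP}(K,p\circ f)=\mathrm{ECP}(K,f)\circ p^{-1}$.

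Finally we read off the anchor points. Since $p$ is injective, the distinct birth values of $p\circ f$ are exactly $p(a^1),\dots,p(a^k)$, and the set of cells born at $p(a^i)$ is the same set of cells born at $a^i$. The signed cell counts therefore coincide, so the contribution at $p(a^i)$ is $\mathrm{val}^i$. Equivalently, substituting into the indicator formula, $\mathbf{1}[a^i\le p^{-1}(z)]=\mathbf{1}[p(a^i)\le z]$, which confirms the claimed representation $\{(p(a^i),\mathrm{val}^i)\}_{i=1}^k$.

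The only genuine point of care is the order-preservation step. It is where the hypothesis $S>0$ is used, and the argument fails for negative scalings or general rotations, which do not preserve the product order. The rest is bookkeeping, together with a remark that injectivity of $p$ prevents distinct anchors from merging, so no contributions need to be summed.
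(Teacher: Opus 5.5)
Your proof is correct and follows the same route as the paper's proof. Like the paper, you write $\mathrm{ECP}(K,f)$ as a finite sum of step functions anchored at birth values, use that $p$ preserves the product order because $S>0$, and conclude that each cell is born at $p(a)$ with an unchanged signed contribution; you additionally make explicit the identity $\mathrm{ECP}(K,p\circ f)=\mathrm{ECP}(K,f)\circ p^{-1}$ and the injectivity of $p$ (so no anchors merge), which the paper leaves implicit.
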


\begin{proof}
Recall that for a multifiltration, the $\mathrm{ECP}(K,f)$ can be expressed as a finite sum of generalized step functions. Each such function is constant (with integer value) on the region
\[
\{x \in \mathbb{R}^n \mid x \geq a\},
\]
for some anchor point $a = (a_1,\dots,a_n) \in \mathbb{R}^n$ and vanishes below $a$.

The anchor points correspond precisely to the filtration values at which cells of $K$ enter the filtration induced by $f$. Now consider the transformed filtration $p \circ f$. Since $p$ is affine and order-preserving (as $S>0$), the relative ordering of filtration values is unchanged; only their locations in $\mathbb{R}^n$ are transformed by $p$.

Consequently, each cell of $K$ enters the filtration at the transformed value
$p(a)$, where $a$ was its original anchor point under $f$.
Thus, the generalized step function associated to $a$ in $\mathrm{ECP}(K,f)$
is replaced by a step function anchored at $p(a)$ in $\mathrm{ECP}(K,p \circ f)$, with the same integer coefficient.

Therefore, the collection of anchor points of $\mathrm{ECP}(K,p \circ f)$
is exactly the image under $p$ of the anchor points of $\mathrm{ECP}(K,f)$,
which proves the claim. \qed
\end{proof}

\begin{remark}[Multi--critical multifiltrations]
For a multi--critical multifiltration, that is a filtration for which a cell appear on a number of non--compatible points, the generalized step functions appearing in the ECP are no longer supported above a single anchor point, but rather above a finite set of anchor points
\[
a^{(1)}, \dots, a^{(k)} \in \mathbb{R}^n,
\]
meaning the function takes value $1$ on points dominating at least one of these anchors. Applying an affine transformation $p$ to the parameter space simultaneously transforms all anchor points $a^{(i)}$ to $p(a^{(i)})$. The same argument as above shows that
\[
\mathrm{ECP}(K, p \circ f)
\]
is obtained by applying $p$ to all anchor points, and the proof carries over  without modification.
\end{remark}

\begin{remark}[Rotations]
In contrast to scalings and translations, ECP is not equivariant with respect to rotations of the parameter space. This failure stems from the fact that rotations do not preserve the coordinate-wise partial order  on $\mathbb{R}^n$ that underlies the definition of multifiltrations.

Let us illustrate the ECP on a simple linear dynamical system. Consider the planar vector field
\begin{equation}
\begin{aligned}
\dot{x} &= ax+by,\\
\dot{y} &= cx+dy,
\end{aligned}
\end{equation}
with parameters $a=-1.5, b=0, c=0, d=-2$ so that the origin is a stable node. The vector field is sampled on a regular grid $[x_{\min},x_{\max}] \times [y_{\min},y_{\max}]$ with domain $[-2,2]\times[-2,2]$.  At each grid point $(x_i,y_j)$ we evaluate the vector $f(x_i,y_j) = (f(x_i,y_j), g(x_i,y_j))$. This vector values are treated as filtration parameters, which defines a bifiltration on the set of two–dimensional cells. Following the standard construction the filtration value of lower-dimensional cells (edges and vertices) is defined as the componentwise minimum of the values of incident higher–dimensional cells. This procedure produces a 1-critical bifiltration of the cubical complex associated with the grid. The resulting filtered complex is illustrated schematically in Fig. \ref{fig:rotation}(a). The ECP computed for this filtration is shown in Fig. \ref{fig:rotation}(c).

Next we consider a rotated version of the same vector field.  For a rotation angle $\theta$ we define the rotation matrix
\[
R =
\begin{pmatrix}
\cos\theta & -\sin\theta \\
\sin\theta & \cos\theta
\end{pmatrix}.
\]
In our example we take $\theta=\pi/4$. Applying this transformation to the vector values produces a new field $\tilde f(x,y) = R f(x,y)$. The rotated vectors again define a bifiltration on the same grid by assigning the values
$(\tilde f(x_i,y_j),\tilde g(x_i,y_j))$ to the two–dimensional cells. As before, filtration values of edges and vertices are defined using the componentwise minimum. The resulting filtered complex is illustrated in Fig. \ref{fig:rotation}(b) and the corresponding ECP is shown in Fig. \ref{fig:rotation}(d).

A natural question is whether the ECP behaves covariantly under rotations of the vector field.  To test this, we rotate the original profile by the same angle $\theta=\pi/4$ in the parameter space.

However, the rotated profile does not coincide with the profile computed from the rotated vector field. In other words,
\[
\mathrm{ECP}_{R f} \neq R\big(\mathrm{ECP}_f\big).
\]
This discrepancy arises from the fact that the construction of the multifiltration depends on the coordinate axes of the filtration space. While the vector field itself is rotated linearly, the componentwise ordering used to define the bifiltration is not invariant under such transformations.

Consequently, ECPs derived from componentwise filtrations of vector fields are generally not rotationally invariant.
\begin{figure}[h]
    \centering
    \subfigure[]{\includegraphics[width=0.3\textwidth]{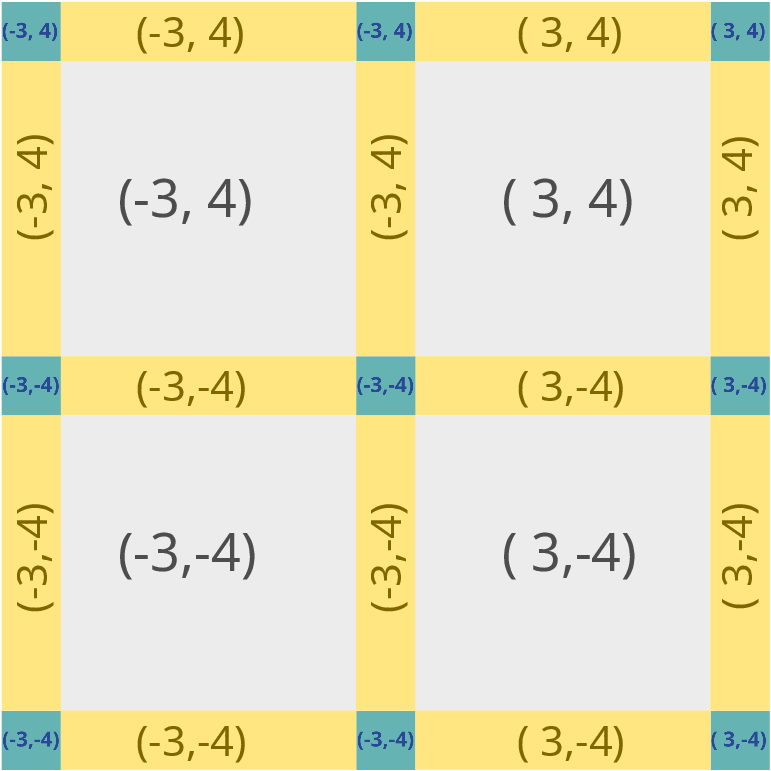}}
    \hspace{2.5cm}
    \subfigure[]{\includegraphics[width=0.3\textwidth]{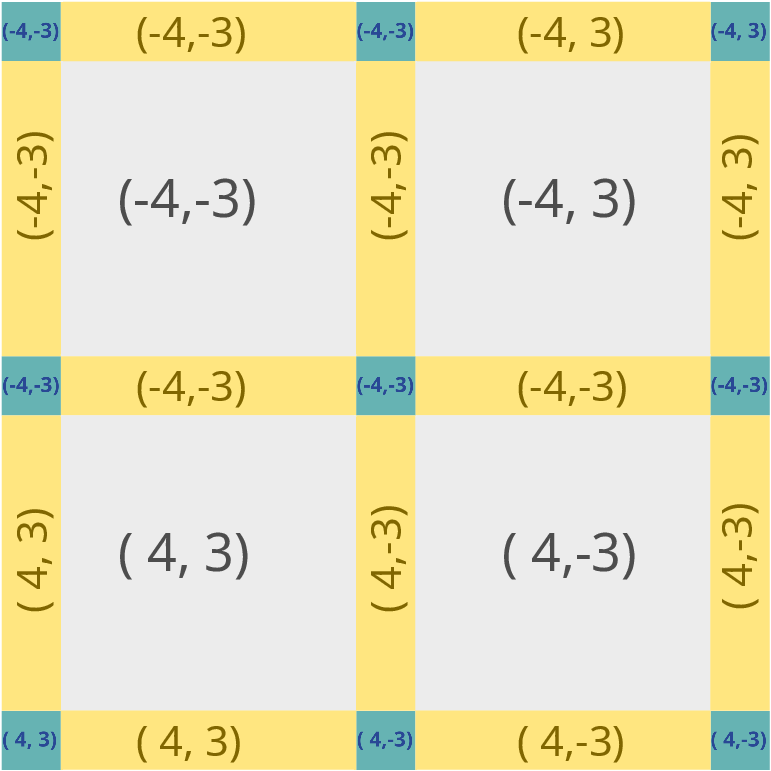}} \\
    \subfigure[]{\includegraphics[width=0.4\textwidth]{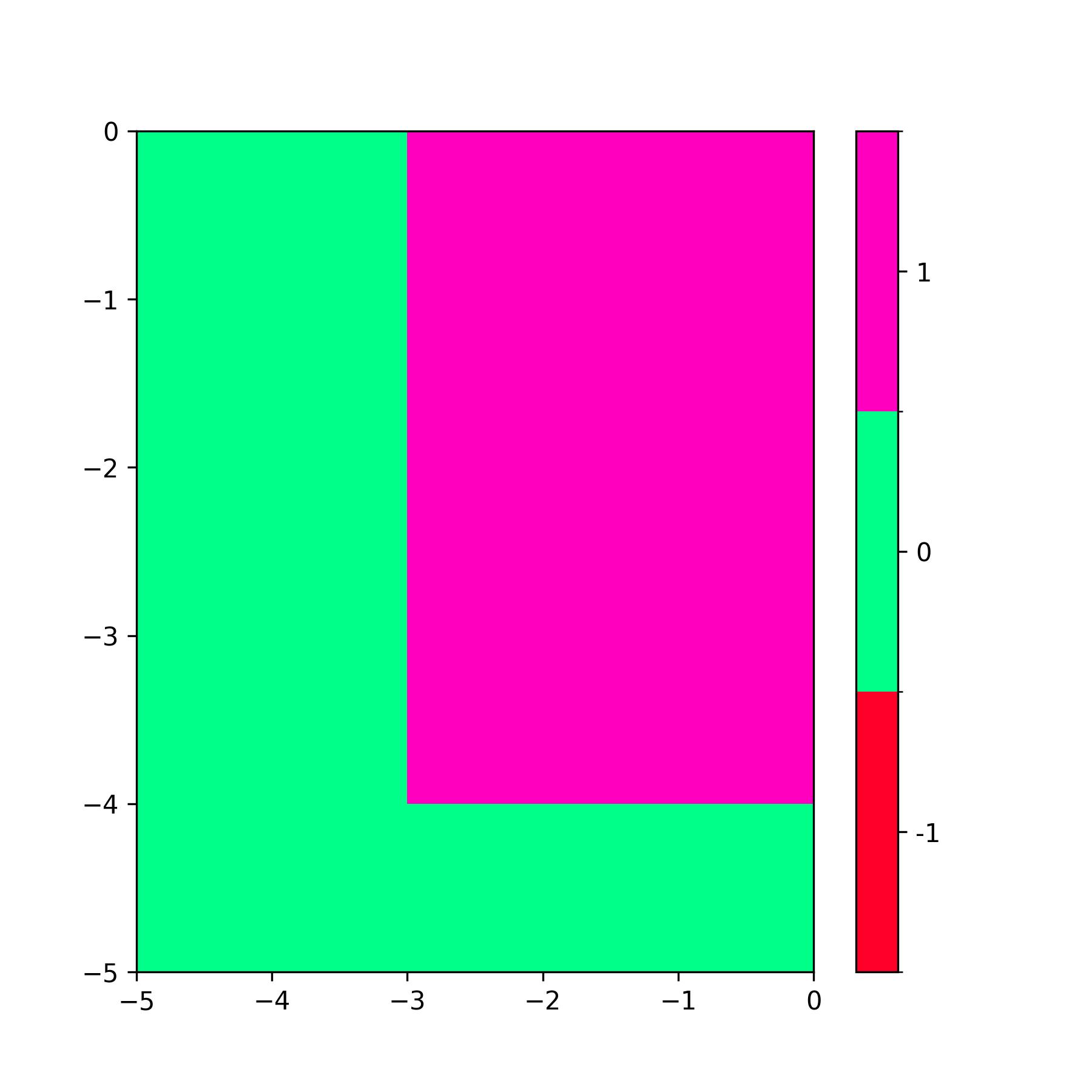}}
    \hspace{0.8cm}
    \subfigure[]{\includegraphics[width=0.4\textwidth]{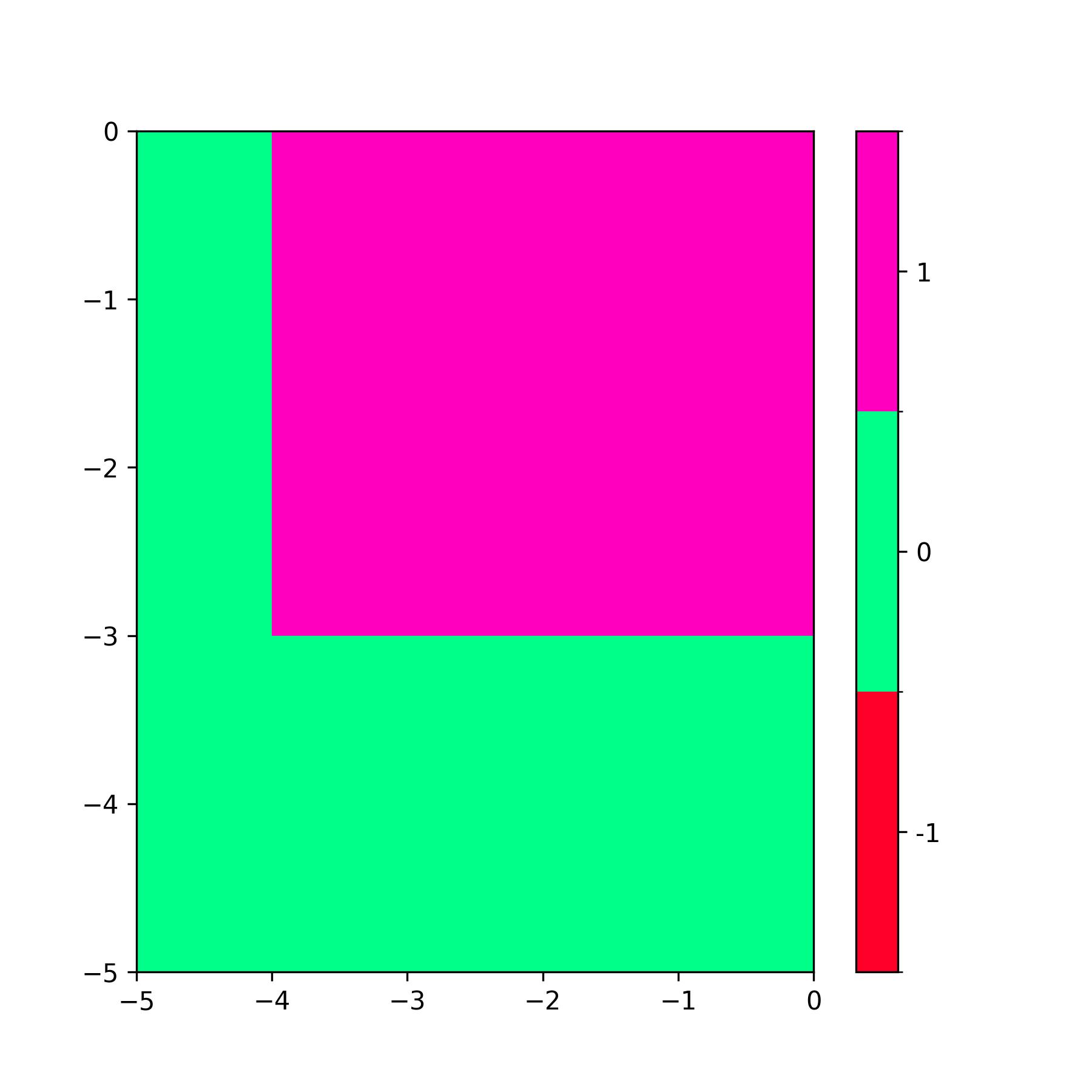}}
    \caption{Illustration of rotational invariance for Euler Characteristic Profiles (ECPs). Panels (a) - (b) present a cubical complex equipped with the original bifiltration and the corresponding bifiltration after a $\pi/4$ rotation, respectively. Panels (c) - (d) show the associated ECPs, illustrating the effect of geometric transformations on the proposed descriptor}
    \label{fig:rotation}
\end{figure}
\end{remark}

\subsection{ECP under Dynamical Equivalence and Related Notions}

There are various notions of classifying dynamical similarity of continuous dynamical systems. Let us below recall a couple of definitions (see e.g. \cite{kuznetsov}).

In below we assume that $X\subset \mathbb{R}^n$ and $Y\subset \mathbb{R}^n$ are topological spaces and $f: X\to \mathbb{R}^n$ and $g: Y\to \mathbb{R}^n$ are vector fields which induce flows $\Phi$ on $X$ and $\Psi$ on $Y$, respectively, i.e. $\dot{x}=f(x)$ and $\dot{y}=g(y)$.  

\begin{definition}(Topological equivalence of flows) We say that two flows $\Phi: X\times\mathbb{R}\to X$ and $\Psi: Y\times \mathbb{R}\to Y$ are \emph{topologically equivalent} if there exists a homeomorphism $h: X\to Y$ such that the following two conditions are satisfied: 
\begin{enumerate}
\item for every $x\in X$  we have \[ h(\mathcal{O}(x,\Phi))=\mathcal{O}(h(x),\Psi),\] 
where $\mathcal{O}(x,\Phi):=\{\Phi(x,t): \ t\in\mathbb{R}\}$ and $\mathcal{O}(h(x),\Psi):=\{\Psi(h(x),t): \ t\in \mathbb{R}\}$ denote the orbits of $x\in X$ and $h(x)\in Y$, respectively, 
\item for every $x\in X$, there exists
$\delta_x>0$ such that, if
\[
0<\vert s \vert < t<\delta_x,
\qquad\text{and}\qquad
\Psi(h(x),s)=h(\Phi(x,t)),
\]
then $s>0$. 
\end{enumerate}

\end{definition}
Thus topological equivalence means that the orbits of $\Phi$ are mapped homeomorphically onto the corresponding orbits of $\Psi$ with preserving the direction of time. 

Next two definitions refer more directly to the vector fields inducing the flows and are special cases of topological equivalence which can be described analytically:

\begin{definition}(Smooth equivalence) Flows $\Phi$ and $\Psi$ defined, respectively, by the autonomous ODEs $\dot{x}=f(x)$ and $\dot{y}=g(y)$,
are \emph{smoothly equivalent} if there exists a diffeomorphism  $h:X\to Y$ 
such that
\begin{equation}\label{eq:smoothequiv}
f(x)=M^{-1}(x)g(h(x)), 
\end{equation}
where $M:=D_h(x)$ is the Jacobian matrix of $h$ evaluated at $x$. 
\end{definition}
Dynamical systems  which are smooth equivalent are related by the coordinate transformation $y=h(x)$ and are also called \emph{diffeomorphic}. 

\begin{definition}(Orbital equivalence)\label{def:orbitalequiv}
Suppose that $\mu = \mu(x)>0$ is a smooth scalar positive function and that
\begin{equation}\label{eq:orbitalequiv}
f(x) = \mu(x)g(x).
\end{equation}

Then the systems $\dot{x}=f(x)$ and $\dot{x}=g(x)$ are called \emph{orbitally equivalent}.
\end{definition}

Systems that are smoothly equivalent or orbitally equivalent are also topologically equivalent but reverse is not true (for example, two planar linear systems $\dot{x}=Ax$ and $\dot{y}=By$, one featuring a stable focus and the other one a stable node at $(0,0)$ are neither orbitally nor smoothly equivalent but are topologically equivalent in any closed disc centered at $(0,0)$). On the other hand, systems which are orbitally equivalent only differ by velocity along the orbits, which is altered by multiplying by the factor $\mu(x)$ (note that in particular, both such systems need to act on the same phase space).  In this case the homeomorphism $h$ in Definition \ref{def:orbitalequiv} is the identity ($h(x)=x$). 

The next simple result shows the relation between the continuous ECP and smoothly equivalent systems in 1D (in which case we denote continuous ECP as $cECC$):

\begin{lemma}\label{lem:ECPforsmoothequiv}
Let $f:I_1\to \mathbb{R}$ and $g:I_2\to\mathbb{R}$, where $I_1\subset\mathbb{R}$  and $I_2\subset\mathbb{R}$ are closed intervals. Suppose that the systems $\dot{x}=f(x)$, $x\in I_1$ and $\dot{y}=g(y)$, $y\in I_2$, are smoothly equivalent and $h: I_1\to I_2$ is the connecting diffeomorphism as in \eqref{eq:smoothequiv}.

Suppose that $h: I_1\to I_2$ is increasing.  Then
\begin{equation}\label{eq:ECPforsmoothequiv1}
cECC_{g}(0) = cECC_{f}(0). 
\end{equation}

On the other hand, if $h: I_1\to I_2$ is decreasing and the set $\mathrm{Zero}_f$ of zeros of $f$ is finite, then:
\begin{equation}\label{eq:ECPforsmoothequiv2}
cECC_{g}(0)=1+\vert \mathrm{Zero}_f \vert - cECC_{f}(0),
\end{equation}
where $\vert \mathrm{Zero}_f  \vert$ denotes the cardinality of $\mathrm{Zero}_f$.
  
\end{lemma}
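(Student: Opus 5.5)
The plan is to reduce everything to a sign comparison between $f$ and $g\circ h$, and then to use either homeomorphism invariance or the inclusion--exclusion property of the Euler characteristic. In dimension one, the smooth equivalence relation \eqref{eq:smoothequiv} reads
\[
f(x)=\frac{g(h(x))}{h'(x)},\qquad x\in I_1 .
\]
Since $h$ is a diffeomorphism of intervals, $h'$ never vanishes. Hence $h'>0$ everywhere if $h$ is increasing, and $h'<0$ everywhere if $h$ is decreasing.

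\textbf{Increasing case.} If $h$ is increasing, $f(x)$ and $g(h(x))$ have the same sign at every $x\in I_1$. It follows that
\[
h\bigl(\{x\in I_1: f(x)\le 0\}\bigr)=\{y\in I_2: g(y)\le 0\},
\]
because $h$ is a bijection $I_1\to I_2$. The restriction of $h$ is a homeomorphism between these two sublevel sets. Homeomorphism invariance of the Euler characteristic (they are assumed to have finite Euler characteristic in the definition of cECC) then gives $cECC_g(0)=cECC_f(0)$, which is \eqref{eq:ECPforsmoothequiv1}.

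\textbf{Decreasing case.} If $h$ is decreasing, the signs are opposite, so
\[
h\bigl(\{x\in I_1: f(x)\ge 0\}\bigr)=\{y\in I_2: g(y)\le 0\}.
\]
By the same invariance argument, $cECC_g(0)=\chi(\{f\ge 0\})$. It remains to express $\chi(\{f\ge 0\})$ through $\chi(\{f\le 0\})$.

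Write $A=\{f\le 0\}$ and $B=\{f\ge 0\}$. These are closed subsets of $I_1$ with $A\cup B=I_1$ and $A\cap B=\mathrm{Zero}_f$. Inclusion--exclusion, the same formula the paper uses for ECPs of closed orbits, gives
\[
\chi(I_1)=\chi(A)+\chi(B)-\chi(\mathrm{Zero}_f).
\]
Here $\chi(I_1)=1$ because $I_1$ is contractible, and $\chi(\mathrm{Zero}_f)=|\mathrm{Zero}_f|$ because it is a finite set of points. Solving for $\chi(B)$ yields \eqref{eq:ECPforsmoothequiv2}.

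\textbf{Main obstacle.} The only delicate step is justifying inclusion--exclusion for these closed sets rather than for cell complexes. I would use finiteness of $\mathrm{Zero}_f$ to triangulate $I_1$, taking as vertices the endpoints of $I_1$ together with the zeros of $f$. On each open edge of this triangulation $f$ is continuous and nonvanishing, so it has constant sign by the intermediate value theorem. Therefore $A$ and $B$ are subcomplexes: each is a union of closed edges and isolated vertices, where an isolated vertex is a zero at which $f$ does not change sign. Their intersection is exactly the vertex subcomplex $\mathrm{Zero}_f$. The combinatorial inclusion--exclusion for the cell counts $|K_d|$ from the definition of the Euler characteristic then applies directly.

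I would also remark why finiteness of zeros is needed only in the decreasing case. In the increasing case the argument is pure homeomorphism invariance. In the decreasing case the correction term $|\mathrm{Zero}_f|$, and the subcomplex structure used to justify inclusion--exclusion, would fail for infinitely many zeros.
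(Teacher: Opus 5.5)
Your proof is correct and follows the paper's argument essentially step for step. It uses the sign relation $g(h(x))=h'(x)f(x)$ and homeomorphism invariance, giving $\{f\le 0\}=h^{-1}(\{g\le 0\})$ in the increasing case and $\{g\le 0\}=h(\{f\ge 0\})$ in the decreasing case, followed by inclusion--exclusion over $\{f\le 0\}\cup\{f\ge 0\}=I_1$ with intersection $\mathrm{Zero}_f$. Your triangulation of $I_1$ at the zeros of $f$ is a useful addition, since the paper applies inclusion--exclusion to these closed sets without justifying it.
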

\begin{remark}
    Note that, as $f$ and $g$ are connected by the diffeomorphism $h$ according to \eqref{eq:smoothequiv}, finite set of zeros of $f$ implies that the set of zeros of $g$ is also finite. In the increasing case, the assumption on the finiteness of $\mathrm{Zero}_f$ is not needed whereas in the decreasing case it can replaced by more general assumption that the sets $\mathrm{Zero}_f$  and $Z_1:=f^{-1}((-\infty,0])$  have well-defined finite Euler characteristic, in which case \eqref{eq:ECPforsmoothequiv2} takes the form
 \begin{equation}\label{eq:ECPforsmoothequiv3}
cECC_{g}(0)=1+\chi(\mathrm{Zero}_f)  - cECC_{f}(0).
\end{equation}   
\end{remark}
 
\begin{proof}
Firstly, note that if $h: I_1 \to I_2$ is the diffeomorphism providing smooth equivalence between the two systems, then  $h$ is either strictly increasing (in which case $M(x)>0$ for every $x\in I_1$) or strictly decreasing (and $M(x)<0$ for every $x\in I_1$). Define $Z_1:=f^{-1}((-\infty, 0 ])$ and $Z_2:=g^{-1}((-\infty, 0 ])$.

Suppose that $h$ is strictly increasing.  Then $f(x)\leq 0$ is equivalent to $M(x)f(x)\leq 0$ and from \eqref{eq:smoothequiv} we conclude that 
\[
Z_1:=\{ x\in I_1: \ g(h(x))\leq 0 \}=\{x= h^{-1}(y)\in I_1: \ g(y)\leq 0\}= h^{-1}(Z_2).
\]
Consequently, $\chi(Z_1)=\chi(h^{-1}(Z_2))=\chi(Z_2)$, as $h$ preserves Euler characteristic of sets which proves \eqref{eq:ECPforsmoothequiv1}. 

Now, suppose that $h: I_1\to I_2$ is decreasing. Note that $y\in Z_2$  if and only if there exists a unique $x\in I_1$ such that $y=h(x)$ and $f(x)\geq 0$. Thus
\[
Z_2=h((I_1\setminus Z_1)\cup \mathrm{Zero}_f).
\]
Consequently,
\[
cECC_{g}(0)=\chi(Z_2)=\chi(h((I_1\setminus Z_1)\cup \mathrm{Zero}_f))=\chi((I_1\setminus Z_1)\cup \mathrm{Zero}_f).
\]
Note that $(I_1\setminus Z_1)\cup \mathrm{Zero}_f=\{x\in I_1: \ f(x)\geq 0\}$. Since $\{f(x)\geq 0\}\cup Z_1=I_1$ and $\{f(x)\geq 0\}\cap Z_1=\mathrm{Zero}_f$, by inclusion-exclusion property of the Euler characteristic we have:
\[
\chi(I_1)=\chi(\{x\in I_1: \ f(x)\geq 0\})+\chi(Z_1)-\chi(\mathrm{Zero}_f).
\]
This proves \eqref{eq:ECPforsmoothequiv2}, as $\chi(I_1)=1$ and $\chi(\mathrm{Zero}_f)=\vert \mathrm{Zero}_f \vert$. 
\qed
\end{proof}

\section{Numerical Examples: ECP for Nonlinear and Higher Dimensional Systems}
\label{sec:ECP_nonlinear}
Nonlinear systems are more complex and diverse in their behavior compared to linear systems. These systems can exhibit a wide range of phenomena, including bifurcations, chaos and multi--stable changes ~\cite{ott2002}. In this section, we delve into more intricate examples, where the application of ECP is crucial for understanding the system's behavior under various conditions.

\subsection{Hopf Bifurcation}
The Hopf bifurcation is a well-known nonlinear phenomenon that occurs when a system undergoes a transition from a stable equilibrium to a limit cycle. This bifurcation is central to the study of nonlinear dynamics and is often observed in biological, chemical and mechanical systems. Let us start with a simple experiment on the system exhibiting Hopf bifurcation given by the equation: 
\begin{equation}\label{eq:Hopf}
\begin{aligned}
  \dot{x} &=\beta x - y - x (x^2+y^2)  \\
  \dot{y} &=x +\beta y - y (x^2+y^2)
\end{aligned}
\end{equation}
where for the following values of $\beta$ parameters we have:

\begin{itemize}[label=\textbullet]
    \item$\beta<0$: $(0,0)$ a stable focus; 
    \item$\beta=0$: supercritical Hopf bifurcation (stable fixed point $(0,0)$ loses its stability and stable limit cycle is born);
    \item $\beta>0$: $(0,0)$ unstable focus and there is also unique and attracting limit cycle around $(0,0)$ with radius $\sqrt{\beta}$. 
\end{itemize}

We examined the systems for $\beta$ parameters in the range $[-1.0,1.0]$. An example phase portrait for $\beta=1.0$ and the corresponding ECP are illustrated in Fig. \ref{fig:HB_vf_ECP}. The calculations were performed for ECP and various $Lp$ metrics, and the resulting dendrograms can be seen in Fig. \ref{fig:dendrograms}. The performance of ECP to detect changes in dynamics were compared with methods of Conley Index where we check for which value of the parameter $\beta$ the Conley index change. Sample Morse sets for $\beta=-1.0$ and $\beta =1.0$ are shown in Fig. \ref{fig:morse_sets}.

\begin{figure}[h]
    \centering
    \subfigure[]{\includegraphics[width=0.47\textwidth]{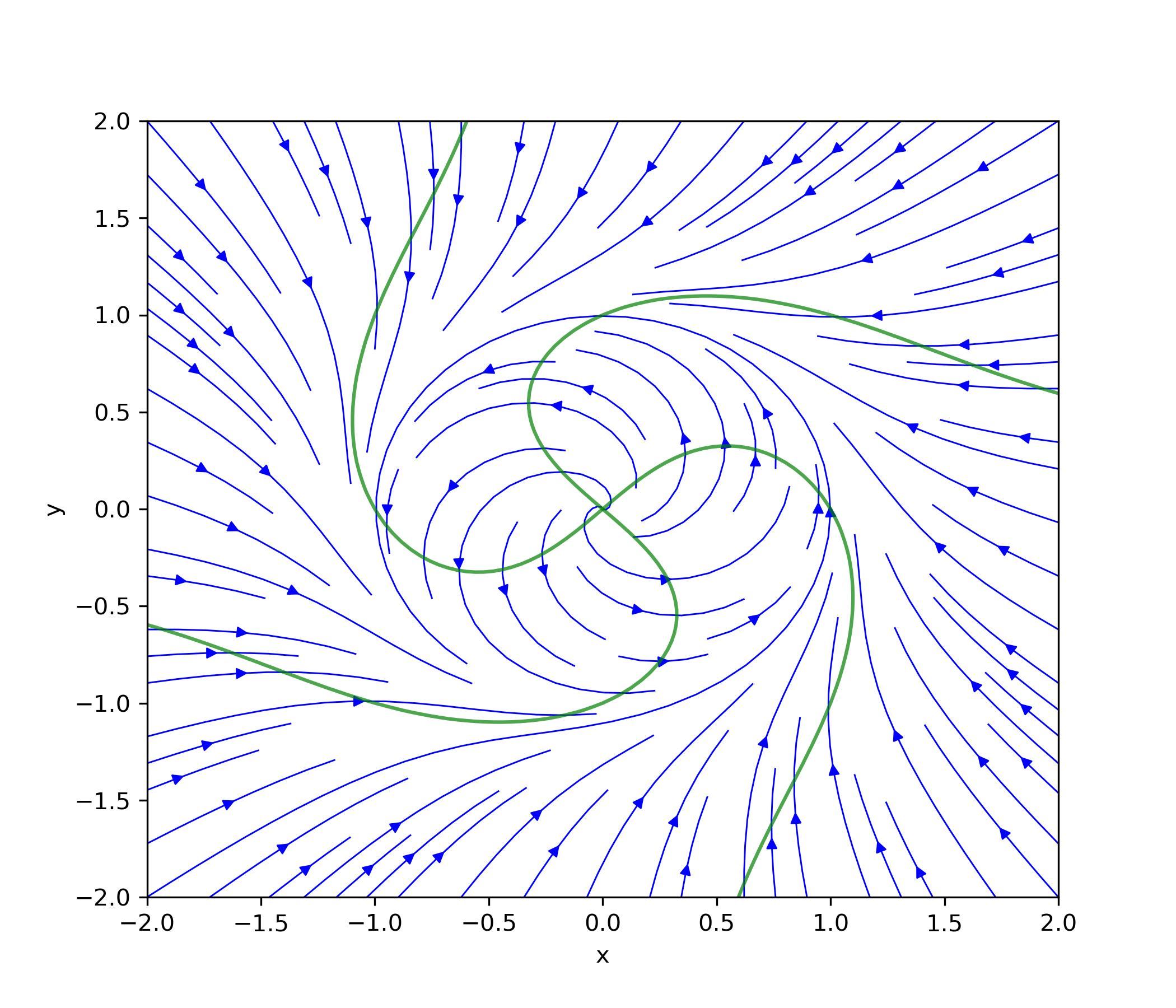}}
    \subfigure[]{\includegraphics[width=0.47\textwidth]{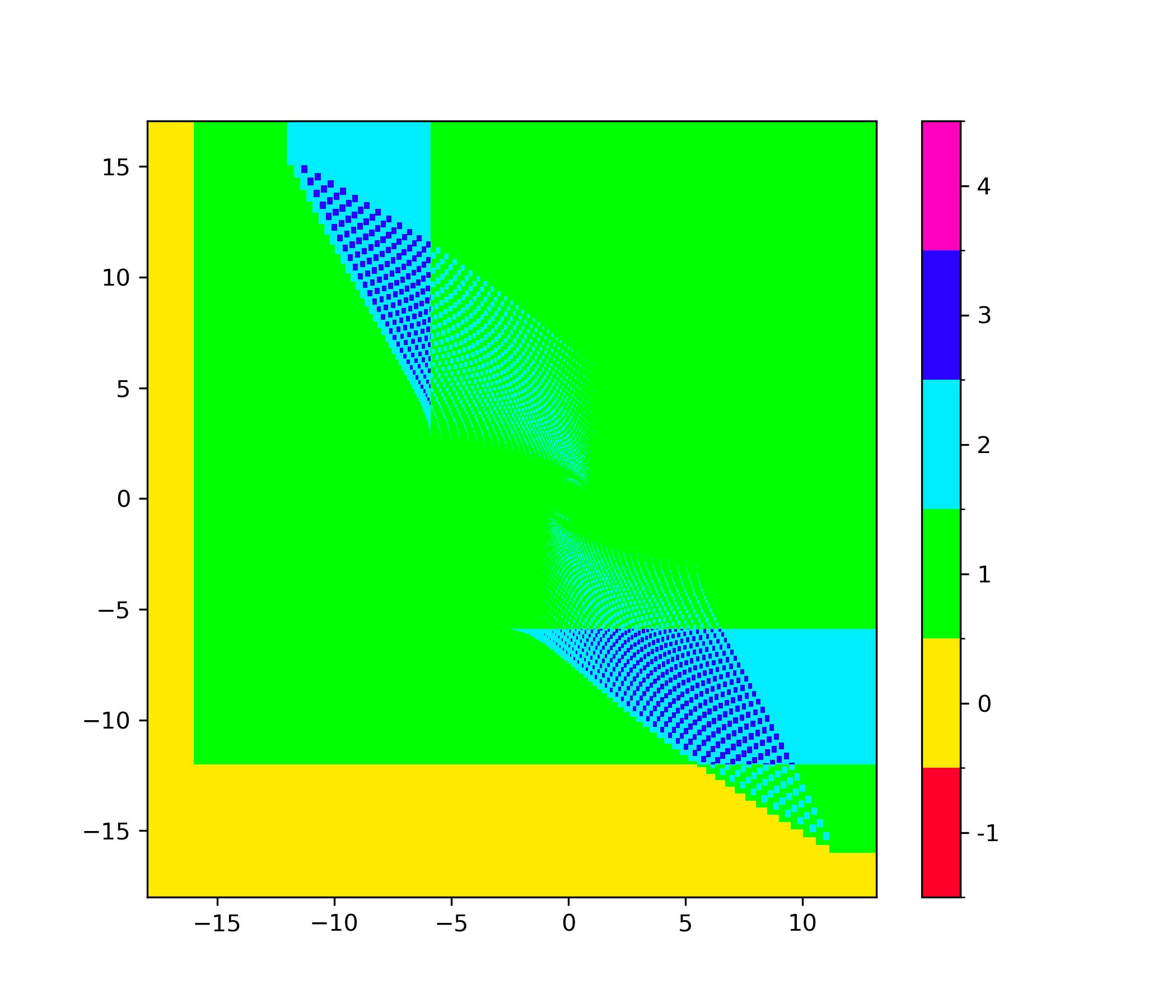}}
    \caption{Representative example of the Hopf bifurcation for $\beta=1$: (a) phase portrait illustrating the stable limit cycle and (b) the corresponding Euler Characteristic Profile (ECP) computed from the associated vector field}
    \label{fig:HB_vf_ECP}
\end{figure}

\begin{figure}[h]
    \centering
    \subfigure[]{\includegraphics[width=0.35\textwidth]{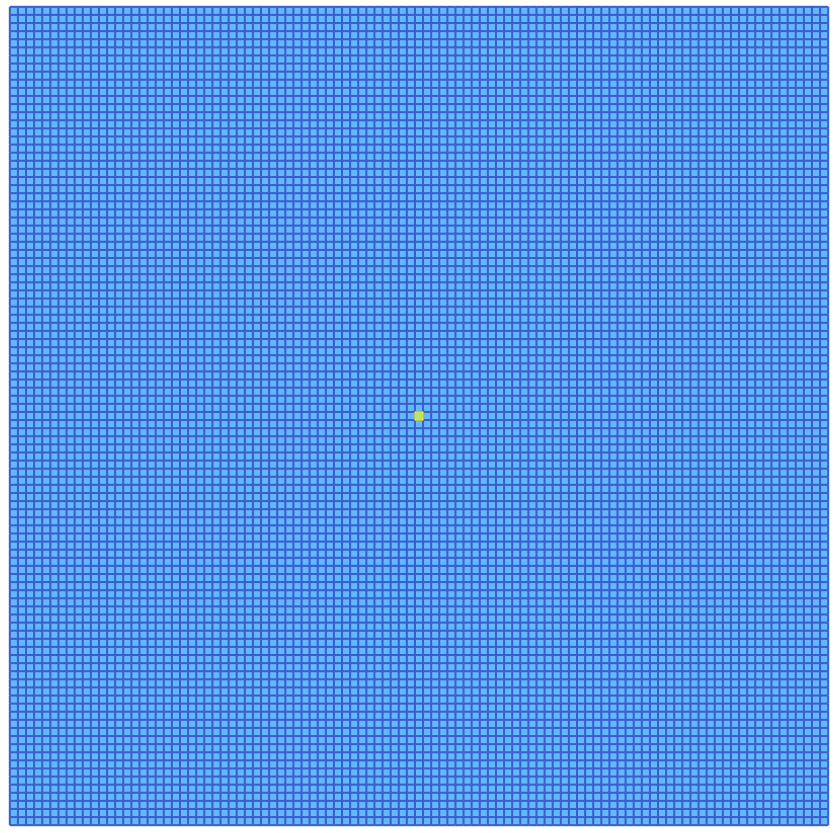}}
    \hspace{1cm}
    \subfigure[]{\includegraphics[width=0.35\textwidth]{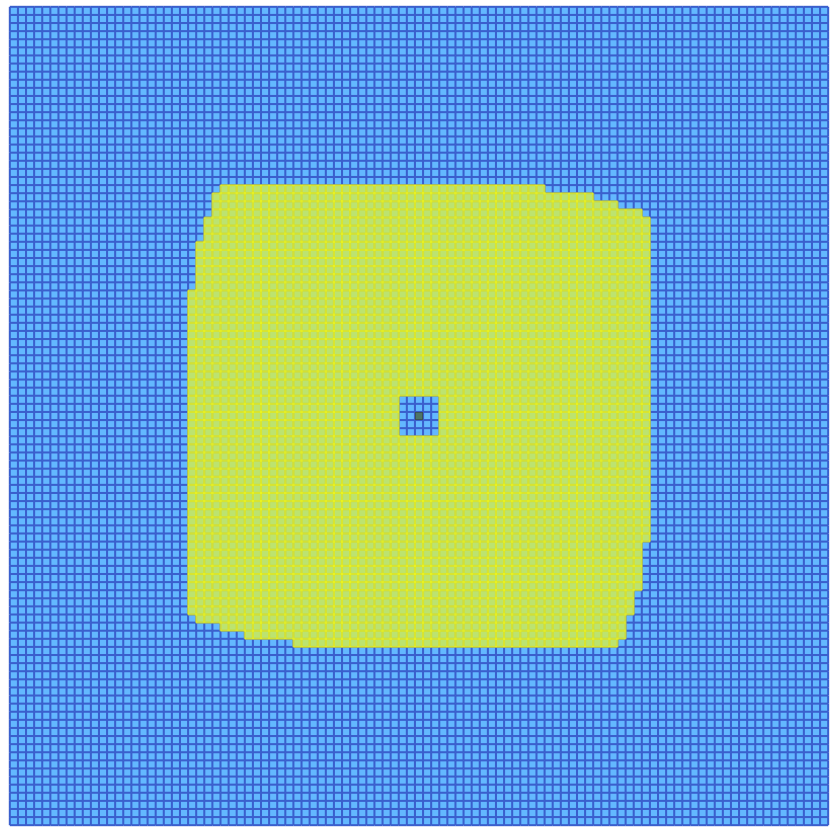}}
    \caption{Morse decompositions computed for the Hopf bifurcation system: (a) $\beta = -1$, corresponding to a stable equilibrium, and (b) $\beta = 1$, illustrating the emergence of a stable limit cycle after the bifurcation}
    \label{fig:morse_sets}
\end{figure}

\begin{figure}[h]
    \centering
    \subfigure[]{\includegraphics[width=0.46\textwidth]{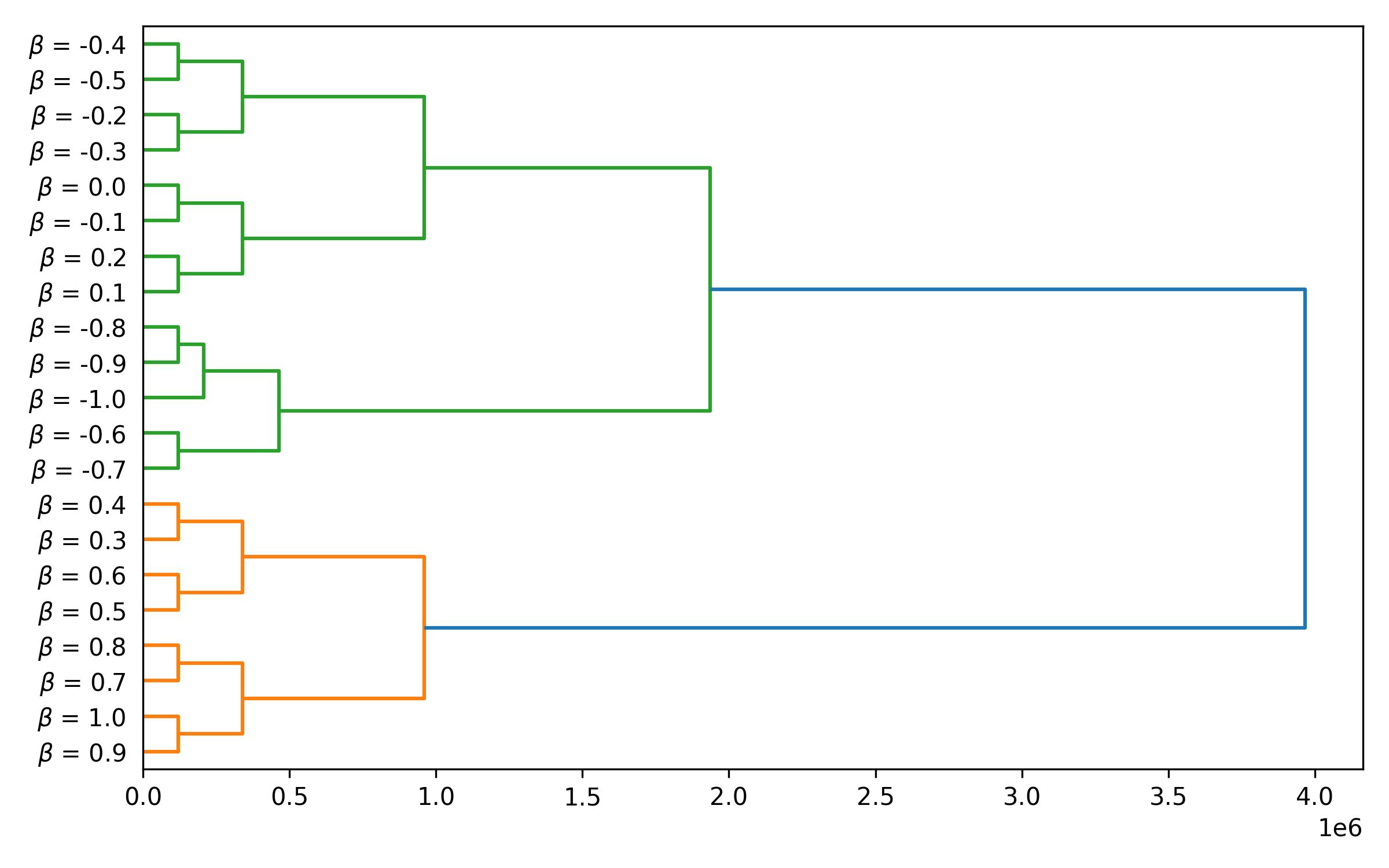}}
    \subfigure[]{\includegraphics[width=0.46\textwidth]{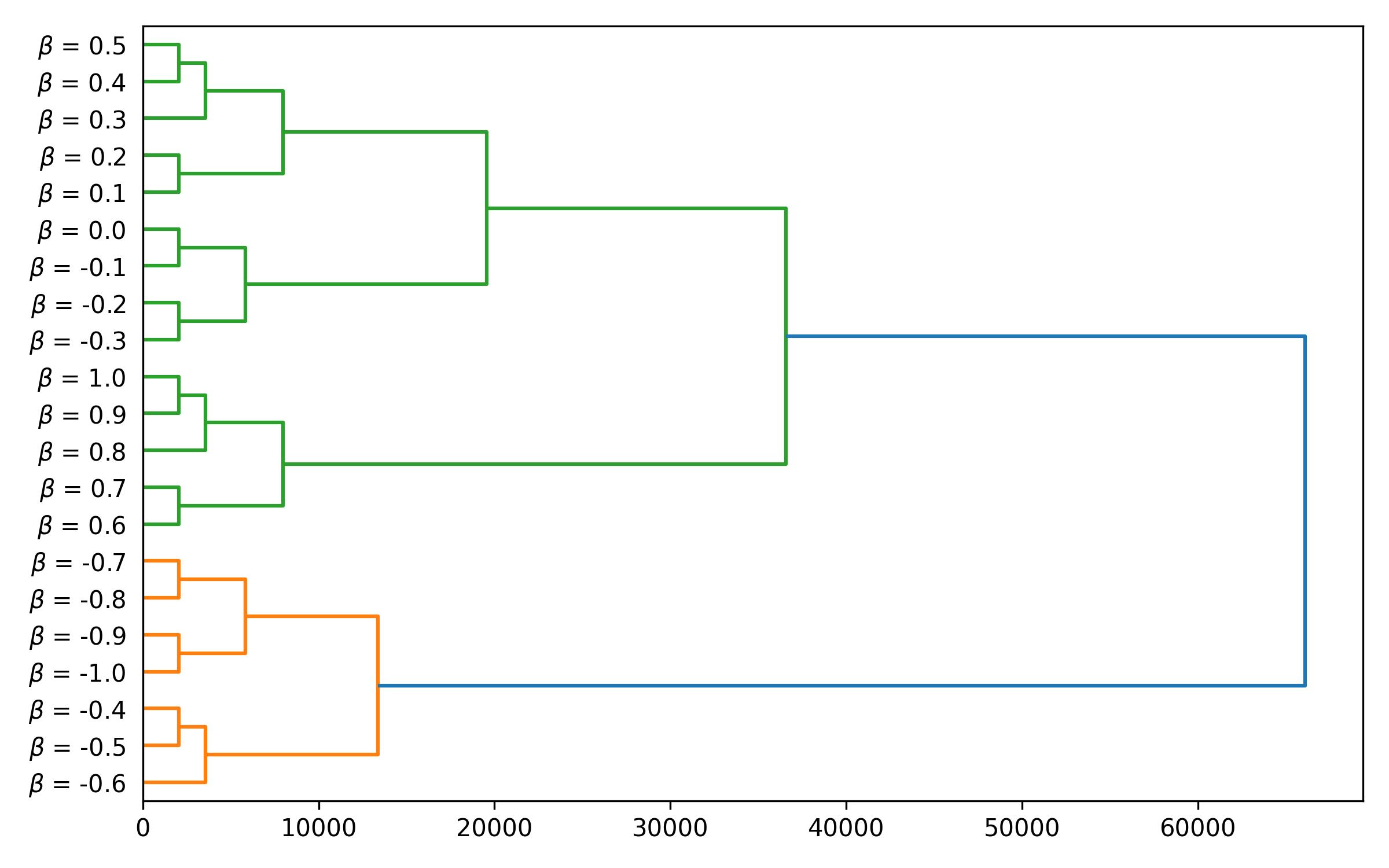}}\\
    \subfigure[]{\includegraphics[width=0.46\textwidth]{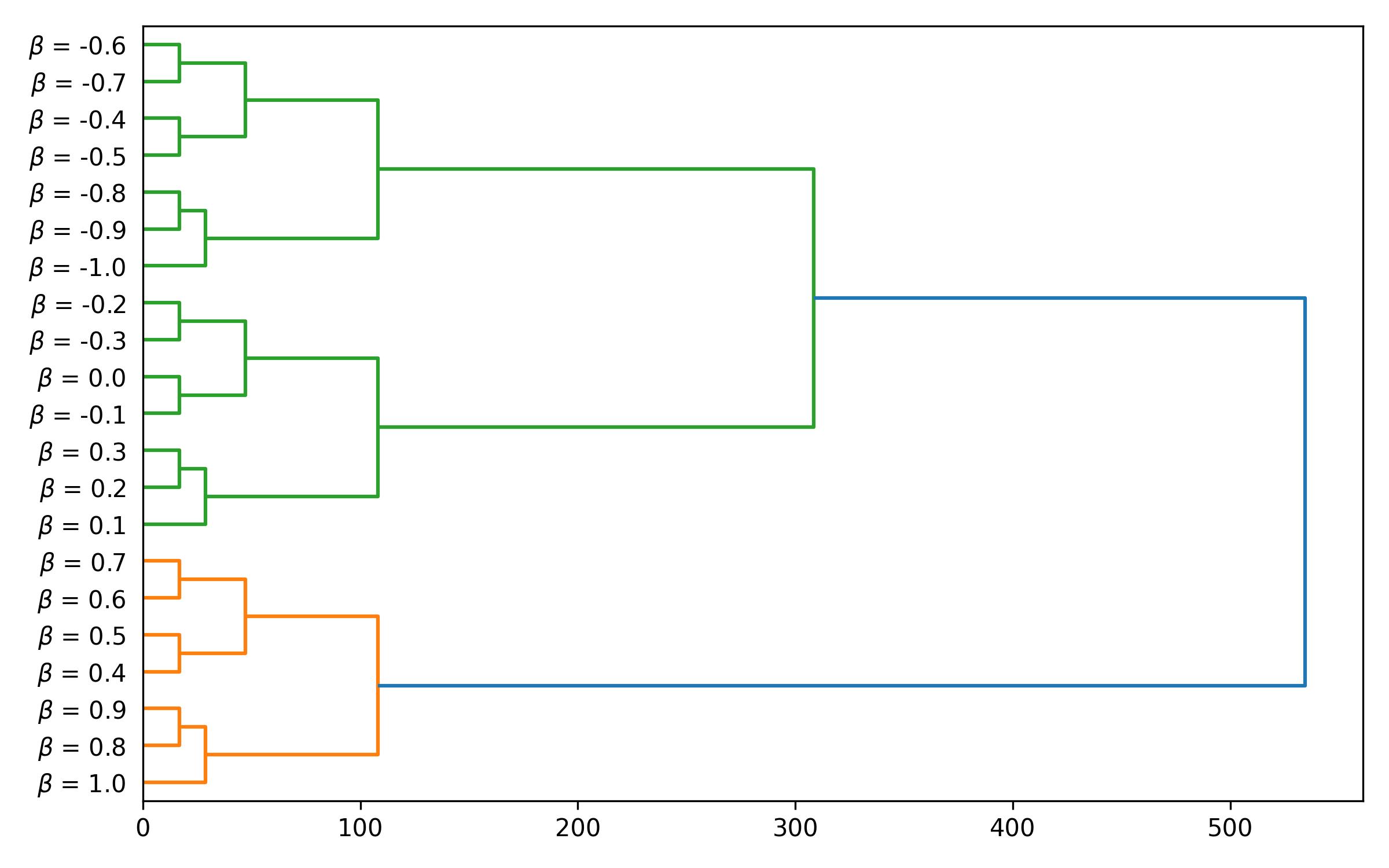}} 
    \subfigure[]{\includegraphics[width=0.46\textwidth]{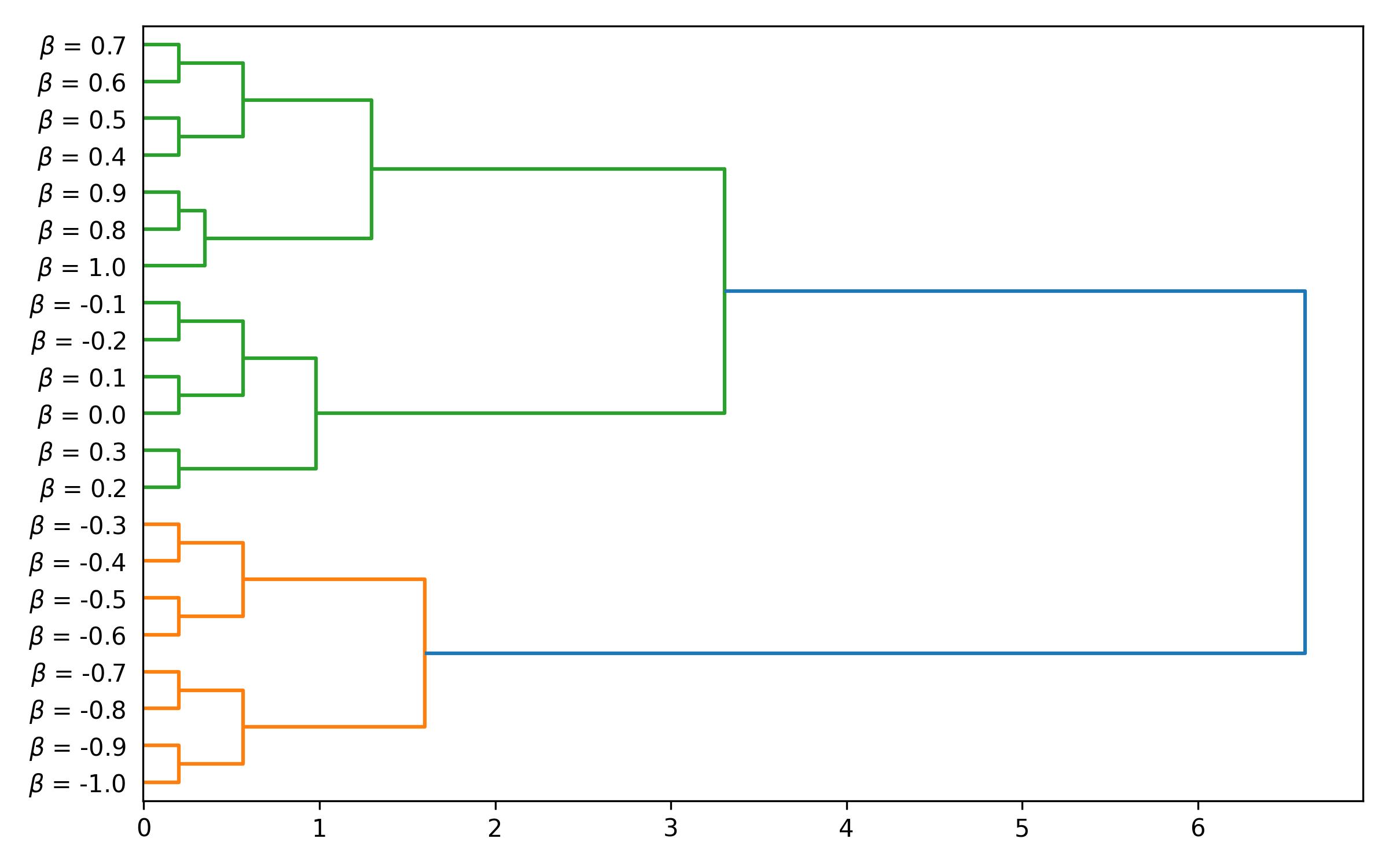}}
    \caption{Hierarchical clustering of Hopf bifurcation systems obtained using different similarity measures: (a) Euler Characteristic Profile (ECP) distance, (b) \(L_1\) distance, (c) \(L_2\) distance, and (d) \(L_{sup}\) distance. The dendrograms illustrate the ability of the proposed topological descriptors to distinguish different dynamical regimes}
    \label{fig:dendrograms}
\end{figure}

For this vector fields, the Conley index \cite{julia_code} remained constant with values $[1,0,0]$ for all parameters except $\beta=1.0$ (Fig. \ref{fig:morse_sets} (b)), where it shifted to a composite structure $[[1,1,0],[0,0,1]]$. This reflects the emergence of a limit cycle and the coexistence of multiple invariant sets at the bifurcation point. However, this index change occurs only when the system undergoes a distinct topological reorganization; it does not capture gradual or quantitative transitions in the vector field.

To obtain a finer resolution of the parameter space, the $\beta$ parameter values were further sampled densely in the range from $-1$ to $1$ with an increment of $0.1$. Even under this refined sampling, the Conley index remained constant for almost all values and exhibited the doubled structure only for $\beta=0.9$ and $\beta=1.0$. This delayed index change is notable, since the Hopf bifurcation theoretically occurs near $\beta=0$. 

In this particular discretization and computational setup, the Conley-index pipeline changed only for larger positive values of $\beta$, whereas the ECP-based clustering separated the sampled vector fields closer to the parameter range where the Hopf transition is expected.
Therefore, these results demonstrate that the Conley index, while robust in detecting qualitative topological transitions such as the appearance of a limit cycle, lacks sensitivity to early-stage or quantitative bifurcation phenomena.

For comparison, we examined the simplest data comparison, which is $L_p$ distances between ECPs and the corresponding dendrograms are shown in Fig. \ref{fig:dendrograms}. For ECP two distinct clusters are identified: the first spanning the range $b \in [-1, 0.2]$ and the second covering $b \in [0.3, 1]$. $L_1$ metric produces slightly shifted partitions, with clusters observed in the intervals $b \in [-1, -0.4]$ and $b \in [-0.3, 1]$. For $L_2$ metric, the dendrogram indicates clustering over $b \in [-1, 0.3]$ and $b \in [0.4, 1]$. Finally, $L_{sup}$ metric delineates two clusters in the ranges $b \in [-1, -0.3]$ and $b \in [0.2, 1]$. All dendrograms demonstrates that the system exhibits two main regimes of dynamical behavior, with the boundaries between clusters shifting slightly depending on the specific clustering method employed. This variation suggests that while the general bifurcation structure remains stable, the sensitivity of the clustering outcome reflects subtle differences in the underlying data representation and linkage criteria. 

Among the tested methods, the ECP approach yields the most consistent and physically meaningful clustering. The separation between $b \in [-1, 0.2]$ and $b \in [0.4, 1]$ corresponds closely to the expected bifurcation threshold and captures the transition between qualitatively different dynamical regimes. Furthermore, this method provides clear and reproducible cluster boundaries, minimizing overlap and ambiguity between the regions. 

The ECP, the Conley Index and the \(L_p\) norms represent distinct yet complementary approaches to the quantitative and qualitative characterization of dynamical systems. While the first two originate from algebraic topology and focus on invariant structures and qualitative changes, the \(L_p\) measure geometric deviations in a purely metric sense.

The ECP provides a low-dimensional yet informative topological summary of data by tracking changes in the Euler characteristic as a function of a filtration parameter. It efficiently captures structural transitions within scalar or vector fields and is computationally inexpensive. Moreover, the ECP exhibits moderate robustness to stochastic perturbations: while local fluctuations may distort the curve, global topological trends remain preserved. Its main limitation lies in the aggregation of homological information into a single scalar descriptor, resulting in loss of fine structural distinctions and the absence of explicit dynamical directionality.

The Conley Index, in contrast, offers a rigorous algebraic invariant associated with isolated invariant sets. It detects bifurcations and qualitative changes in the topology of attractors or repellers and remains invariant under continuous perturbations of the vector field. This topological stability makes it particularly valuable in theoretical studies of nonlinear dynamics. However, the method is computationally demanding, requiring the explicit construction of isolating neighborhoods and Morse decompositions and is often challenging to interpret for high-dimensional or noisy systems.

The $L_p$ norms ($L_1, L_2, L_{inf}$) provide a purely quantitative metric of similarity or deviation between functions, signals, or vector fields. They measure the magnitude of difference in an integral sense and are straightforward to compute and interpret. Unlike the ECP or Conley Index, they are entirely insensitive to topological or qualitative changes, reflecting only geometric deviations in amplitude or energy. Furthermore, \(L_p\) norms are sensitive to noise and scaling, particularly for low values of \(p\).

In summary, the Conley Index provides the highest degree of topological rigor but at substantial computational cost; the ECP offers an interpretable and noise-tolerant topological descriptor suitable for comparative analysis; and the \(L_p\) norms serve as efficient geometric measures for quantitative comparison. The combined use of ECP and \(L_p\) norms can bridge the gap between topological insight and numerical similarity, providing a balanced framework for analyzing complex or noisy dynamical systems.

Let us compare the above results with those obtained using the DoD and BEPE methods. 
In the first method, each vector field is projected onto the unit circle by normalising the displacement vectors $f(x)$. The resulting angles $\{\theta_i\}_{i=1}^N$ represent the directional distribution of the flow. To obtain a smooth representation of this distribution, a circular kernel density estimate is constructed using the von~Mises kernel.

Rather than estimating the concentration parameter $\kappa$ directly from the data, a fixed value is used for all parameter examples. This choice is motivated by empirical observations. While data-driven estimates of $\kappa$, for example those based on the mean resultant length or cross-validation, are statistically well-founded, they often yield values close to zero. In such cases, the resulting densities become nearly uniform, effectively suppressing meaningful directional structure. In contrast, moderate fixed values of $\kappa$ produce more informative and visually interpretable density profiles.

The angular resolution of the density is controlled by the number of sampling points used to discretise the interval $[-\pi,\pi)$. In our experiments we considered several resolutions (180, 360, 540 and 720 sampling points). In practice, this parameter was found to have negligible influence on the clustering structure. Increasing the number of bins refines the discretizations of the density function but does not alter its underlying shape. As a result, pairwise distances between densities change only marginally, leaving the hierarchical clustering invariant.

For each parameter configuration, the density is evaluated on a common angular grid, resulting in a discrete representation of $\rho$. The densities are then normalised and treated as feature vectors. Pairwise Euclidean distances between these vectors are computed and used as input for hierarchical clustering with Ward's linkage. 

The resulting function $\rho(\Theta)$ represents the estimated probability density of vector-field directions on the unit circle and constitutes the descriptor used in the subsequent clustering analysis.

A key observation is that the number of angular bins affects only the numerical approximation of the density, whereas the concentration parameter $\kappa$ fundamentally changes its structure. This distinction explains the empirical behaviour observed in our experiments: the dendrogram topology remains stable under changes in the number of bins, while it can vary significantly with $\kappa$, particularly for systems lacking rotational symmetry. For highly symmetric systems, such as system with Hopf Bifurcation, the method becomes largely insensitive to $\kappa$, leading to stable clustering results across a wide range of parameter values.

\begin{figure}[h]
    \centering
    \subfigure[]{\includegraphics[width=0.49\linewidth]{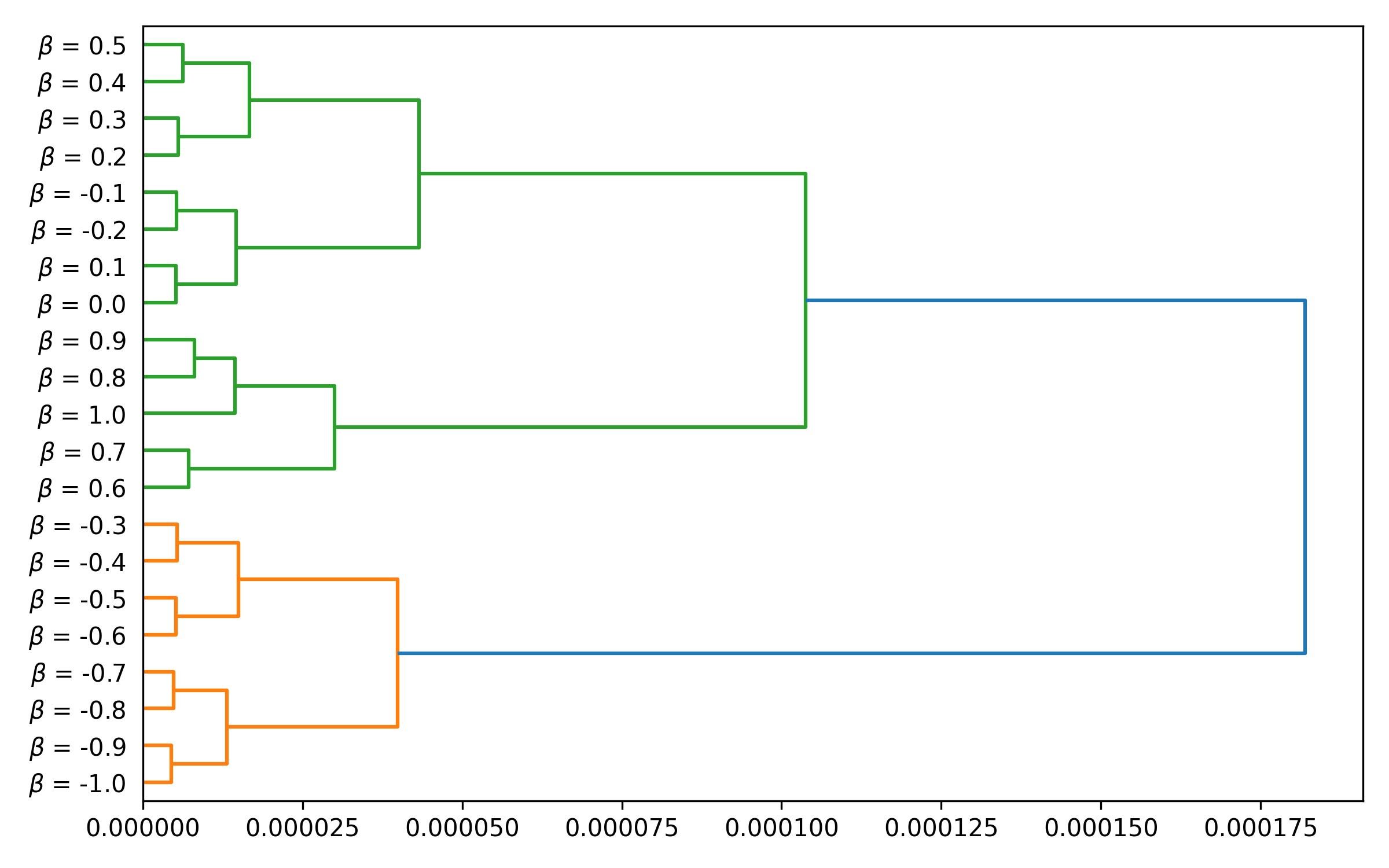}}
    \subfigure[]
    {\includegraphics[width=0.49\linewidth]{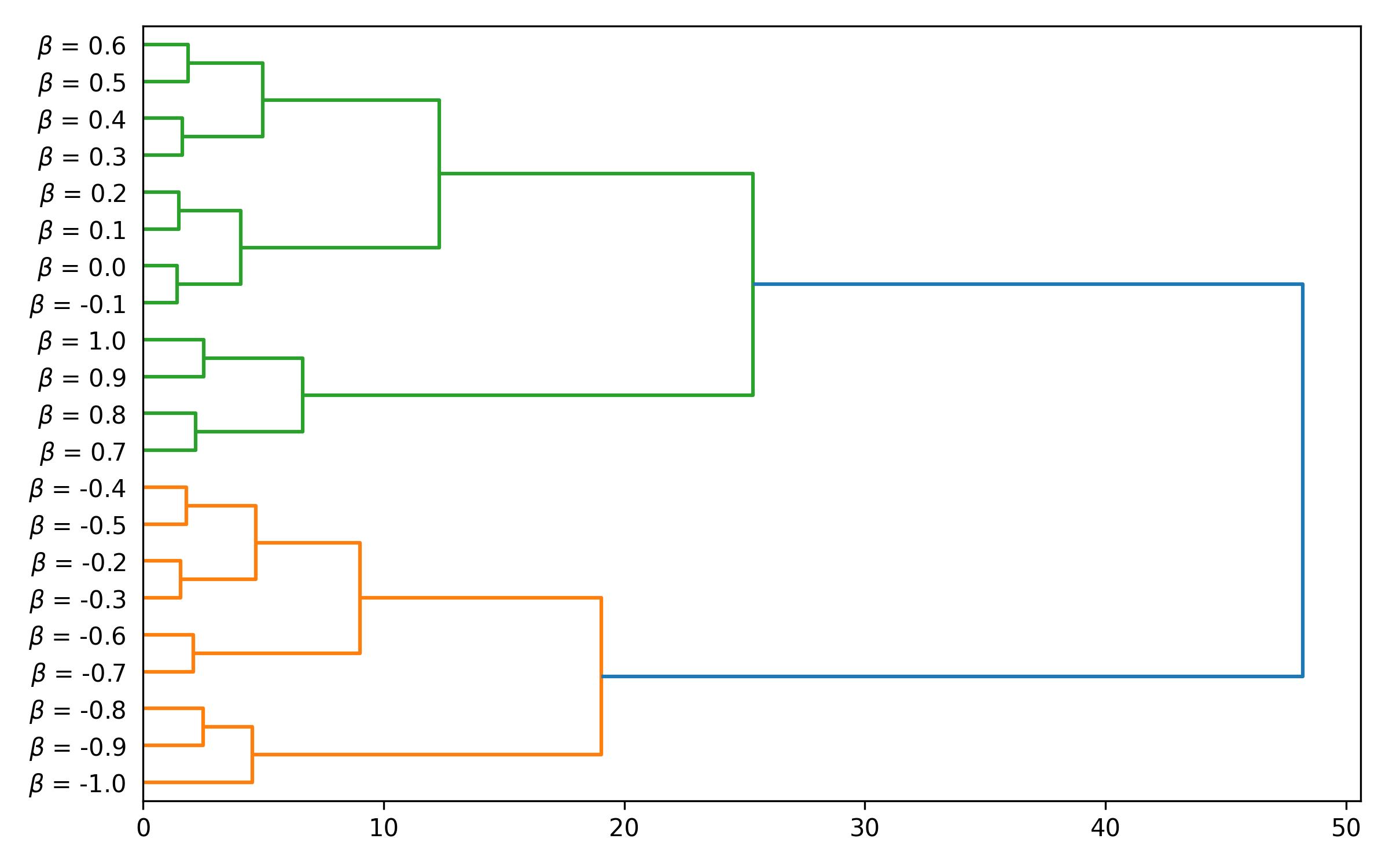}}
    \caption{Hierarchical clustering of Hopf bifurcation systems based on the proposed vector-field descriptors: (a) Density of Directions (DoD) and (b) Begin–End Point Embedding (BEPE). The vector fields were sampled on a 51x51 computational grid} 
    \label{fig:HB_Dod}
\end{figure}

In BEPE method, every grid point $(x,y)$ is mapped to a four-dimensional vector $(x, y, f_x, f_y)$ containing both the spatial location and the corresponding vector field value. This produces a point cloud in $\mathbb{R}^4$ representing the full field in a way suitable for transport-based comparisons.

Once the point clouds are constructed, we compute pairwise dissimilarities between all parameter instances using the EMD. Each cloud is treated as a uniform empirical distribution, and the ground cost is defined via Euclidean distances in $\mathbb{R}^4$. Solving the optimal transport problem for every pair yields a symmetric distance matrix that quantifies how much geometric deformation is required to transform one vector field into another. EMD distances are presented on a dendrogram Fig. ~\ref{fig:HB_Dod}(b).

The EMD-based dendrograms obtained with the BEPE method reveal a clear and physically meaningful organization of the parameter space. In particular, the hierarchical clustering distinctly separates vector fields corresponding to parameter regimes before and after the bifurcation. The method therefore captures qualitative structural transitions in the vector field in a robust and interpretable manner.

Despite these strong results, the BEPE approach has several inherent limitations. The computation of the EMD between point clouds is computationally demanding, with complexity increasing rapidly with the number of grid points, which restricts the applicability of the method to moderate spatial resolutions unless approximate optimal transport solvers are employed. Furthermore, the embedding of vector fields into $\mathbb{R}^4$ couples spatial coordinates and vector values through a fixed Euclidean ground cost, implicitly assuming that spatial displacements and differences in vector magnitude or direction are commensurable. While this assumption proved effective in the present setting, it may not be universally valid across different physical systems or scaling regimes. In addition, assigning equal mass to all grid points causes regions of weak flow to contribute equally to the transport cost as dynamically dominant regions, potentially diluting the influence of physically relevant structures.

We also investigated the dependence of the proposed descriptors on the resolution of the underlying spatial discretization. In particular, computations were performed on grids of varying sizes. The resulting dendrograms and induced cluster structures were found to be consistent across resolutions, yielding the same qualitative classification of the considered vector fields. 
This empirical observation indicates that the proposed framework is robust with respect to the choice of discretization and suggests that the extracted topological signatures capture intrinsic features of the dynamics rather than artifacts of the numerical grid.

\subsection{FitzHugh-Nagumo Model}
We consider a simplified model of a spiking neuron based on the FitzHugh-Nagumo system, which captures the essential qualitative features of excitability and oscillatory behavior observed in biological neurons. The model exhibits both resting states and periodic oscillations, making it a useful example for studying nonlinear dynamics. The system is given by:
\begin{equation}\label{eq:fhnagumo}
\begin{aligned}
\dot{x}=x-\frac{x^3}{3}-y+R I,\\
\dot{y}=\frac{1}{\tau} (x+ a - b y),
\end{aligned}
\end{equation}
where $x$ is membrane voltage, $y$ is linear recovery variable, $I$ is external stimulus.
We study the FitzHugh-Nagumo system with fixed parameters $a=0.7$, $\tau=12.5$, $R=0.1$, and we vary the parameters $b$ and the input current~$I$. Fig.~\ref{fig:FHN} a shows the phase profile of the system for the parameters and the corresponding ECP. Below we list the parameter sets considered, together with the corresponding fixed point structure and qualitative dynamics. The classification is based on the eigenvalues of the Jacobian at each equilibrium.

\begin{figure}[h]
    \centering
    \subfigure[]{\includegraphics[width=0.43\textwidth]{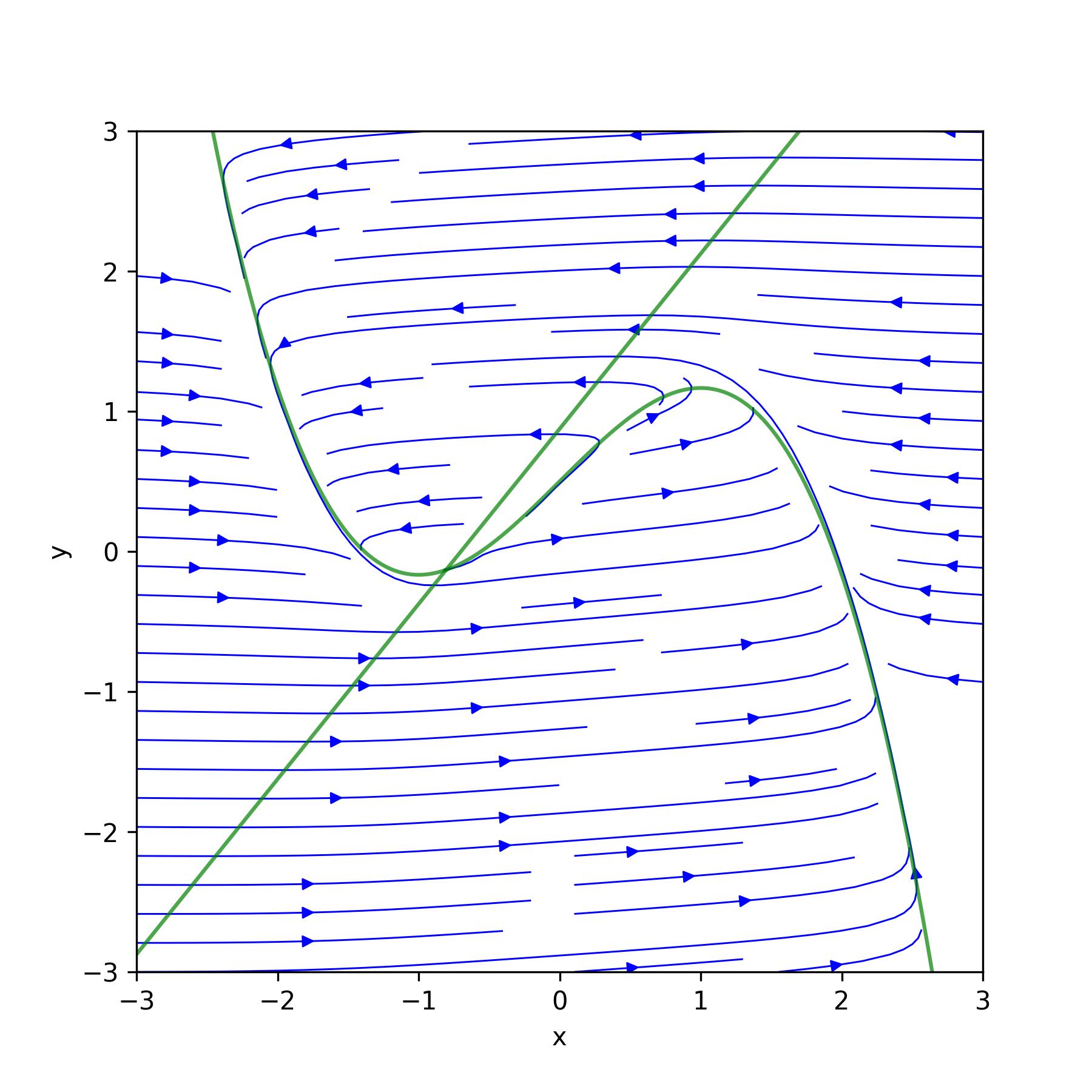}}
    \subfigure[]{\includegraphics[width=0.50\textwidth]{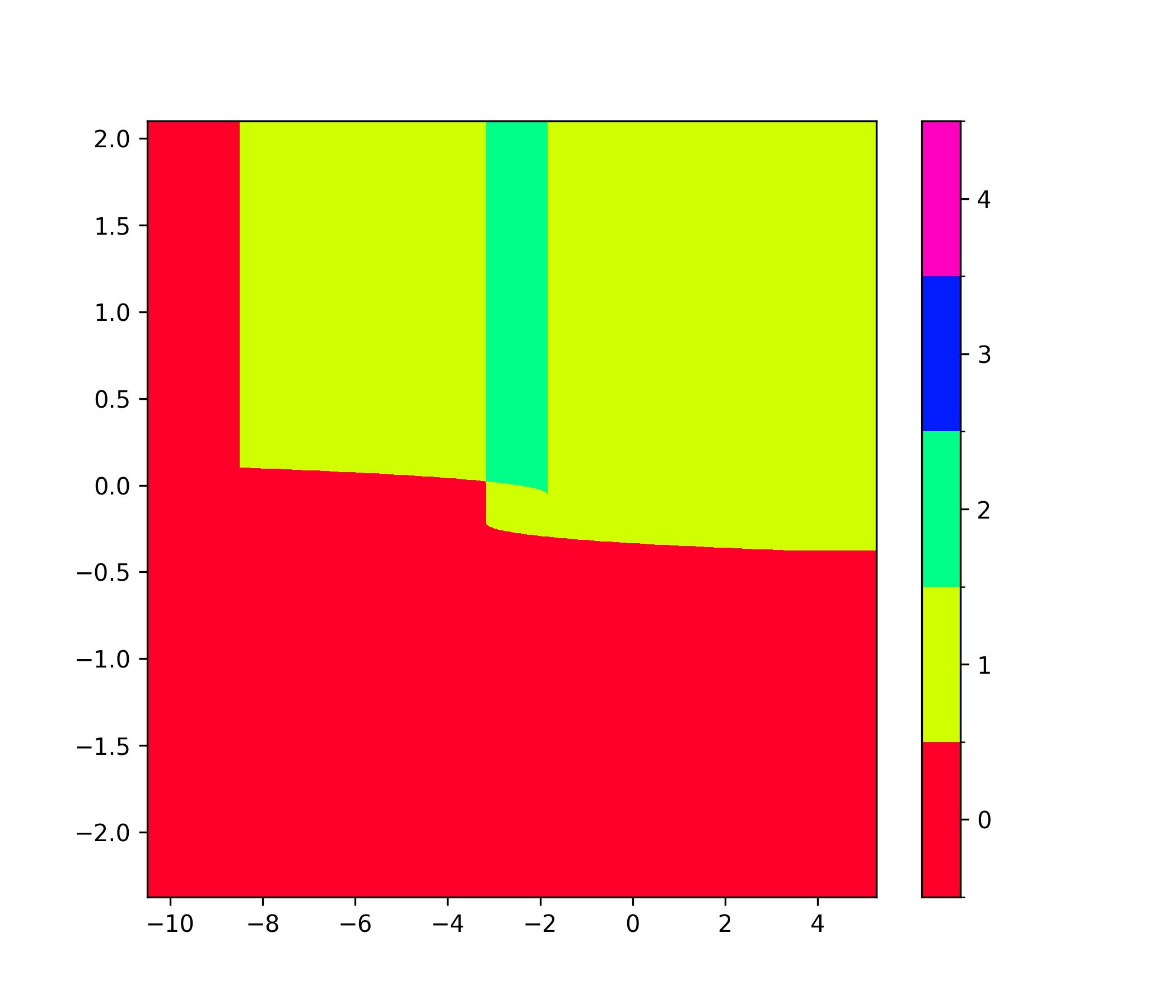}}
    \caption{(Representative phase portrait of the FitzHugh–Nagumo model for $b= 1$ and $I=5.00$: (a) vector field and trajectories, and (b) the corresponding Euler Characteristic Profile (ECP)}
    \label{fig:FHN}
\end{figure}

\begin{enumerate}

\item \textbf{$b=0.5$, $I=3$}  
    (unstable focus; stable limit cycle expected).  
    \\
    The system has a single equilibrium at $(x^*,y^*) \approx (-0.8760,\,-0.3519)$ with eigenvalues  
    $\lambda_{1,2} \approx 0.0964 \pm 0.2478\, i$.  
    Hence the equilibrium is an \emph{unstable focus}, and the dynamics typically exhibits a surrounding \emph{stable periodic orbit}.

\item \textbf{$b=0.8$, $I=-16$}  
    (single stable equilibrium on the left branch).  
    \\
    A unique equilibrium occurs at $(x^*,y^*) \approx (-1.8229,\,-1.4037)$ with eigenvalues  
    $\lambda_1 \approx -2.2871$, $\lambda_2 \approx -0.1000$.  
    Both eigenvalues are negative, so the fixed point is a \emph{stable node}.

\item \textbf{$b=0.8$, $I=0$}  
    (single stable focus; excitable regime).  
    \\
    The system has a unique equilibrium at $(x^*,y^*) \approx (-1.1994,\,-0.6243)$ with eigenvalues  
    $\lambda_{1,2} \approx -0.2513 \pm 0.2119\, i$.  
    This is a \emph{stable focus}, placing the system in a \emph{resting excitable regime}.

\item \textbf{$b=0.8$, $I=5$}  
    (unstable focus; oscillatory regime).  
    \\
    The single equilibrium $(x^*,y^*) \approx (-0.8048,\,-0.1311)$ has eigenvalues  
    $\lambda_{1,2} \approx 0.1441 \pm 0.1916\, i$.  
    It is therefore an \emph{unstable focus}, and the system typically supports a surrounding \emph{stable limit cycle} (post-Hopf oscillatory behavior).

\item \textbf{$b=0.8$, $I=10$}  
    (single unstable node; strongly driven regime).  
    \\
    A unique equilibrium occurs at $(x^*,y^*) \approx (0.4089,\,1.3861)$, with eigenvalues  
    $\lambda_1 \approx 0.7324$, $\lambda_2 \approx 0.0365$.  
    Both are positive, identifying the fixed point as an \emph{unstable node}.  
    Trajectories are repelled from the equilibrium, leading to large-amplitude nonlinear excursions.

\item \textbf{$b=1.5$, $I=4$}  
    (bistability: stable focus, saddle, stable node).  
    \\
    Three equilibria exist:
    \[
    \begin{aligned}
        (x^*_1,y^*_1) &\approx (-1.0880,\,-0.2587), && \text{stable focus},\\
        (x^*_2,y^*_2) &\approx (-0.2058,\,0.2471), && \text{saddle},\\
        (x^*_3,y^*_3) &\approx (1.3146,\,1.0073), && \text{stable node}.
    \end{aligned}
    \]
    This configuration yields \emph{bistability} with two attracting steady states separated by the saddle.

\item \textbf{$b=2$, $I=3.5$}  
    (bistability: two stable foci and one saddle).  
    \\
    Three equilibria are present:
    \[
    \begin{aligned}
        (x^*_1,y^*_1) &\approx (-1.2247,\,-0.2624), && \text{stable focus},\\
        (x^*_2,y^*_2) &\approx (0.0,\,0.35), && \text{saddle},\\
        (x^*_3,y^*_3) &\approx (1.2247,\,0.9624), && \text{stable focus}.
    \end{aligned}
    \]
    The system is therefore \emph{bistable}, with two oscillatory-type attractors.

\item \textbf{$b=2$, $I=4.2$}  
    (bistability: two stable foci and one saddle).  
    \\
    Three equilibria exist: the outer equilibria are \emph{stable foci}, and the central equilibrium is a \emph{saddle}.  
    The phase portrait again exhibits \emph{bistability} with two basins of attraction.

\item \textbf{$b=2$, $I=5.45$}  
    (bistability: stable focus, stable node, and central saddle).  
    \\
    The system admits three equilibria:
    \[
    \begin{aligned}
        (x^*_1,y^*_1) &\approx (-0.9351,\,-0.1175), && \text{stable focus},\\
        (x^*_2,y^*_2) &\approx (-0.4513,\,0.1244), && \text{saddle},\\
        (x^*_3,y^*_3) &\approx (1.3864,\,1.0432), && \text{stable node}.
    \end{aligned}
    \]
    This is again a \emph{bistable} regime, with the saddle determining the separatrix.  
    An \emph{unstable limit cycle} can also occur around the saddle, a configuration typical for FitzHugh-Nagumo dynamics.
\end{enumerate}

The results of ECP clustering depending on the $(b,I)$ parameters are shown in Fig. ~\ref{fig:FHN_dendrogram}.

\begin{figure}[h]
    \centering
    \subfigure[]{\includegraphics[width=0.46\textwidth]{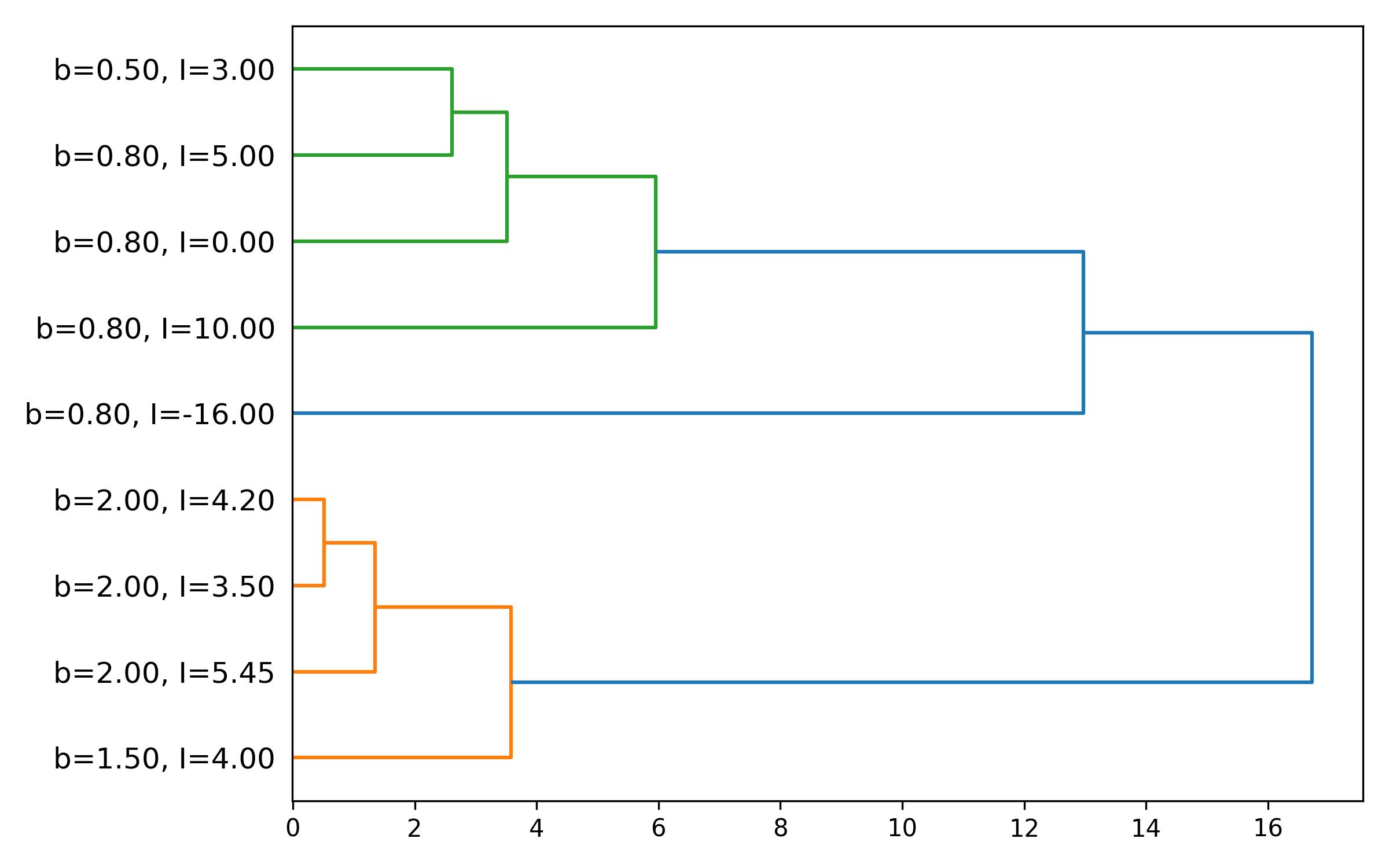}} 
    \subfigure[]{\includegraphics[width=0.46\textwidth]{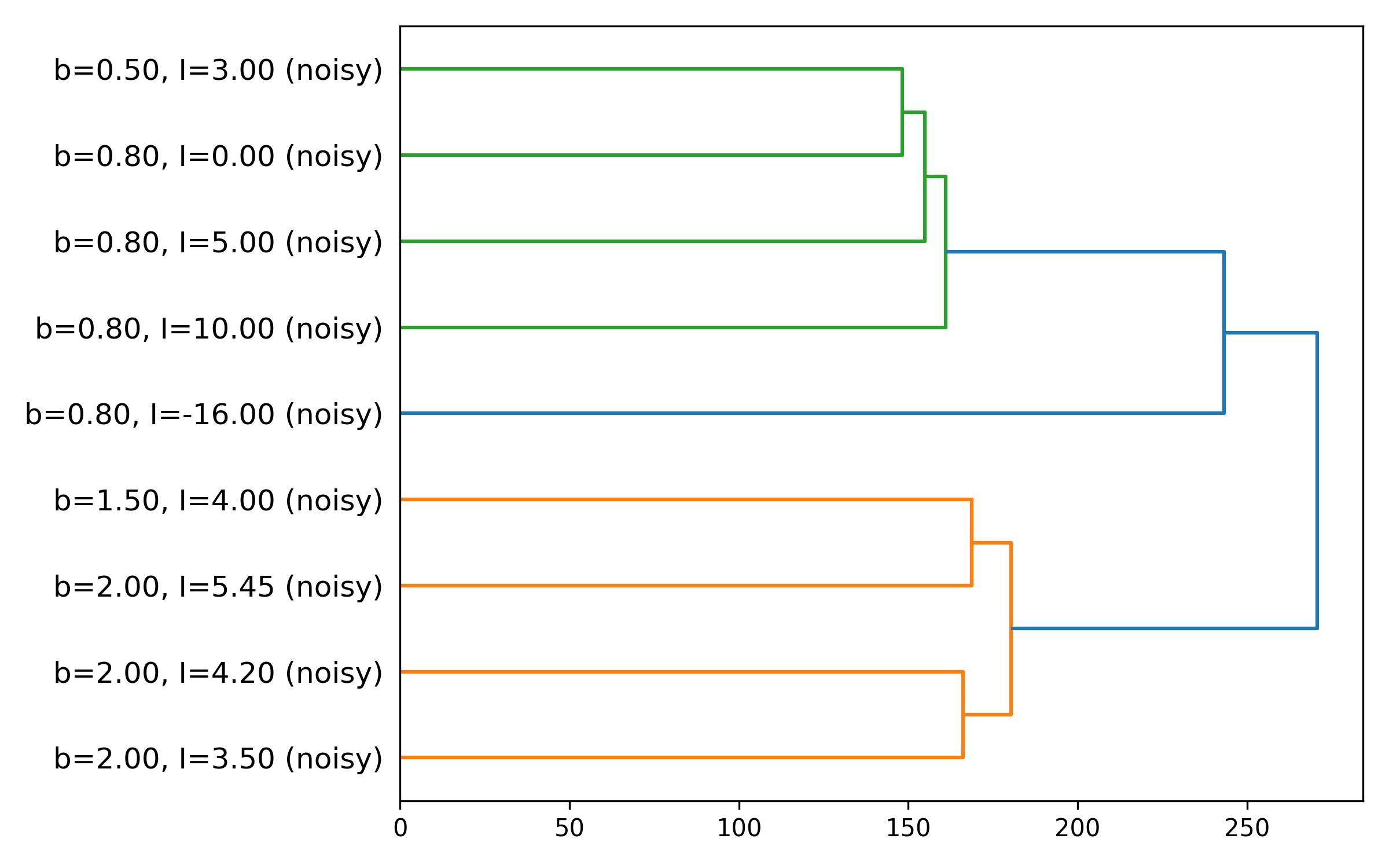}}
    \caption{Hierarchical clustering of FitzHugh–Nagumo systems using Euler Characteristic Profiles (ECPs) computed from (a) noise-free data and (b) data contaminated with additive Gaussian noise. The results demonstrate the robustness of the proposed descriptors with respect to measurement perturbations}
    \label{fig:FHN_dendrogram}
\end{figure}

The resulting dendrogram revealed a clear and interpretable organization of the parameter space. The first cluster to emerge consisted of the parameter sets $(0.50,3.00)$, $(0.80,5.00)$, $(0.80,0.00)$, and $(0.80,10.00)$. Despite differences in their exact stability properties, all four cases shared the same global dynamical structure: each system possessed a single equilibrium point that was dynamically unstable (either an unstable focus or an unstable node), implying that trajectories generically evolved toward oscillatory or runaway activity. These systems therefore correspond to monostable but oscillatory regimes.

As the clustering threshold increased, the parameter set $(0.80,-16.00)$ joined this group. This case also exhibited a single equilibrium, but in contrast to the previously mentioned systems the equilibrium was a stable node. Its later joining indicates partial similarity in equilibrium structure (monostability) but also a clear dynamic distinction due to its purely attracting behavior and the absence of oscillatory instability. The hierarchical organization therefore reflects a continuum within the monostable region of parameter space, ranging from strongly oscillatory regimes (unstable foci) to non-oscillatory.

Only at a substantially higher linkage distance did this combined monostable cluster merge with a second, well-separated cluster containing the parameter sets $(2.00,4.20)$, $(2.00,3.50)$, $(2.00,5.45)$, and $(1.50,4.00)$. These systems all exhibited three equilibrium points and therefore shared a qualitatively different phase-space topology. In each case, the system contained at least one saddle point and one or two stable attractors, leading to bistability or mixed stability configurations. The fact that these four parameter sets merged into a single cluster prior to joining the monostable group highlights that their multistable geometric organization is dynamically far more similar to each other than to any of the monostable systems.

Overall, the hierarchical clustering reflects two dominant dynamical regimes: a monostable regime characterized by either oscillatory instability or stable contraction, and a multistable regime characterized by the coexistence of multiple attractors separated by saddle points. The large linkage distance between these regimes indicates a qualitatively sharp distinction in their phase-space geometry. These results demonstrate that the $(b,I)$ parameter space naturally partitions into regions corresponding to fundamentally different computational capabilities of the model.

To evaluate the stability of the clustering with respect to measurement variability, we repeated the analysis using noise-contaminated trajectories. Noise was added directly to the simulated data in the form of additive Gaussian perturbations whose amplitude was proportional to the empirical standard deviation of the clean signal. More precisely, each data $X$ was transformed into
\[
\widetilde{X} = X + 0.05\,\sigma_X\,\eta,
\qquad \eta \sim \mathcal{N}(0,1),
\]
where $\sigma_X$ denotes the component-wise standard deviation of the noise-free data and the factor $0.05$ specifies the noise intensity. Remarkably, the resulting clustering was qualitatively identical to the one obtained from the original trajectories: although some individual parameter pairs exhibited minor shifts within branches of the dendrogram, the overall cluster structure and the separation between monostable and multistable regimes were fully preserved. This demonstrates that the proposed clustering approach is robust against moderate levels of additive Gaussian noise and reliably captures the underlying dynamical organization of the system.

\subsection{Lorenz System}
The Lorenz system is a classical model of deterministic chaos, originally introduced by Edward Lorenz in 1963 \cite{lorenz1963} to describe simplified thermal convection in the atmosphere. It is a classic example of chaotic dynamics, known for its sensitivity to initial conditions and the presence of strange attractors (see also \cite{tucker1999}).  In its vector field formulation, the system can be expressed as:

\begin{equation}
\label{eq:lorenz}
\begin{aligned}
\dot{x} &= \sigma (y - x), \\
\dot{y} &= x(\rho - z) - y, \\
\dot{z} &= xy - \beta z,
\end{aligned}
\end{equation}
where:
\begin{itemize}
  \item $x$ represents the convective motion intensity (the rate of fluid circulation),
  \item $y$ corresponds to the horizontal temperature difference,
  \item $z$ denotes the vertical temperature deviation from equilibrium,
  \item $\sigma$ is the Prandtl number, which characterizes the ratio between momentum and thermal diffusivity,
  \item $\beta$ is a geometric factor related to the aspect ratio of the convection layer,
  \item $\rho$ is the Rayleigh number, which acts as a control parameter determining the onset of convection.
\end{itemize}

For small values of $\rho$, the system exhibits a single stable equilibrium, representing a motionless fluid state. As $\rho$ increases beyond a critical threshold, a pair of symmetric nontrivial equilibria appear via a pitchfork bifurcation, corresponding to steady convection rolls. Further increase of $\rho$ leads to the onset of chaotic dynamics through a Hopf bifurcation and subsequent period-doubling bifurcations, giving rise to the well-known Lorenz attractor.

In the present study, the parameters were fixed as $\sigma = 10$ and $\beta = \tfrac{8}{3}$, while the bifurcation parameter $\rho$ was varied within the range $\rho \in [24.0, 28.0]$.  The three-dimensional computational domain was defined as $x, y \in [-20, 20]$, $ z \in [0, 40]$ and discretized into a uniform $21 \times 21 \times 21$ mesh of points $(X, Y, Z)$, over which the vector field was evaluated for each $\rho_i$. To clarify, the numerical examples are performed on a deliberately small computational grid, allowing the behavior of the proposed method to be illustrated in a transparent manner. To verify that the observed results are not an artifact of the chosen discretization, additional simulations were carried out on finer meshes. These computations confirmed that the qualitative features of the solutions remain unchanged, with the same classification consistently recovered. Consequently, the coarse-grid examples shown here should be regarded as illustrative, while the proposed method exhibits the same qualitative behavior on significantly larger computational grid.

\begin{figure}[h]
    \centering
    \includegraphics[width=0.7\linewidth]{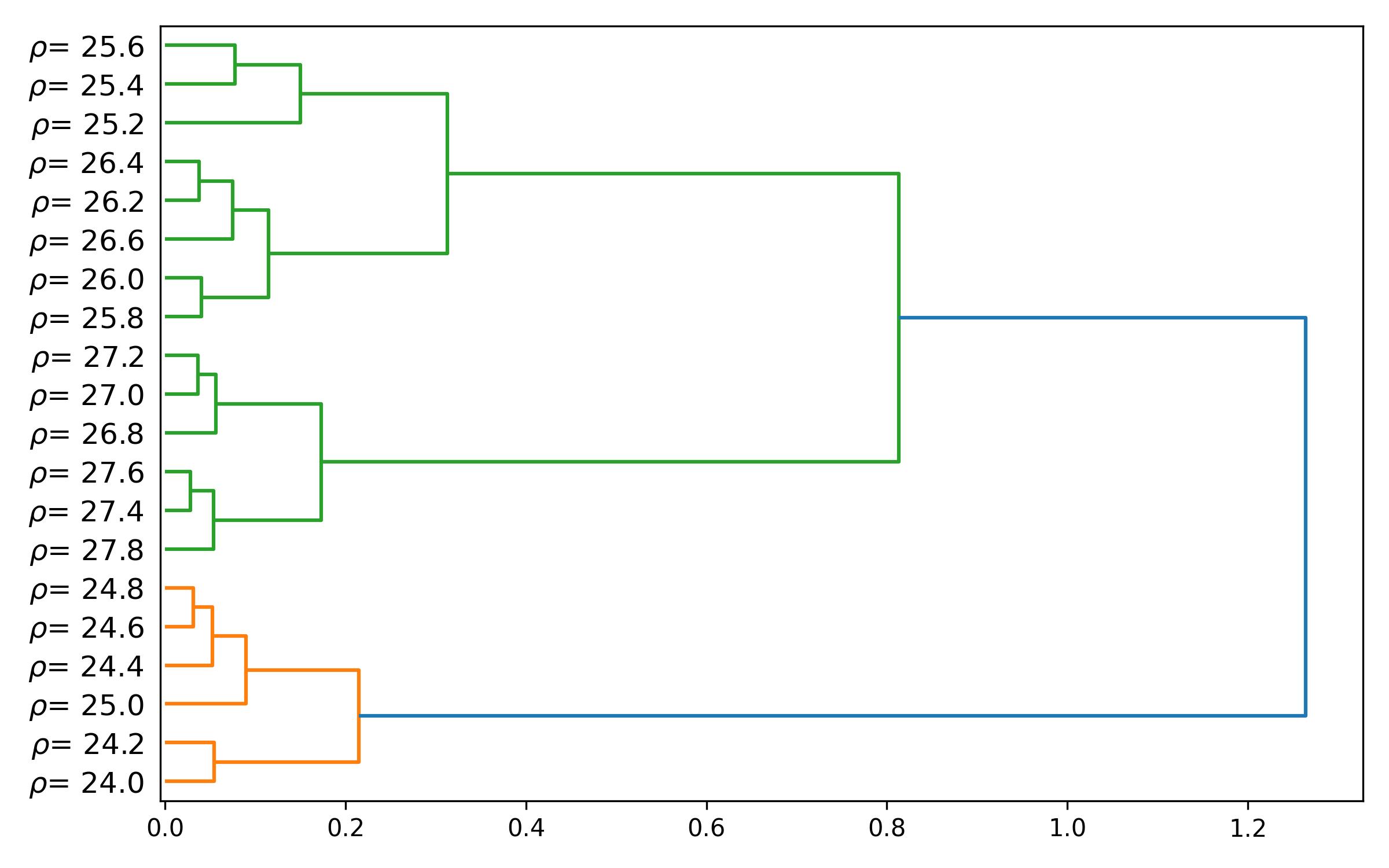}
    \caption{Hierarchical clustering of Lorenz systems obtained from Euler Characteristic Profiles (ECPs) computed for three-dimensional vector fields. The resulting clusters reflect changes in the qualitative behaviour of the system as the control parameter varies}
    \label{fig:dendrogram_lorenz}
\end{figure}

For $\rho$ in the interval $[24.0, 25.0]$, the vector field displayed a simple topological structure with two symmetric attracting regions near the nontrivial equilibria. The ECPs corresponding to these configurations were highly similar and grouped into a single cluster in the dendrogram (Fig. \ref{fig:dendrogram_lorenz}), reflecting a coherent, non-chaotic dynamical regime.

For $\rho > 25.0$, the flow topology underwent a pronounced qualitative change associated with the onset of chaotic oscillations. The ECPs exhibited increased variability and complexity, resulting in a distinct second cluster in the dendrogram. This separation indicates the emergence of the Lorenz attractor, consistent with the classical bifurcation sequence where chaos appears near $\rho \approx 24.74$.

The topological segmentation of the ECP space thus produced two distinct clusters: one representing stable fixed-point dynamics ($24.0 \leq \rho \leq 25.0$) and another corresponding to chaotic behavior ($\rho > 25.0$). This outcome demonstrates the sensitivity of the ECP to structural transitions in the flow.

\section{Numerical Examples: Discrete Dynamical System (The Hénon Map)}
\label{sec:henon}
The Hénon map is a discrete-time dynamical system that exhibits chaotic behavior introduced by Michel Hénon in 1976 \cite{henon1976} as a simplified model of the Poincaré section of the Lorenz system. It is one of the most studied examples of low-dimensional systems that exhibit chaotic behavior while remaining mathematically tractable. The map is defined by the following recurrence relations:
\begin{equation}
\begin{aligned}
x_{n+1} &= 1 - a x_n^2 + y_n, \\
y_{n+1} &= b x_n,
\end{aligned}
\end{equation}
where $a$ and $b$ are real parameters that control the non-linear dynamics of the system. For the classical choice of parameters $a = 1.4$ and $b = 0.3$, the system produces a strange attractor, now commonly referred to as the \textit{Hénon attractor}. This attractor is characterized by its fractal structure, sensitivity to initial conditions and long-term unpredictability, making it a paradigmatic example of deterministic chaos.

The role of the parameters is particularly significant in shaping the system's behavior. The parameter $a$ primarily governs the strength of the quadratic nonlinearity and thus has a direct influence on the folding and stretching mechanisms that generate chaotic dynamics. Smaller values of $a$ typically yield more regular trajectories, while larger values push the system toward strongly chaotic regimes. On the other hand, the parameter $b$ controls the contraction along the $y$-axis; it acts as a damping coefficient. For $|b| < 1$, the system dissipates volume in phase space, which is essential for the existence of an attractor.

In the present analysis, we examined the evolution of the vector displacement field
\[
F(x,y;a,b) = \left( x_{n+1} - x_n,\, y_{n+1} - y_n \right),
\]
which represents the local update from each point $(x,y)$ to its image under the Hénon map. The computations were performed for a fixed $b = 0.3$ and for parameter $a$ varying in the range
$a \in [0.10,1.60]$, with step $0.05$.
The state space was discretized on a uniform grid:
$x \in [-2, 2], \ y \in [-2, 2]$,
with $201$ equidistant samples per dimension, producing a $201\times201$ lattice for each value of $a$.

\begin{figure}[h]
    \centering
    \includegraphics[width=0.7\linewidth]{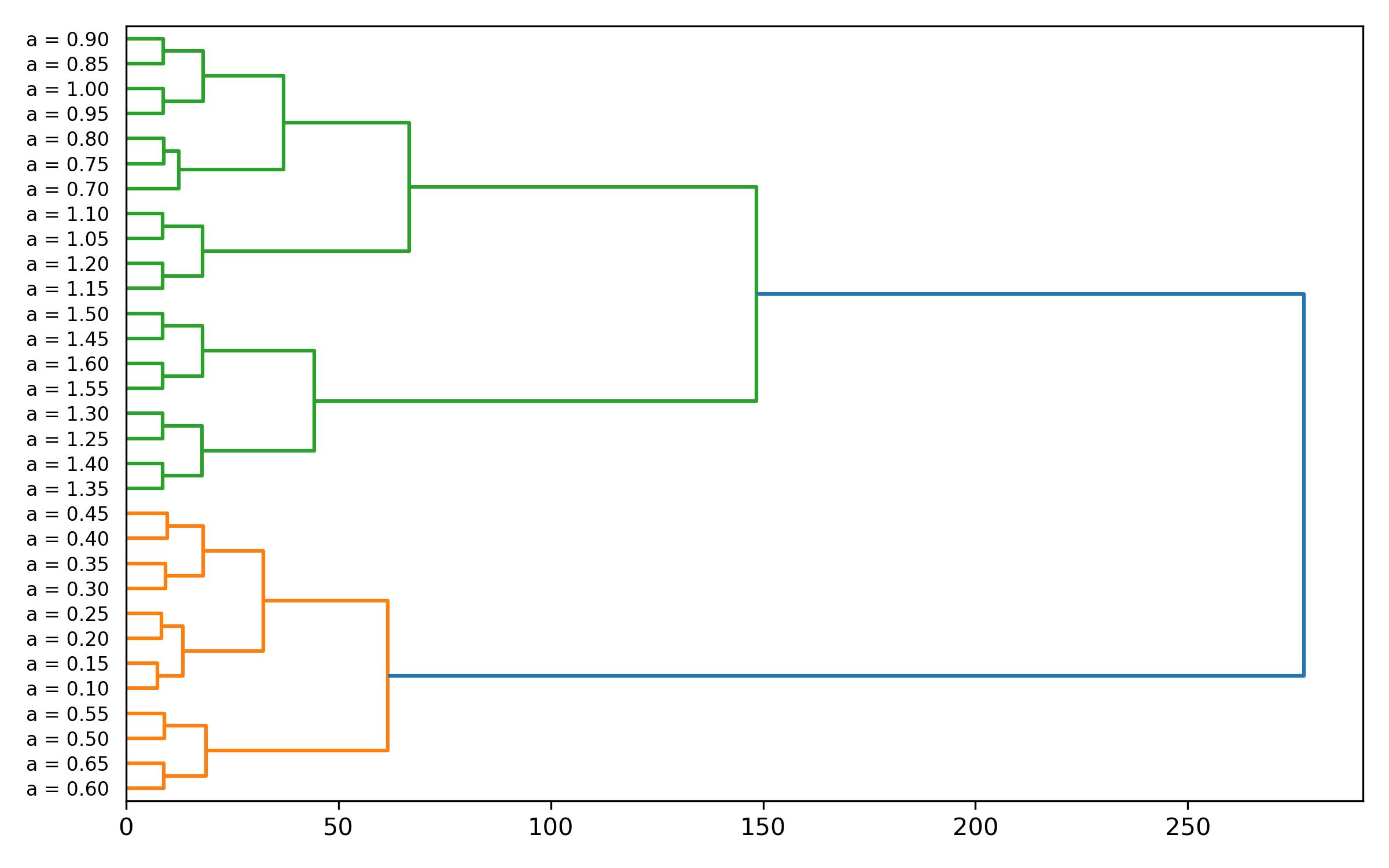}
    \caption{Hierarchical clustering of Hénon map realizations based on Euler Characteristic Profiles (ECPs) computed from the corresponding embedded point clouds. The obtained clustering captures similarities between different dynamical regimes of the discrete-time system}
    \label{fig:dendrogram_henon}
\end{figure}

For $b=0.3$, the Hénon map follows the classical period-doubling route to chaos. The attracting fixed point loses stability at $a=0.3675$, after which a cascade of period-doubling bifurcations generates stable periodic orbits of periods $2$, $4$, $8$, $\ldots$, accumulating at $a_\infty \approx 1.05805$. Beyond this accumulation point, the dynamics are predominantly chaotic.

The ECP dendrogram reveals two dominant topological regimes (Fig.~\ref{fig:dendrogram_henon}). The first cluster, comprising parameter values $a \leq 0.65$,  corresponds to the early stages of the period-doubling scenario so the low-complexity regime of the Hénon map. This interval begins with the stable fixed-point regime observed for small values of $a$ and subsequently encompasses the classical cascade of period-doubling bifurcations, leading to progressively more complex periodic attractors. In general, this cluster corresponds to regular dynamics in which there is a low-period attractor: an attracting fixed point or an attracting periodic orbit with period 2.

The second major cluster, beginning at $a \geq 0.70$, contains parameter values approaching and exceeding the accumulation point of the period-doubling cascade. It is divided into two principal subclusters. The first, covering $a = 0.70$--$1.20$, indicating continued geometric evolution of the attractor while preserving its overall topological character. The second subcluster, spanning $a = 1.25$--$1.60$, corresponds to the fully developed chaotic regime, characterized by increasingly folded and stretched invariant sets. Overall, the obtained hierarchy is consistent with the well-established bifurcation scenario of the Hénon map, in which the loss of stability of the fixed point is followed by a cascade of period-doubling bifurcations leading to chaos, whereas subsequent parameter variation primarily modifies the geometry of the chaotic attractor rather than introducing a single additional qualitative transition.

The clear separation observed in the dendrogram indicates that the ECP effectively captures major changes in the global topology of the displacement fields associated with the Hénon map. Rather than identifying individual bifurcations, the obtained clustering reflects the transition from relatively simple dynamical regimes to geometrically more complex dynamics, including the onset and subsequent development of chaotic attractors. These results demonstrate that the ECP provides a reliable descriptor of structural reorganization in discrete nonlinear systems and is capable of distinguishing parameter regions characterized by qualitatively different phase-space geometries.

\section{Numerical Examples: High-dimensional Turbulent Dynamical Systems}
\label{sec:TDS}
In this study, we analyze a collection of high-dimensional turbulent dynamical systems obtained from the Johns Hopkins Turbulence Databases (JHTDB) \cite{jhtdb,jhtdb2}, which provide publicly accessible, well-documented numerical simulations of canonical flows in fluid dynamics and magnetohydrodynamics. The data were accessed through the JHTDB web services\footnote{\url{https://turbulence.idies.jhu.edu/staging/home}} and constitute benchmark examples of complex, nonlinear, spatio-temporal dynamical systems governed by partial differential equations.

The following four data sets were investigated:
\begin{itemize}
    \item \textbf{Forced Isotropic Turbulence} \footnote{\url{https://doi.org/10.7281/T1KK98XB}} (\texttt{isotropic1024coarse}) — an extended data set representing statistically stationary, homogeneous and isotropic turbulence driven by external forcing \cite{isotropic}.
    \item \textbf{Forced Magnetohydrodynamic (MHD) Turbulence} \footnote{\url{https://doi.org/10.7281/T1930RBS}} (\texttt{mhd1024}) — a three-dimensional turbulent flow coupled to a magnetic field, governed by the incompressible MHD equations \cite{mhd}.
    \item \textbf{Homogeneous Buoyancy-Driven Turbulence} \footnote{\url{https://doi.org/10.7281/T1VX0DPC}} (\texttt{mixing}) — a data set describing turbulent mixing induced by buoyancy effects in a homogeneous configuration \cite{hbdt}.
    \item \textbf{Turbulent Channel Flow} \footnote{\url{https://doi.org/10.7281/T10K26QW}}(\texttt{channel}) — a wall-bounded shear flow representing fully developed turbulence in a planar channel geometry \cite{channel}.
\end{itemize}
For each system, the state of the flow was sampled at 16x16x16 grid at multiple temporal instances $t \in \{0.0,\,0.25,\,0.5,\,0.75,\,1.0\}$, ensuring that the analysis captures temporal variability while remaining within a statistically stationary regime for the forced configurations. At each time instant, the corresponding velocity fields define a point in a high-dimensional state space, yielding a collection of realizations for each underlying dynamical system.

\begin{figure}[h]
    \centering
    \includegraphics[width=0.7\linewidth]{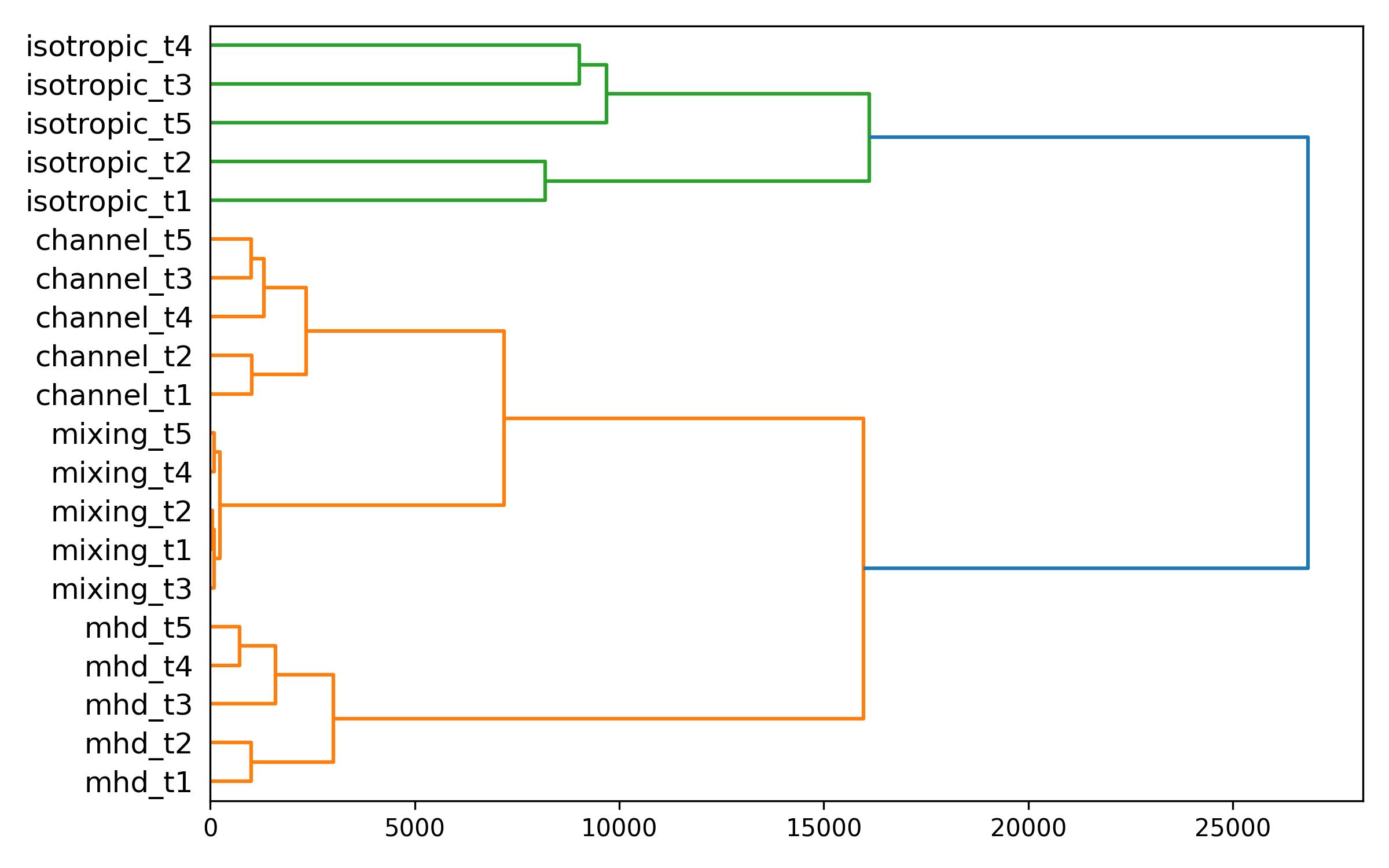}
    \caption{Hierarchical clustering of velocity fields extracted from the Johns Hopkins Turbulence Databases (JHTDB) using the proposed topological descriptors. The dendrogram illustrates similarities and differences between representative turbulent flow configurations, including isotropic turbulence, magnetohydrodynamic turbulence, buoyancy-driven turbulence, and turbulent channel flow}
    \label{fig:dendrogram_realdata}
\end{figure}
The extracted data were subsequently analyzed using the ECP method. The resulting dendrogram, shown in Fig. ~\ref{fig:dendrogram_realdata}, demonstrates a clear and unambiguous hierarchical clustering structure. Each cluster corresponds exactly to one of the four investigated dynamical systems, independent of the sampling time. This result indicates that the ECP captures intrinsic dynamical features that distinguish different classes of turbulent flows, while remaining robust with respect to temporal evolution within each system.

\section{Discussion and Conclusions}
In this work, we introduced a new class of topological descriptors for dynamical systems generated by vector fields, based on ECCs and ECPs. 
By shifting the focus from individual trajectories to global, filtration-based summaries of the geometry of vector fields, the proposed framework provides a qualitative yet computationally tractable perspective on dynamical behavior. 
The resulting descriptors can be constructed both in continuous settings and from finite samples of the vector field or trajectories, making them applicable in both analytical and data-driven contexts.

Through numerical experiments on linear and nonlinear systems, including systems exhibiting bifurcations and chaotic dynamics, we demonstrated that Euler characteristic–based descriptors capture essential qualitative features of the underlying dynamics. In particular, the proposed profiles respond sensitively to changes in dynamical regimes, reflect the presence of invariant structures, and produce signatures that vary smoothly with respect to system parameter. Comparisons with classical tools such as the Conley index and $L_p$-type metrics indicate that ECCs and ECPs offer complementary information while avoiding some of the computational and practical limitations of trajectory-based or index-based methods.

From a methodological perspective, a key advantage of ECP lies in their computational efficiency and scalability. Unlike persistent homology, which often becomes prohibitively expensive for high-dimensional or densely sampled systems, ECCs and ECPs can be computed efficiently on fine discretizations of phase space and naturally extend to multiparameter filtrations. This makes them particularly attractive for applications involving large datasets, parameter continuation, or repeated evaluations across families of vector fields.

In addition to Euler characteristic–based descriptors, we have also investigated alternative constructions, namely the Density of Directions (DoD) and the Beginning–End Point Embedding (BEPE). These approaches provide complementary perspectives on the geometry of vector fields by capturing directional distributions and trajectory-based embeddings, respectively. While DoD emphasizes the global organization of vector directions and BEPE encodes coarse dynamical transport between regions of phase space, both methods share with ECP the advantage of being computable directly from sampled data.

At the same time, the proposed approach has limitations that warrant further investigation. Euler characteristic–based descriptors provide coarse topological summaries and do not encode homological dimension or geometric scale in the same way as persistent homology-based methods. As a consequence, different dynamical systems may occasionally produce similar profiles, especially when their qualitative structures are topologically but not geometrically distinct. Moreover, the construction of filtrations derived from the vector field plays a crucial role in the expressiveness of the resulting descriptors and currently relies on problem-specific insight.

Several directions for future work naturally follow from these observations. Most importantly, a deeper theoretical understanding of what dynamical information is encoded by the proposed characteristics is still needed. In particular, their relationship to standard concepts from dynamical systems theory, such as invariant sets, structural stability, bifurcations, Lyapunov exponents, and dynamical equivalence, should be investigated systematically. From a theoretical standpoint, further analysis of the stability properties of ECP under perturbations of the vector field and discretization schemes would strengthen the mathematical foundations of the framework. On the computational side, adaptive and data-driven strategies for constructing informative filtrations could enhance the sensitivity of the descriptors while preserving their efficiency. Finally, extending the present approach to time-dependent vector fields or real data represents a promising avenue for broadening its applicability.

Overall, the results presented here suggest that ECCs and ECPs provide a practical and theoretically grounded addition to the toolbox of qualitative dynamical systems analysis. The proposed framework offers a scalable and robust approach to the comparison and characterization of vector fields, with potential applications ranging from low-dimensional models to complex, data-driven systems.

\vskip 0.2cm
\noindent \textbf{Data Availability.}  
The source codes used for the computation of selected examples are available in the GitHub repository \cite{github}. 
The individual data used in some of the studies are publicly available through the Johns Hopkins Turbulence Database (JHTDB) online services.

\vskip 0.2cm
\noindent \textbf{Competing Interests.}  
The authors have no relevant financial or non-financial interests to disclose.

\vskip 0.2cm
\noindent \textbf{Acknowledgements.}  
Marta Marszewska and Justyna Signerska were supported by the Dioscuri program initiated by the Max Planck Society, jointly managed with the NCN (National Science Centre, Poland) and mutually funded by the Polish Ministry of Science and Higher Education and the German Federal Ministry of Research, Technology and Space. 
Justyna Signerska was also supported by NCN grant no.~2019/35/D/ST1/02253. 
Pawe{\l} D{\l}otko was supported by the project "Center for Trustworthy Artificial Intelligence for Life Sciences" implemented under the International Research Agendas programme of the Foundation for Polish Science co-financed by the European Union under the European Funds for Smart Economy 2021-2027 (FENG). 
Computations were carried out using the computers of Centre of Informatics Tricity Academic Supercomputer \& Network (Grant ID: PT01147).

\appendix
\section*{Appendix A: Optimized Algorithms for ECP Computations and Visualization}
This appendix presents implementation details and algorithmic optimizations used for efficient computation and visualization of Euler Characteristic Profiles (ECPs). In practical applications involving large datasets or high-resolution vector fields, computational efficiency becomes a critical factor.

In order to understand how contributions propagate through the ECP scheme, it is useful to analyze the structure of the $d$-dimensional Boolean lattice.  Vertices are indexed by binary strings $x \in \{0,1\}^d$, where the number of ones represents the number of incident edges.  Contributions introduced at a given vertex propagate downwards to all of its coordinate-wise subsets.  This naturally leads to over-counting, which must be corrected by subtracting the appropriate multiplicities.

\paragraph{The 3D case.}
The vertices of the $3$-cube may be arranged according to their
Hamming weight:
\begin{verbatim}
111
3 edges here
110  101  011
2 edges here
100  010  001
1 edge here
000
\end{verbatim}
If contributions are introduced at vertices \texttt{110}, \texttt{101} and \texttt{011}, then each of the weight-1 vertices (\texttt{100}, \texttt{010}, \texttt{001}) receives two inherited contributions. Therefore, one must subtract the excess once so that their net contribution is counted only once. Next, these three weight-1 vertices each introduce a contribution at \texttt{000}, so the bottom vertex is counted three times and must therefore be corrected by subtracting the inherited contribution twice.

\paragraph{The 4D case.}
A similar pattern becomes more visible in four dimensions:
\begin{verbatim}
1111
4 edges here
1110  1101  1011  0111
3 edges here
1100 1010 1001 0110 0101 0011
2 edges here
1000 0100 0010 0001
1 edge here
0000
\end{verbatim}
Introducing contributions at the weight-3 vertices (\texttt{1110}, \texttt{1101}, \texttt{1011}, \texttt{0111}) causes each weight-2 vertex to inherit three contributions, although only one should remain. Thus, two copies must be subtracted. Each weight-1 vertex then receives three inherited contributions from the weight-2 level, so again two contributions must be removed. Finally, the vertex \texttt{0000} inherits four contributions (one from each weight-1 vertex) and therefore three of them must be subtracted.

\paragraph{Emerging pattern and the 5D case.}
To observe the systematic structure of these multiplicities one must look at the $5$-dimensional cube as well. In general, a vertex of Hamming weight $s$ inherits contributions from all vertices of weight $t > s$. The number of such supersets is exactly
\[
\binom{d-s}{t-s},
\]
which explains why, for example, weight-1 vertices in $4$D inherit $\binom{3}{2}=3$ contributions from weight-2 vertices and why the bottom vertex inherits $\binom{4}{1}=4$ contributions from weight-1.

\paragraph{General combinatorial rule.}
Let $A(x)$ denote the accumulated contribution at vertex $x$ and
let $C(x)$ denote the original contribution introduced at $x$.
Contribution propagation satisfies
\begin{equation}
A(x)=\sum_{y \succeq x} C(y),
\end{equation}
where $y \succeq x$ means that $y$ contains all $1$-positions of $x$. Recovering the original (non-duplicated) contributions requires inversion on the Boolean lattice:
\begin{equation}
\label{eq:correct}
C(x)=\sum_{y\succeq x} (-1)^{|y|-|x|}\, A(y).
\end{equation}
This formula exactly reproduces all correction factors observed in the 3D, 4D and 5D cases above.

A practical implementation of this can be obtained via the Let's try to implement this solution in practice. Vertices are indexed by integers $m \in \{0,\ldots,2^d-1\}$, whose binary expansions correspond to the coordinates of the hypercube.

\begin{verbatim}
Inputs:
  d - dimension (positive integer)
  A[0..2^d-1] - array of accumulated values, indexed by integer masks

Output:
  C[0..2^d-1] - array of original contributions (after correction)

Algorithm:
  1. Copy accumulated values into working array F:
       for m = 0 to 2^d - 1:
           F[m] := A[m]

  2. For each bit i from 0 to d-1 perform elimination:
       for i = 0 to d - 1:
           for m = 0 to 2^d - 1:
               if ((m >> i) & 1) == 0 then
                   F[m] := F[m] - F[m | (1 << i)]
               end if
           end for
       end for

  3. Now F contains the original contributions:
       for m = 0 to 2^d - 1:
           C[m] := F[m]
       end for
\end{verbatim}

This procedure eliminates inherited contributions bit by bit, beginning with the highest Hamming-weight vertices and terminating with the true local contributions $C$.  The algorithm is exact and dimension-agnostic, and therefore suitable for ECP analysis and visualization across arbitrary dimensions.

\section*{Appendix B: Efficient Computation of the ECP Representation on a Regular Grid}

In this section we provide a linear algorithm for ECP computation on a given grid of points. We begin by introducing the representation of the ECP. As in the following section, we express it as:
\[
ECP = \{ ((x_1^1, \ldots, x_n^1), val^1), ((x_1^2, \ldots, x_n^2), val^2), \ldots, ((x_1^l, \ldots, x_n^l), val^l) \},
\]

where each $((x_1^k, \ldots, x_n^k), val^k)$ represents a sample point in the $n$-dimensional space together with its associated value.

Next, we define a regular grid of points:
\[
G = (x_1^{\min} + i_1\, \Delta x_1,\; x_2^{\min} + i_2\, \Delta x_2,\; \ldots,\; x_n^{\min} + i_n\, \Delta x_n),
\]

for indices $i_1, \ldots, i_n$ varying from $1$ to $max_i$. Our goal is to compute the ECP value at every grid point.

A naive approach would, for each grid point, determine which of the ECP sample points
\[
\{(x_1^1, \ldots, x_n^1), (x_1^2, \ldots, x_n^2), \ldots, (x_1^l, \ldots, x_n^l)\}
\]
are dominated by $(x_1^{\min} + i_1\, \Delta x_1,\, x_2^{\min} + i_2\, \Delta x_2,\, \ldots,\, x_n^{\min} + i_n\, \Delta x_n)$ and then sum their corresponding contributions. However, this naive computation has a complexity of
\[
O(max_1 \cdot max_2 \cdot \ldots \cdot max_n \cdot l),
\]
where $l$ is the number of points in the ECP representation. Fortunately, a more efficient procedure exists, achieving complexity proportional to $O(max_1 \cdot max_2 \cdot \ldots \cdot max_n)$. The algorithm proceeds as follows:

\begin{itemize}
    \item \textbf{Initialization.}  
    For each ECP sample point $(x_1^k, \ldots, x_n^k)$, we identify the nearest grid point.  
    Each grid node stores a pair, initially set to $(0, 0)$.  
    The first component represents the final ECP value (to be computed at the end of the algorithm), while the second stores the cumulative contribution from the nearest ECP points.  
    The ECP points are thus rounded to the grid and their corresponding $val^k$ values are assigned to the second component of the appropriate grid pairs.  
    This step has a computational complexity of $O(l)$, proportional to the number of ECP points. 
    
    \item \textbf{Accumulation over the grid.}  
    We traverse the grid in lexicographic order (or equivalently, using an $n$-dimensional binary counter).  
    If the second component of a grid point is nonzero, we update the first component accordingly.  
    Subsequently, we propagate this modification to the succeeding grid points in the traversal order.  
    This step has complexity proportional to the grid size multiplied by the grid dimensionality (which is typically treated as constant).
\end{itemize}

This approach significantly reduces computational cost by exploiting structured traversal and local accumulation, yielding a total complexity linear in the number of grid points.
\end{document}